\theoremstyle{plain}
\newtheorem{thm}{\bf Theorem}[section]
\newtheorem{prop}[thm]{\bf Proposition}
\newtheorem{lem}[thm]{\bf Lemma}
\newtheorem{conj}[thm]{\bf Conjecture}
\newtheorem{cor}[thm]{\bf Corollary}
\theoremstyle{definition}
\newtheorem{defn}[thm]{\bf Definition}
\newtheorem{rem}[thm]{\bf Remark}
\newtheorem{ex}[thm]{\bf Example}
\newtheorem{defn-prop}[thm]{\bf Definition-Proposition}
\newtheorem{question}[thm]{\bf Question}
\DeclareMathOperator{\Hom}{Hom}
\DeclareMathOperator{\Min}{Min}
\DeclareMathOperator{\Ann}{Ann}
\DeclareMathOperator{\Supp}{Supp}
\DeclareMathOperator{\im}{Im}
\DeclareMathOperator{\cL}{\mathcal{L}}
\DeclareMathOperator{\cS}{\mathcal{S}}
\newcommand{\ro}{\hskip .6 pt\rule[4.6 pt]{3.5 pt}{3.5 pt}}
\newcommand{\rob}[1]{^{\boxempty^{#1}}}
\newcommand{\roi}{^{\boxempty}}
\newcommand{\red}{_{\text{red}}}
\DeclareMathOperator{\Q}{Q}
\DeclareMathOperator{\hgt}{ht}
\DeclareMathOperator{\supht}{supht}
\DeclareMathOperator{\rk}{rank}
\DeclareMathOperator{\mdim}{mdim}
\DeclareMathOperator{\embdim}{embdim}
\DeclareMathOperator{\fa}{\mathfrak{a}}
\DeclareMathOperator{\fb}{\mathfrak{b}}
\DeclareMathOperator{\fh}{\mathfrak{h}}
\DeclareMathOperator{\fm}{\mathfrak{m}}
\DeclareMathOperator{\fM}{\mathfrak{M}}
\DeclareMathOperator{\fN}{\mathfrak{N}}
\DeclareMathOperator{\fp}{\mathfrak{p}}
\DeclareMathOperator{\fP}{\mathfrak{P}}
\DeclareMathOperator{\fq}{\mathfrak{q}}
\DeclareMathOperator{\ux}{\underline{x}}
\DeclareMathOperator{\MaxSpec}{MaxSpec}
\DeclareMathOperator{\lm}{lim}
\def\vect#1#2{{#1}_1, \, \ldots, \, {#1}_{#2}}
\def\inc{\subseteq}
\begin{document}

\title{Content of Local Cohomology, Parameter Ideals, and Robust Algebras}
\author{Melvin Hochster and Wenliang Zhang}
\address{Department of Mathematics, University of Michigan, Ann Arbor, MI 48109, USA}
\email{hochster@umich.edu}
\address{Department of Mathematics, Statistics and Computer Science, University of Illinois, Chicago, IL 60607, USA}
\email{wlzhang@uic.edu}
\thanks{M.H. is partially supported by the National Science Foundation through grant DMS\#1401384 and W.Z. is partially supported by National Science Foundation through grant DMS \#1606414}
\numberwithin{equation}{thm}
\begin{abstract} This paper continues the investigation of quasilength, of content of local cohomology with
respect to generators of the support ideal,
and of robust algebras begun in joint work of Hochster and Huneke.
We settle several questions raised by Hochster and Huneke. 
In particular, we give a family of examples of top local cohomology modules both in equal characteristic
0 and in positive prime characteristic that are nonzero but have content 0.  We use the notion of
a robust forcing algebra (the condition turns out to be strictly stronger than the notion of a solid forcing algebra in,
for example, equal characteristic 0) 
to define a new closure operation on ideals.  We prove that this new notion of closure
coincides with tight closure for ideals in complete local domains of positive characteristic, which
requires proving that forcing algebras for instances of tight closure are robust, and study several related problems.
This gives, in effect, a new characterization of tight closure in complete local domains of positive characteristic. As a byproduct, we also answer a question of Lyubeznik in the negative.
\end{abstract}   

\maketitle
\pagestyle{myheadings}
\markboth{\scriptsize{\textsc{MELVIN HOCHSTER AND WENLIANG ZHANG}}}
{\scriptsize{\textsc{CONTENT OF LOCAL COHOMOLOGY, PARAMETER IDEALS, AND ROBUST ALGEBRAS}}}

\section{Introduction}\label{intro}   

This paper continues the study of the notions of quasilength, content,
and robustness initiated in  \cite{quasi-length} and resolves questions raised in that
paper.  We use these ideas to give a new characterizaton of tight closure in equal
characteristic $p>0$.  The condition we use is similar to the characterization of tight
closure in \cite{solid-closure} using solid closure, but the condition we impose on the forcing
algebras that arise is {\it a priori} stronger than being solid.  
The stronger condition, which is that the
forcing algebra be {\it robust} (defined briefly two paragraphs below, and with more
detail in  Definition~\ref{defn-robust-algebra}), nonetheless gives the usual notion of tight 
closure for ideals in complete local domains. 

There is great interest in extending the notion of tight closure to rings of mixed
characteristic.  An example of Paul Roberts \cite{Roberts-Example} shows that solid closure
is not the right notion, in that, even in a regular local ring of dimension 3 in equal
characteristic 0, it is not true that every ideal is solidly closed.  Roberts proves
this by showing that a certain forcing algebra is solid, when one hopes it should
not be. Specifically, let $k$ be a field of characteristic 0, let $A$ be the formal power series
ring $K[[x_1,\, x_2,\, x_3]]$, and let $R = A[z_1,\, z_2, \, z_3]/(g)$
where $g = x_1^2x_2^2x_3^2 - \sum_{i=1}^3 z_ix_i^3$.  $R$ is a generic forcing algebra over
for the element $x_1^2x_2^2x_3^2$  and the ideal $(x_1^3,\,x_2^3,\,x_3^3)A$. 
Roberts shows that $H^3_{(x_1,x_2,x_3)}(R) \not=0$, so that $R$ is a solid $A$-algebra:  see \S\ref{robcl} and
Definition~\ref{defsolcl} in particular. 
However, this algebra is {\it not} robust ({\it cf.} \cite[Example 3.11]{quasi-length} and 
Definition \ref{defn-robust-algebra}).  It remains an intriguing open question to determine the content $c$
of $H^3_{(x_1,x_2,x_3)}(R)$ with respect to $x_1, \, x_2, \,x_3$.  The fact that $g$ vanishes in $R$
implies at once that $c \leq 26/27$, but so far as the authors know, $c$ might be 0.

In \S\ref{prel} we review the needed background concerning quasilength, content,
and robust algebras.  In \S\ref{par} we study quasilength and content for modules
over a local ring with respect to a  system of parameters.  We give some surprising
examples of the failure of additivity on direct sums.  However, over an equicharacteristic local domain of Krull dimension
$d$,  it is true that the content of the $d\,$th local cohomology module of  a module of torsion-free rank $r$
with respect to a system of parameters is $r$. Theorem~\ref{main-thm} gives a more general statement.

A sequence of elements $\vect x d$ generating a proper ideal of a ring $S$
is called a Q-{\it sequence} if for every positive integer $t$, any finite filtration of 
$S/(x_1^t, \, \ldots, \, x_d^t)$ with factors
that are quotients of $S/(\vect x d)S$ requires at least $d^t$ factors.
An algebra $S$ over a local ring
$(R,\, m, \,K)$ of Krull dimension $d$ to be {\it robust} if every system of 
parameters $\vect x d$ of $R$ is Q-sequence in $S$. 
That $S$ be robust implies that $H^d_m(S) \not= 0$.  The condition that 
$H^d_m(S) \not= 0$ is equivalent
to the condition that $S$ be solid if $R$ is complete ({\it cf.} \cite[Corollary 2.4]{solid-closure}).

In \S\ref{notrob} we show that there are non-zero local cohomology modules whose content is 0 in all 
characteristics, at the same time answering a question raised in 
\cite[Example 3.14]{quasi-length}.  Specifically,  let $S = A[x,y,u,v]$ be a polynomial 
ring in 4 variables over a Noetherian commutative ring $A$, and let $R = A[xu, yv, xv+yu]$. Then $H^3_{(xu,yv,xv+yu)}(S)\neq 0$ and hence $S$ is a solid $R$-algebra.  In  \cite{quasi-length},  it was asked 
whether $xu, \, yv,\, xv+yu$ is a Q-sequence.  It is shown here that the content with respect to $xu, \, yv,\, xv+yu$
 is 0 in all characteristics; if $xu, \, yv,\, xv+yu$ were a Q-sequence, the content would be 1. 

\S\ref{robcl} gives the definition of robust closure, and in \S\ref{charp} we prove that it agrees
with tight closure in complete local domains of characteristic $p >0$. This is equivalent to the following statement,
which is one of our main results.

\begin{thm}[Theorem \ref{solid-implies-content-1}]
Let $R$ be a local ring of prime characteristic $p >0$.  Suppose that  
$I = (\vect f h)$ is an ideal of $R$,  that $g \in R$, that 
$g \in I^*$,  the tight closure of $I$,  and  that $$S = R[\vect Z h]/(g - \sum_{i=1}^h f_iZ_i)$$ 
(which is a generic forcing algebra for $(R,\, (f_1,\, \ldots,\, f_h),\,g)$: see Definition~\ref{defforce}).  Then  $S$ is a robust $R$-algebra. 
\end{thm}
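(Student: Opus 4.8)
The plan is to reduce to the case of a complete local domain and then establish, for every system of parameters and every $t$, the quasilength lower bound that robustness requires, feeding into the machinery of \S\ref{par} the Frobenius powers of the tight closure relation. First I would pass to the completion: this changes neither the systems of parameters of $R$, nor the modules $S/(x_1^t,\dots,x_d^t)S$ and their finite filtrations by homomorphic images of $S/(x_1,\dots,x_d)S$, and $g$ stays in the tight closure of $I$ (choose a test element $c$ in $\widehat R{}^{\circ}$). Next I would reduce to the case that $R$ is a domain by killing a minimal prime $\fp$ of $\widehat R$ with $c\notin\fp$: the relations $cg^{q}\in I^{[q]}$ descend to $\widehat R/\fp$ with nonzero image of $c$, so the image of $g$ lies in the tight closure of the image of $I$ there. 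The subtlety is that a system of parameters of $\widehat R$ maps to a system of parameters of $\widehat R/\fp$ only when $\dim \widehat R/\fp=\dim\widehat R$, so one first reduces to the equidimensional case, or invokes a lemma from \S\ref{prel} reducing robustness of $S$ over $R$ to robustness over the domains $\widehat R/\fp$ with $\fp$ of maximal dimension not containing $c$.

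Now assume $R$ is a complete local domain of dimension $d$, fix a system of parameters $x_1,\dots,x_d$, fix $t\ge1$, and suppose $S/(x_1^t,\dots,x_d^t)S$ admits a finite filtration whose factors are homomorphic images of $S/(x_1,\dots,x_d)S$; I must show it has at least $t^{d}$ factors (if it admits no finite filtration there is nothing to prove). The leverage is to use, simultaneously for all $q=p^{e}$, the identity $g^{q}=\sum_{i}f_i^{q}Z_i^{q}$, valid in $S$ because $\ch R=p$, together with a relation $cg^{q}=\sum_{i}r_i^{(q)}f_i^{q}$ in $R$ witnessing $g\in I^{*}$. For each $q$ let $T_q$ be the generic forcing algebra for $(R,(f_1^q,\dots,f_h^q),cg^{q})$; then $T_q$ retracts onto $R$ via $Z_i\mapsto r_i^{(q)}$, and there is an $R$-algebra map $T_q\to S$ given by $Z_i\mapsto cZ_i^{q}$, since on the defining relation of $T_q$ it takes the value $cg^{q}-\sum_i f_i^{q}(cZ_i^{q})=cg^{q}-cg^{q}=0$ in $S$. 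Tensoring the retraction $T_q\twoheadrightarrow R$ with $S$ over $T_q$ exhibits $S/\bigl(cZ_1^{q}-r_1^{(q)},\dots,cZ_h^{q}-r_h^{(q)}\bigr)S$ as a quotient of $S$, and I would analyze this family of quotients, as $q\to\infty$, to produce a surjection of $S/(x_1^t,\dots,x_d^t)S$ onto a module whose quasilength with respect to $S/(x_1,\dots,x_d)S$ is at least $t^{d}$; since a surjection never increases quasilength, that finishes the proof. The lower bound on the quasilength of the target would come from Theorem~\ref{main-thm}, applied to $R$ itself as a torsion-free module of rank one over the $d$-dimensional domain $R$, in the sharp form $\operatorname{qlength}_{R/(x_1,\dots,x_d)R}\bigl(R/(x_1^t,\dots,x_d^t)R\bigr)\ge t^{d}$ for all $t$.

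The principal difficulty is that $S$ is not module-finite over $R$, so the finite-module results of \S\ref{par} do not apply to $S$ directly; the estimate must be routed through finitely generated approximations — either the subalgebras of $S$ generated by the monomials of bounded degree in the $Z_i$, or the family $T_q$ as $q\to\infty$ — with care that the resulting bound is uniform in $t$. Uniformity is the whole point: the weaker conclusion that $S$ is solid follows from a single forcing algebra, whereas robustness forbids precisely the collapsing of filtration factors that drives the content to $0$ in the examples of \S\ref{notrob} and in Roberts' example, and what rules out that collapsing is the presence of the relations $cg^{q}\in I^{[q]}$ for all $q$ rather than for one. I expect the technical heart of the proof — and the place where the hypothesis $\ch R=p$ is used essentially, through the identity $g^{q}=\sum_i f_i^{q}Z_i^{q}$ and through $q$-th roots — to be the verification that this infinite family of relations forces every finite filtration of $S/(x_1^t,\dots,x_d^t)S$ by images of $S/(x_1,\dots,x_d)S$ to have at least $t^{d}$ factors.
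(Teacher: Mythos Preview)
Your outline has the right instinct --- transfer the problem from $S$ back to $R$ using the tight closure witnesses $cg^q\in I^{[q]}$ --- but the transfer you actually write down goes the wrong way, and the step you label ``analyze this family of quotients'' is precisely the whole proof.  The maps you build are $T_q\to S$ (via $Z_i\mapsto cZ_i^q$) and $T_q\to R$ (the retraction).  Tensoring gives a surjection $S\twoheadrightarrow S\otimes_{T_q}R = S/(cZ_1^q-r_1^{(q)},\ldots,cZ_h^q-r_h^{(q)})$, but this target is still a polynomial-like ring over $R$: the $Z_i$ remain free in degrees below $q$, and $c$ is not a unit, so there is no surjection from it onto $R/(x_1^t,\ldots,x_d^t)R$ or onto anything whose $(x_1,\ldots,x_d)$-quasilength you can bound below by $t^d$.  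Your appeal to Theorem~\ref{main-thm} ``in the sharp form $\cL_{(x)}(R/(x^t))\ge t^d$'' is also off: that theorem computes the asymptotic content $\fh^d_{\underline{x}}$, not quasilength at a fixed level, and in any case you have produced no map to which to apply it.

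What the paper does is the mirror image of your construction: rather than maps \emph{into} $S$, it builds a map \emph{out of} $S$, namely $\varphi:R[Z_1,\ldots,Z_h]\to R^{1/q}[c^{-1/q}]$ sending $Z_l\mapsto c^{-1/q}r_l^{1/q}$, which kills the forcing relation because $c^{1/q}g=\sum f_l r_l^{1/q}$ in $R^{1/q}$.  One then argues by contradiction: assume a filtration of $S/(x_1^{q_0},\ldots,x_d^{q_0})$ with $h<q_0^d$ factors, given by elements $s_1,\ldots,s_h$ and explicit equations $x_js_i=\sum_{k<i}\alpha_{ijk}s_k+\sum_k\beta_{ijk}x_k^{q_0}+\gamma_{ij}\cdot(\text{forcing relation})$.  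Push these equations through $\varphi$, clear denominators by a suitable power $c^{N/q}$, and interleave the resulting elements $\sigma_i=c^{iN/q}\varphi(s_i)$ with their $c^{hN/q}$-multiples to get a filtration of $R^{1/q}/(x_1^{q_0},\ldots,x_d^{q_0})R^{1/q}$ with $2h+1$ cyclic factors, each killed either by $(x_1,\ldots,x_d)$ or by $(c^{hN/q},x_1^{q_0},\ldots,x_d^{q_0})$.  Comparing \emph{lengths} (not quasilengths), dividing by $(qq_0)^d$, and letting $q\to\infty$ with $q_0$ and $h$ fixed yields $e_{\underline{x}}(R)\le (h/q_0^d)e_{\underline{x}}(R)$, since $\dim R/(c)<d$ kills the correction term.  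This forces $h\ge q_0^d$.  The key objects --- $R^{1/q}$, the denominator-clearing bookkeeping, and the multiplicity $e_{\underline{x}}(R)$ --- do not appear in your sketch, and they are exactly where the work lies.
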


\S\ref{Qsup} discusses the relationship between being a Q-sequence and superheight. 
In  a Noetherian ring $R$, whether $\vect x d$ is Q-sequence is related to whether
the height of $(\vect xd)S$ becomes $d$ in some Noetherian $R$-algebra $S$.
In equal characteristic, the latter condition is sufficient for $\vect x d$ to be a
Q-sequence.  We show, however, that it is not necessary. The examples are subtle,
and the proofs depend on difficult theorems. Our examples also answer negatively a question of 
Gennady Lyubeznik (page 144 in \cite{Lyubeznik-survey}) on the vanishing of local cohomology modules, {\it cf.} Proposition \ref{prop: forcing-algebra-cubic}.  


\S\ref{qu} describes some conjectures and questions that are related to the results
of this paper.

\section{Preliminaries}\label{prel}  
In this section we collect some basic facts about content of local cohomology from \cite{quasi-length}. 
We begin with the definition of {\it quasilength}.

\begin{defn}[Quasilength]
Let $R$ be a commutative ring (not necessarily Noetherian), $I$ a finitely generated ideal of 
$R$, and $M$ an $R$-module.\par 
$M$ is defined to have {\it finite $I$-quasilength} if there is a finite filtration of $M$ in which the 
factors are cyclic modules killed by $I$.\par
The $I$-{\it quasilength} of $M$, denoted by $\cL_I(M)$, is defined to be the minimum number of factors in such a filtration. If $M$ does not have finite $I$-quasilength, then its $I$-quasilength is defined to be $\infty$.
\end{defn}

One can check that $M$ has finite $I$-quasilength if and only if $M$ is finitely generated and 
is killed by a power of $I$, and $\cL_I(M)$ is bounded below by the least number of generators of $M$.
({\it cf.} Propostion 1.1(a) of \cite{quasi-length}.) 

Assume $R$ is a Noetherian commutative ring and $M$ a finitely generated $R$-module. Let $x_1,\dots,x_d$ be elements of $R$ and $I=(x_1,\dots,x_d)$. We will use $\underline{t}$ to denote the $d$-tuple of positive integers $(t_1,\dots,t_d)$,  
$\underline{t}+k$ to denote the $d$-tuple $(t_1+k,\dots,t_d+k)$ and $I_{\underline{t}}$ to denote 
$(x^{t_1}_1,\dots,x^{t_d}_d)$. One can define
\[(I_{\underline{t}}M)^{\lm}=\bigcup^{\infty}_{k=0}((I_{\underline{t}+k}M):_M(x_1\cdots x_d)^k).\]
If we set $\underline{k}=(k_1,\dots,k_d)\in \mathbb{N}^d$, then one also has that
\[(I_{\underline{t}}M)^{\lm}=\bigcup_{\underline{k}\in\mathbb{N}^d}((I_{\underline{t}+k}M):_M(x^{k_1}_1\cdots x^{k_d}_d)).\] 
As we will see $(I_{\underline{t}}M)^{\lm}$ is closely related to the local cohomology module $H^d_I(M)$. It is well-known that $H^d_I(M)=\varinjlim_{\underline{t}}M/I_{\underline{t}M}$ where in the direct system the map $M/I_{\underline{t}M}\to M/I_{\underline{t}+\underline{k}}M$ is induced by multiplication by $x^{k_1}_1\cdots x^{k_d}_d$ on the numerators. It follows 
that $(I_{\underline{t}}M)^{\lm}$ is the kernel of the canonical map $M\to M/I_{\underline{t}}M\to H^d_I(M)$. Therefore
\[H^d_I(M)=\varinjlim_{\underline{t}}M/(I_{\underline{t}}M)^{\lm}\]
in which all maps in the direct system are injective and each $M/(I_{\underline{t}}M)^{\lm}$ in this direct system 
can be viewed as a submodule of $H^d_I(M)$.\par
Now we are in position to introduce the $\underline{\fh}$-{\it content} of $H^d_I(M)$ {\it with respect to} 
$\underline{x}$ (\cite[page 9]{quasi-length}). 

\begin{defn} 
With $\underline{t}\geq s$ for $s\in \mathbb{N}$ meaning that every $t_j\geq s$, we define\[\underline{\fh}^d_{\underline{x}}(M)=\lim_{s\to \infty}\inf\{\frac{\cL_I(M/(I_{\underline{t}}M)^{\lm})}{t_1\cdots t_d}|\underline{t}\geq s\}.\]
\end{defn}
The existence of $\underline{\fh}^d_{\underline{x}}(M)$ is guaranteed by \cite[Proposition 2.1]{quasi-length}.\par
\[\fh^d_{\underline{x}}(M)=\lim_{s\to \infty}\inf\{\frac{\cL_I(M/(I_{\underline{t}}M))}{t_1\cdots t_d}|\underline{t}\geq s\}.\]
If $\fh^d_{\underline{x}}(R)=1$, then according to \cite[Theorem 3.8]{quasi-length} one also has $\underline{\fh}^d_{\underline{x}}(R)=1$; in this case $\underline{x}$ is called a $\Q$-{\it sequence} and $H^d_I(R)$ is called {\it robust} for $\underline{x}$.\par
By \cite[Theorem 5.1]{quasi-length}, if $R$ is a local ring and $x_1,\dots,x_d$ is a system of parameters, then one has $\underline{\fh}^d_{\underline{x}}(M)=\fh^d_{\underline{x}}(M)$. This is the reason we will mainly 
work with $\fh^d_{\underline{x}}(M)$ in the next section where we are only concerned with parameters.

\section{Content of local cohomology with respect to parameters}\label{par} 
\begin{prop}
\label{content-ineq} Let $x_1,\dots,x_d$ be elements of a
commutative ring $R$ and let $0\to L\to M\to N\to 0$ be a short
exact sequence of finite $R$-modules. Then
$$\fh^d_{\underline{x}}(M)\leq \fh^d_{\underline{x}}(L)+\fh^d_{\underline{x}}(N).$$
\end{prop}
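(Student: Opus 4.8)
The plan is to compare, term by term, the quasilengths entering the three limits $\fh^d_{\underline{x}}(M)$, $\fh^d_{\underline{x}}(L)$, $\fh^d_{\underline{x}}(N)$, and only then pass to the limit. Fix a $d$-tuple $\underline{t}$ and tensor $0\to L\to M\to N\to 0$ with $R/I_{\underline{t}}$. Since $-\otimes_R R/I_{\underline{t}}$ is right exact, this produces an exact sequence
\[ L/I_{\underline{t}}L \xrightarrow{\ \varphi\ } M/I_{\underline{t}}M \longrightarrow N/I_{\underline{t}}N \longrightarrow 0 . \]
Hence $M/I_{\underline{t}}M$ contains the submodule $L' := \im\varphi$, which is a homomorphic image of $L/I_{\underline{t}}L$, and $(M/I_{\underline{t}}M)/L' \cong N/I_{\underline{t}}N$. (Each of these modules is finitely generated and killed by a power of $I$, since $I^{\,t_1+\cdots+t_d-d+1}\subseteq I_{\underline{t}}$, so all the relevant quasilengths are finite.)

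Next I would record two elementary facts about $\cL_I$. First, it is monotone under surjections: a finite filtration of a module with cyclic, $I$-killed factors maps onto such a filtration of any quotient once one discards repetitions, so $\cL_I$ can only drop. Second, it is subadditive on short exact sequences: splicing a filtration of the submodule with the pullback of a filtration of the quotient exhibits a filtration of the middle term with the expected number of factors. Both are immediate from the definition (compare \cite[Proposition 1.1]{quasi-length}). Applying them to $0\to L'\to M/I_{\underline{t}}M\to N/I_{\underline{t}}N\to 0$ gives
\[ \cL_I\!\left(M/I_{\underline{t}}M\right) \ \le\ \cL_I(L') + \cL_I\!\left(N/I_{\underline{t}}N\right) \ \le\ \cL_I\!\left(L/I_{\underline{t}}L\right) + \cL_I\!\left(N/I_{\underline{t}}N\right), \]
and dividing by $t_1\cdots t_d$ yields the \emph{termwise} inequality that drives the argument.

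Finally I would pass to the limit. The one point that requires care is that an inequality $a_{\underline{t}}\le b_{\underline{t}}+c_{\underline{t}}$ need not survive the operation $\lim_{s}\inf_{\underline{t}\ge s}$, because the infima of $b$ and of $c$ over a common tail need not be attained at the same $\underline{t}$. This is exactly where the existence statement \cite[Proposition 2.1]{quasi-length} is used: it allows $\fh^d_{\underline{x}}(L)$, $\fh^d_{\underline{x}}(M)$, $\fh^d_{\underline{x}}(N)$ to be computed as \emph{genuine limits} over $\underline{t}\to\infty$, so that the two "error terms" can be made small simultaneously. Concretely, given $\varepsilon>0$, pick $s$ so large that for all $\underline{t}\ge s$ one has both $\cL_I(L/I_{\underline{t}}L)/(t_1\cdots t_d) < \fh^d_{\underline{x}}(L)+\varepsilon/2$ and $\cL_I(N/I_{\underline{t}}N)/(t_1\cdots t_d) < \fh^d_{\underline{x}}(N)+\varepsilon/2$; the termwise inequality then forces $\inf\{\cL_I(M/I_{\underline{t}}M)/(t_1\cdots t_d):\underline{t}\ge s\}\le \fh^d_{\underline{x}}(L)+\fh^d_{\underline{x}}(N)+\varepsilon$, and letting $s\to\infty$ and then $\varepsilon\to 0$ gives the result. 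I expect this bookkeeping with the limits, rather than the (entirely formal) homological input, to be the only place where genuine attention is needed.
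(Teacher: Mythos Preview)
Your argument is correct and follows essentially the same route as the paper's proof: obtain the short exact sequence $0\to L/(L\cap I_{\underline{t}}M)\to M/I_{\underline{t}}M\to N/I_{\underline{t}}N\to 0$ (your $L'=\im\varphi$ is exactly $L/(L\cap I_{\underline{t}}M)$), apply subadditivity and monotonicity of $\cL_I$ from \cite[Proposition~1.1(d)]{quasi-length}, divide, and pass to the limit. The paper simply writes ``dividing both sides by $t^d$ and taking limit over $t\to\infty$,'' whereas you spell out why the termwise inequality survives the $\lim_s\inf_{\underline t\ge s}$ operation; your extra care there is warranted and the resolution via genuine limits is the right one (note that the existence result you cite is stated in the paper for $\underline{\fh}$, but the same subadditivity argument applies to the unbarred $\fh$).
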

\begin{proof}[Proof]
Let $I=(x_1,\dots,x_d)$ and denote $(x^t_1,\dots,x^t_d)$ by $I_t$
for all integers $t\geq 1$. It is clear that one has the following
short exact sequence
$$0\to L/L\cap I_tM\to M/I_tM\to N/I_tN\to 0.$$
Hence $\cL_I(M/I_tM)\leq \cL_I(L/L\cap I_tM)+\cL_I(N/I_tN)$ by
\cite[Proposition 1.1(d)]{quasi-length}. Since there is a natural surjection
$L/I_tL\twoheadrightarrow L/L\cap I_tM$, one has $\cL_I(L/L\cap I_tM)\leq
\cL_I(L/I_tL)$ by \cite[Proposition 1.1(d)]{quasi-length}. Therefore,
$$\cL_I(M/I_tM)\leq \cL_I(L/I_tL)+\cL_I(N/I_tN).$$
Dividing both sides by $t^d$ and taking limit over $t\to \infty$
give us the desired inequality.
\end{proof}

\begin{prop}
\label{content-free} Let $R$ be a $d$-dimensional equicharacteristic
local ring and let $x_1,\dots,x_d\in R$ be a system of parameters.
Then
$$\fh^d_{\underline{x}}(R^n)=n$$ for all integers $n\geq 1$.
\end{prop}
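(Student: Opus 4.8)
The plan is to prove the inequalities $\fh^d_{\underline{x}}(R^n)\le n$ and $\fh^d_{\underline{x}}(R^n)\ge n$ separately; the first is routine and the second carries the content. For the upper bound, one checks that $\cL_I(R/I_{\underline{t}}R)\le t_1\cdots t_d$ for every $\underline{t}$, where $I=(x_1,\dots,x_d)$: filter $R/(x_1^{t_1},\dots,x_d^{t_d})R$ by the submodules generated by the images of all monomials $x_1^{a_1}\cdots x_d^{a_d}$ (with $0\le a_i<t_i$) of total degree at least $k$, and refine so that one such monomial is adjoined at each step; the successive quotients are then cyclic, killed by $I$, and in bijection with the admissible monomials. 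Hence $\fh^d_{\underline{x}}(R)\le 1$, and applying Proposition~\ref{content-ineq} to $0\to R^{n-1}\to R^n\to R\to 0$ and inducting on $n$ gives $\fh^d_{\underline{x}}(R^n)\le n$.

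For the lower bound I would first reduce to a complete local domain. Since $R/I_{\underline{t}}R$ has finite length, neither it nor any submodule of it changes upon completion, and for a finite-length module the quasilength over $R$ equals the one over $\widehat R$ (a finite-length cyclic $\widehat R$-module killed by $I\widehat R$ is already cyclic over $R$ and killed by $I$); thus $\fh^d_{\underline{x}}(R^n)=\fh^d_{\underline{x}}(\widehat R^n)$. Next, choosing a minimal prime $\mathfrak{p}$ of $\widehat R$ with $\dim\widehat R/\mathfrak{p}=d$ and setting $T:=\widehat R/\mathfrak{p}$, the surjection $\widehat R^n/I_{\underline{t}}\widehat R^n\twoheadrightarrow T^n/I_{\underline{t}}T^n$ carries a cyclic-killed-by-$(\underline{x})$ filtration of the source forward to one of the target with no more factors, each still cyclic and killed by $(\underline{x})$, and now over $T$ since these factors are subquotients of a $T$-module. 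Hence $\fh^d_{\underline{x}}(\widehat R^n)\ge\fh^d_{\underline{x}}(T^n)$, with $T$ a complete equicharacteristic local domain of dimension $d$ in which $\underline{x}$ is a system of parameters, and it suffices to prove $\fh^d_{\underline{x}}(T^n)\ge n$.

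Let $K$ be a coefficient field of $T$ and put $A=K[[x_1,\dots,x_d]]$; as $\underline{x}$ is a system of parameters, $A$ is a regular (hence Cohen--Macaulay) subring of $T$ over which $T$ is module-finite, and $A/(\underline{x})A=K$, so over $A$ a cyclic module killed by $(\underline{x})A$ is a $K$-space of dimension at most $1$, whence $\cL_{(\underline{x})A}^A(V)=\ell_A(V)$ for every finite-length $A$-module $V$. Suppose first that $T$ is Cohen--Macaulay; then $T$ is $A$-free, say of rank $r$, so any $T$-filtration of $W:=(T/I_{\underline{t}}T)^n$ realizing $\cL_I^T(W)$ refines over $A$ into at most $r\cdot\cL_I^T(W)$ factors, because each $T$-cyclic factor killed by $(\underline{x})T$ is a quotient of $T/(\underline{x})T\cong K^r$, hence a $K$-space of dimension at most $r$. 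Comparing with $\cL_{(\underline{x})A}^A(W)=\ell_A(W)=nr\,t_1\cdots t_d$ yields $\cL_I^T(W)\ge n\,t_1\cdots t_d$ for all $\underline{t}$, hence $\fh^d_{\underline{x}}(T^n)\ge n$; with the upper bound this settles the Cohen--Macaulay case.

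The main obstacle is the general case, when $T$ need not be Cohen--Macaulay: then $T$ is not $A$-free, the leading term of $\ell_A(T/I_{\underline{t}}T)$ is only $e((\underline{x})A;T)\,t_1\cdots t_d=r\,t_1\cdots t_d$ with $r<\ell_A(T/(\underline{x})T)$, and the comparison above degrades to a bound weaker than $n$. To repair this I would use the existence of big Cohen--Macaulay algebras in equal characteristic: let $B$ be a balanced big Cohen--Macaulay $T$-algebra, so $\underline{x}$ is a regular sequence on $B$ and $B\ne\mathfrak{m}_T B$. Tensoring a $T$-filtration of $W$ into $B$ gives, via its images, a filtration of $W\otimes_T B=(B/I_{\underline{t}}B)^n$ by cyclic $B$-modules killed by $(\underline{x})B$, so $\cL_I^T(W)\ge\cL_{(\underline{x})B}^B\big((B/I_{\underline{t}}B)^n\big)$; and a surjection of $B$ onto a complete Noetherian Cohen--Macaulay (for instance regular) local ring $C$ in which $\underline{x}$ maps to a system of parameters would reduce this further to $\cL_{(\underline{x})C}^C\big((C/I_{\underline{t}}C)^n\big)\ge n\,t_1\cdots t_d$, the Cohen--Macaulay case already established for $C$. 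The technical heart --- and the step where equal characteristic is genuinely used --- is to produce such a Cohen--Macaulay quotient of $B$, equivalently to make the monomial count go through over the non-Noetherian ring $B$.
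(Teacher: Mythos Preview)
Your upper bound and your reduction to a complete local domain $T$ are fine, and your length-counting argument when $T$ is Cohen--Macaulay over the regular subring $A=K[[x_1,\dots,x_d]]$ is correct and pleasant.  The gap is exactly where you say it is, and it is a genuine gap rather than a missing routine detail.

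In the non--Cohen--Macaulay case you pass to a big Cohen--Macaulay $T$-algebra $B$ and then ask for a surjection $B\twoheadrightarrow C$ onto a complete Noetherian Cohen--Macaulay local ring in which $\underline{x}$ maps to a system of parameters.  But such a $C$ would be a Noetherian Cohen--Macaulay $T$-algebra of dimension $d$ with $\underline{x}$ a system of parameters; producing one of these is essentially the small Cohen--Macaulay algebra problem, which is open in general and certainly far harder than the proposition at hand.  Your alternative, ``make the monomial count go through over the non-Noetherian ring $B$,'' amounts to showing
\[
\cL^B_{(\underline{x})}\bigl((B/I_{\underline{t}}B)^n\bigr)\ \ge\ n\,t_1\cdots t_d
\]
for any (possibly non-Noetherian) $B$ on which $\underline{x}$ is a regular sequence.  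Even when $n=1$ this is the statement that a regular sequence is a $\Q$-sequence, and for $n>1$ it is not a consequence of the $n=1$ case by any formal argument you have available: the length comparison you used in the Cohen--Macaulay case relied on $T$ being finite free over $A$, and there is no obvious replacement over $B$.  So your sketch stops precisely at the point that carries the real content of the proposition.

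The paper proceeds quite differently.  It first invokes \cite[Proposition~3.4]{quasi-length} to observe that ``$\fh^d_{\underline{x}}(R^n)=n$'' is an equational condition, so one may reduce to characteristic $p>0$.  The base case $n=1$ is \cite[Theorem~4.7]{quasi-length}.  For the lower bound when $n\ge2$, the paper argues by contradiction: a filtration of $R^n/I_qR^n$ with $h<nq^d$ cyclic factors (each a quotient of $R/I$) is amplified by applying the $e$th Frobenius functor and taking $n$-fold direct sums to produce, for every $l\ge1$, a filtration of $R^{n^l}/I_{q^l}R^{n^l}$ with at most $h^l$ factors each a quotient of $R/I$.  Comparing lengths gives
\[
n^l\,\lambda(R/I_{q^l})\ \le\ h^l\,\lambda(R/I),
\]
and dividing by $(nq^d)^l$ and letting $l\to\infty$ forces $\lim_l \lambda(R/I_{q^l})/q^{ld}=0$, contradicting positivity of the multiplicity $e(I,R)$.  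This Frobenius bootstrap is where equal characteristic is actually used, and it completely avoids the big Cohen--Macaulay machinery.
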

\begin{proof}[Proof]
It follows directly from \cite[Proposition 3.4]{quasi-length} that the
condition that $\fh^d_{\underline{x}}(R^n)=n$ is equational, and
hence reduction to characteristic $p>0$ is applicable. Therefore, it
suffices to prove $\fh^d_{\underline{x}}(R^n)=n$ in characteristic
$p>0$.\par

Use induction on $n$. When $n=1$, this is precisely \cite[Theorem
4.7]{quasi-length}.\par

Assume that $n\geq 2$. The short exact sequence $0\to R\to R^n\to
R^{n-1}\to 0$ and Proposition \ref{content-ineq} imply that
$\fh^d_{\underline{x}}(R^n)\leq \fh^d_{\underline{x}}(R)+\fh^d_{\underline{x}}(R^{n-1})$. By
induction, $\fh^d_{\underline{x}}(R^{n-1})=n-1$, and hence
$\fh^d_{\underline{x}}(R^n)\leq n$.\par

Assume that $\fh^d_{\underline{x}}(R^n)<n$. Let $I=(x_1,\dots,x_d)$
and denote $(x^t_1,\dots,x^t_d)$ by $I_t$ for all integers $t\geq
1$. By \cite[Theorem 2.4]{quasi-length} and \cite[Remark 3.7]{quasi-length}, there exist
an integer $q=p^e$ and a filtration
$$0\subsetneq M_1\subsetneq\cdots\subsetneq M_h=R^n/I_qR^n$$
of $R^n/I_qR^n$ with $M_{j+1}/M_j$ a homomorphic image of $R/I$ and
$h<nq^d$. As a consequence. one has
$$\lambda(R^n/I_qR^n)\leq \sum^{h-1}_{j=0}\lambda(M_{j+1}/M_j)\leq h\lambda(R/I).$$
 Applying the $e$-th Frobenius
functor to the above filtration, one has a filtration (with a slight
abuse of notation)
$$0\subsetneq M^{[q]}_1\subsetneq\cdots\subsetneq M^{[q]}_h=R^n/I_{q^2}R^n$$
of $R^n/I_{q^2}R^n$ with $M^{[q]}_{j+1}/M^{[q]}_j$ a homomorphic
image of $R/I_q$. Consequently, one has the following filtration of
$R^{n^2}/I_{q^2}R^{n^2}$
$$0\subsetneq F_1\subsetneq\cdots\subsetneq F_h=R^{n^2}/I_{q^2}R^{n^2}$$
where $F_j$ is the direct sum of $n$ copies of $M^{[q]}_j$. It is
clear that $F_{j+1}/F_j$ is a homomorphic image of $R^n/I_qR^n$ and
hence $$\lambda(R^{n^2}/I_{q^2}R^{n^2})\leq
\sum_{j=0}^{h-1}\lambda(F_{j+1}/F_j)\leq h\lambda(R^n/I_qR^n)\leq
h^2\lambda(R/I).$$ Similarly, one can prove that
$$n^l\lambda(R/I_{q^l})=\lambda(R^{n^l}/I_{q^l}R^{n^l})\leq h^l\lambda(R/I)$$
for all integers $l\geq 1$. Dividing both sides by $(nq^d)^l$, one
has
$$\frac{\lambda(R/I_{q^l})}{q^{ld}}\leq (\frac{h}{nq^d})^l\lambda(R/I)$$
for all integers $l\geq 1$. Since $h<nq^d$, it follows that
$$\lim_{l\to\infty}\frac{\lambda(R/I_{q^l})}{q^{ld}}=0$$
which is absurd because
$\lim_{l\to\infty}\frac{\lambda(R/I_{q^l})}{q^{ld}}$ is exactly
$e(I_q,R)$ (the multiplicity of $R$ with respect to $I_q$) which is
positive.\par

Therefore, one has $\fh^d_{\underline{x}}(R^n)=n$.
\end{proof}
\begin{cor}
Let $R$ be an equicharacteristic local ring and let
$x_1,\dots,x_s\in R$ be part of a system of parameters of $R$. Then
$$\fh^s_{\underline{x}}(R^n)=n$$ for all integers $n\geq 1$.
\end{cor}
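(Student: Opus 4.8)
The plan is to derive this from Proposition~\ref{content-free} by passing to a quotient ring in which $x_1,\dots,x_s$ becomes a full system of parameters. First I would fix $d=\dim R$, extend $x_1,\dots,x_s$ to a system of parameters $x_1,\dots,x_s,x_{s+1},\dots,x_d$ of $R$, and set $I=(x_1,\dots,x_s)$ and $I_t=(x_1^t,\dots,x_s^t)$. The one nonformal ingredient I need is the elementary estimate $\cL_I(R/I_tR)\le t^s$ for every $t\ge 1$. To prove it, filter $R/I_tR$ by the images $F_j$ of the powers $I^j$; since $I^{s(t-1)+1}\subseteq I_t$ the filtration has only finitely many nonzero terms, each factor $F_j/F_{j+1}$ is killed by $I$ and hence (filtering by a minimal generating set) has $I$-quasilength at most its minimal number of generators, and that factor is a homomorphic image of $I^j/I^{j+1}$ in which the monomial $x_1^{a_1}\cdots x_s^{a_s}$ of degree $j$ dies whenever some $a_i\ge t$. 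Summing the resulting bounds $\#\{a\in\mathbb N^s:\sum_i a_i=j,\ a_i\le t-1\}$ over $j$ gives exactly $t^s$. Concatenating $n$ copies of such a filtration for the direct summands of $R^n/I_tR^n\cong(R/I_tR)^n$ then gives $\cL_I(R^n/I_tR^n)\le n\,t^s$, and dividing by $t^s$ and passing to the limit yields $\fh^s_{\underline x}(R^n)\le n$. (Alternatively, one could reduce the upper bound to the case $n=1$ using Proposition~\ref{content-ineq} together with the $n=1$ instance of the estimate above.)

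For the reverse inequality I would set $A=R/(x_{s+1},\dots,x_d)R$; this is an equicharacteristic local ring of dimension $s$ in which $\bar x_1,\dots,\bar x_s$ is a system of parameters. For each $t$ there is a surjection $R^n/I_tR^n\twoheadrightarrow A^n/\bar I_tA^n$, where $\bar I=IA$ (obtained by also killing $x_{s+1},\dots,x_d$), so by the monotonicity of quasilength under surjections (\cite[Proposition~1.1(d)]{quasi-length}) we get $\cL_I(R^n/I_tR^n)\ge\cL_I(A^n/\bar I_tA^n)$, and the right-hand side equals $\cL_{\bar I}(A^n/\bar I_tA^n)$ because for the $A$-module $A^n/\bar I_tA^n$ the $R$- and $A$-submodule structures coincide and a cyclic $R$-module killed by $I$ that is also an $A$-module is a cyclic $A$-module killed by $\bar I$. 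Dividing by $t^s$ and letting $t\to\infty$, Proposition~\ref{content-free} applied to $A$ gives $\fh^s_{\underline x}(R^n)\ge\fh^s_{\underline{\bar x}}(A^n)=n$. Combining the two bounds yields $\fh^s_{\underline x}(R^n)=n$.

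The only step that is not bookkeeping is the estimate $\cL_I(R/I_tR)\le t^s$, and even there the point to be careful about is that the count of surviving monomials is \emph{exactly} $t^s$, so that normalizing by $t^s$ produces the value $n$ and not something larger. I expect no genuine difficulty beyond this; note in particular that the hypothesis that $x_1,\dots,x_s$ be part of a system of parameters enters only in the lower bound, through the existence of the $s$-dimensional quotient $A$, whereas the upper bound holds for an arbitrary sequence $x_1,\dots,x_s$.
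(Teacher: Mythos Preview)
Your argument is correct, but it differs from the paper's in how the lower bound is obtained. The paper localizes at a prime $P$ minimal over $(x_1,\dots,x_s)$ with $\hgt P = s$ and invokes \cite[Proposition~2.5]{quasi-length} to get $\fh^s_{\underline{x}}(R^n)\ge\fh^s_{\underline{x}}(R_P^n)$, then applies Proposition~\ref{content-free} to $R_P$; you instead pass to the quotient $A=R/(x_{s+1},\dots,x_d)$ and compare quasilengths directly via the surjection $R^n/I_{\underline t}R^n\twoheadrightarrow A^n/\bar I_{\underline t}A^n$. Both maneuvers produce an $s$-dimensional equicharacteristic local ring in which the sequence becomes a full system of parameters, so both reduce to Proposition~\ref{content-free}; your route is more self-contained (it avoids citing the base-change result \cite[Proposition~2.5]{quasi-length}), while the paper's is shorter since it simply quotes that proposition. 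For the upper bound the paper just cites \cite[Proposition~1.2(a)]{quasi-length} for $n=1$ and inducts via Proposition~\ref{content-ineq}, whereas you reprove the monomial-count estimate $\cL_I(R/I_tR)\le t^s$ by hand; these are the same argument at different levels of detail. One small point: your phrasing ``dividing by $t^s$ and letting $t\to\infty$'' treats only the diagonal $\underline t=(t,\dots,t)$, but since $\fh^s_{\underline x}$ is a $\liminf$ over all tuples $\underline t$, the lower bound requires the surjection (and hence the quasilength inequality) for every $\underline t$, which of course holds by the same reasoning.
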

\begin{proof}[Proof]
Use induction on $n$.\par When $n=1$, this follows from
\cite[Proposition 1.2(a)]{quasi-length} that $\fh^s_{\underline{x}}(R)\leq 1$.
Let $P$ be a prime ideal minimal over $I=(x_1,\dots,x_s)$ with
$\hgt(P)=s$. According to \cite[Proposition 2.5]{quasi-length}, we have
$\fh^s_{\underline{x}}(R)\geq \fh^s_{\underline{x}}(R_P)$. But
$\fh^s_{\underline{x}}(R_P)=1$ by Proposition \ref{content-free}
since $x_1,\dots,x_s$ is a system of parameters of $R_P$. Hence
$\fh^s_{\underline{x}}(R)\geq 1$. Therefore,
$$\fh^s_{\underline{x}}(R)=1.$$
Assume that $n\geq 2$ and $\fh^s_{\underline{x}}(R^{n-1})=n-1$. It
follows from Proposition \ref{content-ineq} that
$\fh^s_{\underline{x}}(R^n)\leq n$. According to \cite[Proposition
2.5]{quasi-length}, we have $\fh^s_{\underline{x}}(R^n)\geq
\fh^s_{\underline{x}}(R^n_P)$. But $\fh^s_{\underline{x}}(R^n_P)=n$
by Proposition \ref{content-free} since $x_1,\dots,x_s$ is a system
of parameters of $R_P$. Hence $\fh^s_{\underline{x}}(R^n)\geq n$.
Therefore,
$$\fh^s_{\underline{x}}(R^n)=n.$$
\end{proof}

\begin{cor}
\label{content-domain} Let $R$ be a $d$-dimensional
equicharacteristic local domain and let $x_1,\dots,x_d\in R$ be a
system of parameters. Then
\[\fh^d_{\underline{x}}(M)=\rk(M)\]
for all finitely generated $R$-modules $M$.\par In particular, under
the same hypotheses, $\fh^d_{\underline{x}}$ is additive, i.e.,
given a short exact sequence $0\to L\to M\to N\to 0$ of finitely generated 
$R$-modules, we have
$$\fh^d_{\underline{x}}(M)=\fh^d_{\underline{x}}(L)+\fh^d_{\underline{x}}(N).$$
\end{cor}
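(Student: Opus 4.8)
Write $r=\rk(M)$. The plan is to compute $\fh^d_{\underline x}(M)$ by comparing $M$ with $R^r$ through two short exact sequences, using two facts: Proposition~\ref{content-free}, which gives $\fh^d_{\underline x}(R^r)=r$, and the vanishing $\fh^d_{\underline x}(N)=0$ for every finitely generated torsion $R$-module $N$ (proved below). The only other tools are the subadditivity of Proposition~\ref{content-ineq} and the fact, immediate from \cite[Proposition 1.1(d)]{quasi-length}, that $\fh^d_{\underline x}$ is monotone under surjections: if $M\twoheadrightarrow N$ then $M/I_{\underline t}M\twoheadrightarrow N/I_{\underline t}N$, whence $\cL_I(N/I_{\underline t}N)\le\cL_I(M/I_{\underline t}M)$ and so $\fh^d_{\underline x}(N)\le\fh^d_{\underline x}(M)$ after dividing by $t_1\cdots t_d$ and passing to the limit. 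There is no comparable monotonicity for submodules, so the sequences below must be oriented with care.

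To prove $\fh^d_{\underline x}(N)=0$ for $N$ finitely generated and torsion: since $R$ is a domain, $\Ann_R N\ne 0$, so $\Supp_R N$ is a proper closed subset of $\Spec R$ and $\dim_R N\le d-1$. Because $x_1,\dots,x_d$ is a system of parameters, $\sqrt I=\fm$, so $H^d_I(N)=H^d_\fm(N)=0$ by Grothendieck's vanishing theorem, as $d>\dim_R N$. Hence the canonical map $N\to N/I_{\underline t}N\to H^d_I(N)$ is zero for every $\underline t$, so $(I_{\underline t}N)^{\lm}=N$ and $\cL_I(N/(I_{\underline t}N)^{\lm})=0$; thus $\underline{\fh}^d_{\underline x}(N)=0$, and $\fh^d_{\underline x}(N)=\underline{\fh}^d_{\underline x}(N)=0$ by \cite[Theorem 5.1]{quasi-length}. (Alternatively this can be shown by a direct Hilbert--Samuel estimate: $\cL_I(N/I_{(t,\dots,t)}N)\le\lambda(N/I^{dt}N)=O(t^{\dim_R N})=O(t^{d-1})$, which suffices to bound $\fh^d_{\underline x}(N)$ by $0$.)

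Now assume $r\ge 1$; the case $r=0$ is the vanishing just established. Since $R$ is a domain, $M$ contains a free submodule $F\cong R^r$ with $M/F$ torsion, so Propositions~\ref{content-ineq} and~\ref{content-free} together with the vanishing give $\fh^d_{\underline x}(M)\le\fh^d_{\underline x}(F)+\fh^d_{\underline x}(M/F)=r$. For the reverse inequality, let $\overline M=M/\mathrm{tors}(M)$, which is torsion-free of rank $r$; it embeds in a free module $R^r$ with torsion cokernel $C$, so $r=\fh^d_{\underline x}(R^r)\le\fh^d_{\underline x}(\overline M)+\fh^d_{\underline x}(C)=\fh^d_{\underline x}(\overline M)$, and since $M\twoheadrightarrow\overline M$, monotonicity gives $\fh^d_{\underline x}(M)\ge\fh^d_{\underline x}(\overline M)\ge r$. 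Therefore $\fh^d_{\underline x}(M)=r=\rk(M)$; the additivity statement then follows because $\rk$ is additive on short exact sequences of finitely generated modules over a domain. The one genuine input is the torsion-vanishing lemma; everything else is a formal manipulation of short exact sequences, the only subtlety being that one must exploit monotonicity under surjections (not submodules), which fixes the orientation of each sequence used.
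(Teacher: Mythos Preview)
Your proof is correct and follows essentially the same approach as the paper's: both arguments use the vanishing of $\fh^d_{\underline x}$ on torsion modules, sandwich $M$ (or its torsion-free quotient) between free modules of rank $r$ via two short exact sequences with torsion quotients, and apply Proposition~\ref{content-ineq} together with monotonicity under surjections. The only cosmetic differences are that the paper first reduces to the torsion-free case before running both inequalities, and cites \cite[Proposition~2.2]{quasi-length} directly for the torsion-vanishing rather than going through $\underline{\fh}^d_{\underline x}$ and \cite[Theorem~5.1]{quasi-length} as you do.
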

\begin{proof}[Proof]
Let $I=(x_1,\dots,x_d)$ and let $T$ denote that torsion submodule of
$M$. Then $\dim(T)<d$ and hence $H^d_I(T)=0$. By 
\cite[Proposition 2.2]{quasi-length}, one has $\fh^d_{\underline{x}}(T)=0$. The short exact
sequence $0\to T\to M\to M/T\to 0$ and Proposition
\ref{content-ineq} imply that
$$\fh^d_{\underline{x}}(M/T)\leq \fh^d_{\underline{x}}(M)\leq \fh^d_{\underline{x}}(T)+\fh^d_{\underline{x}}(M/T)=\fh^d_{\underline{x}}(M/T).$$
Hence, we may assume that $M$ is torsion-free. Consequently, there
is a short exact sequence $0\to M\to R^r\to R^r/M\to 0$ where
$r=\rk(M)$ and $\dim(R^r/M)<d$ (hence
$\fh^d_{\underline{x}}(R^r/M)=0$). Thus, one has
$$\rk(M)=r=\fh^d_{\underline{x}}(R^r)\leq
\fh^d_{\underline{x}}(M)+\fh^d_{\underline{x}}(R^r/M)=\fh^d_{\underline{x}}(M).$$
Let $g_1,\dots,g_r$ be elements in M whose images in
$(R\backslash{0})^{-1}M$ form a basis, and let
$M'=\sum^{r}_{j=1}Rg_j$. Then one has a short exact sequence $0\to
M'\to M\to M/M'\to 0$. Since $(R\backslash{0})^{-1}(M/M')=0$, it
follows that $\dim(M/M')<d$ and hence
$\fh^d_{\underline{x}}(M/M')=0$. By Proposition \ref{content-ineq},
one has
$$\fh^d_{\underline{x}}(M)\leq \fh^d_{\underline{x}}(M')+\fh^d_{\underline{x}}(M/M')=\fh^d_{\underline{x}}(M').$$
It is clear that there is a surjection $R^r\to M'$ and hence
$\fh^d_{\underline{x}}(M')\leq \fh^d_{\underline{x}}(R^r)=r$.
Therefore, $\fh^d_{\underline{x}}(M)\leq r$. This finishes the
proof.
\end{proof}

From Corollary \ref{content-domain}, one may expect quasi-length to
be additive under the same hypotheses as well. However, quasi-length
does not respect direct sum even when $R$ is a DVR, as shown in the
following example.

\begin{ex}
Let $R=k[[x]]$, let $I=(x^2)$, let $M=(x)/(x^4)$ and let
$N=(x)/(x^2)$. Then it is easy to see that $\mathcal{L}_I(N)=1$.
Since $M$ is not killed by $I$, one has $\mathcal{L}_I(M)\geq 2$; on
the other hand, one has an $I$-filtration
$$0\subsetneq M_1=R\cdot x^3\subsetneq M_2=R\cdot x^3+R\cdot x=M,$$
hence $\mathcal{L}_I(M)=2$.\par

We will prove that $\mathcal{L}_I(M\oplus N)=2\neq 3$ by
constructing a filtration of $M\oplus N$ with only 2 factors that
are homomorphic images of $R/I$ ({\itshape a priori},
$\mathcal{L}_I(M\oplus N)\geq
\max\{\mathcal{L}_I(M),\mathcal{L}_I(N)\}=2$).\par

Let $L_1=R\cdot (x^2,x)$ and $L_2=R\cdot (x^2,x)+R\cdot (x,x)$. It
is easy to check that
\begin{enumerate}
\item $IL_1=0$;
\item $IL_2\subset L_1$ and $L_2/L_1$ is cyclic;
\item $L_2=M\oplus N$.
\end{enumerate}
Therefore
$$0\subsetneq L_1\subsetneq L_2=M\oplus N$$
is the desired filtration.
\end{ex}

When $R$ is not a domain, we can't expect $\fh^d_{\underline{x}}$ to
be additive.

\begin{ex}
Let $R=k[[u,v]]/(uv)$ where $u$ and $v$ are indeterminates over
a field $k$. Set $x=u+v$, then $x$ is a system of parameter of $R$
($\dim(R)=1$). Since $R$ is reduced and has only two minimal prime
ideals $(u)$ and $(v)$, we have a short exact sequence
$$0\to R\to R/(u)\oplus R/(v)\to R/(u,v)\to 0.$$
It is clear that $\fh^1_{\underline{x}}(R/(u,v))=0$. As a
consequence of Proposition \ref{content-ineq}, we have
$\fh^1_{\underline{x}}(R/(u)\oplus R/(v))\leq
\fh^1_{\underline{x}}(R)=1$. But, it is easy to check that
$$\fh^1_{\underline{x}}(R/(u))=\fh^1_{\underline{x}}(R/(v))=1,$$
and hence $$1=\fh^1_{\underline{x}}(R/(u)\oplus R/(v))\neq
\fh^1_{\underline{x}}(R/(u))+\fh^1_{\underline{x}}(R/(v))=2.$$
\end{ex}
This example suggests that $\fh^d_{\underline{x}}(R)$ should not
exceed $\fh^d_{\underline{x}}(R/P)$ for all minimal prime ideals
$P$. The precise formula is given in the following theorem.

\begin{thm}
\label{main-thm} 
Let $R$ be a $d$-dimensional equicharacteristic
local ring and suppose that $x_1,\dots,x_d$ be a system of parameters of $R$.
Let $\Min_d(R)$ denote the set of minimal prime ideals $P$ of $R$ with
$\dim(R/P)=d$. Then for all finitely generated $R$-modules $M$,
$$\fh^d_{\underline{x}}(M)=\max\{\rk_{R/P}(M/PM)|P\in\Min_d(R)\}.$$
\end{thm}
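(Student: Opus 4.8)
The plan is to reduce the general statement to the two extremes already handled—the torsion-free-over-a-domain case (Corollary~\ref{content-domain}, via Proposition~\ref{content-free}) and the vanishing case for modules of dimension $<d$—and then to glue along the minimal primes in $\Min_d(R)$. Write $\Min_d(R)=\{P_1,\dots,P_r\}$. First I would establish the upper bound $\fh^d_{\underline{x}}(M)\le \max_i \rk_{R/P_i}(M/P_iM)$. Let $N=H^0_{\fa}(M)$ where $\fa=\bigcap_{i} P_i$ localized appropriately—more precisely, take $N\subseteq M$ to be the submodule of elements killed by an element lying outside every $P_i$; equivalently, $N$ is the largest submodule of $M$ supported off $\bigcup_i V(P_i)$. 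Then $\dim N<d$ (its support meets no $P_i\in\Min_d(R)$), so $\fh^d_{\underline{x}}(N)=0$ by \cite[Proposition 2.2]{quasi-length}, and by Proposition~\ref{content-ineq} applied to $0\to N\to M\to M/N\to 0$ we get $\fh^d_{\underline{x}}(M)\le \fh^d_{\underline{x}}(M/N)$ and also $\ge$, so we may replace $M$ by $M/N$ and assume $M$ has no submodule supported off $\{P_i\}$. Now filter: using that the $P_i$ are the top-dimensional minimal primes, build a prime filtration $0=M_0\subsetneq M_1\subsetneq\cdots\subsetneq M_\ell=M$ with $M_j/M_{j-1}\cong R/Q_j$, where either $\dim R/Q_j<d$ (contributing $0$ to content by Proposition~\ref{content-ineq}) or $Q_j=P_i$ for some $i$. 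Repeated application of Proposition~\ref{content-ineq} gives $\fh^d_{\underline{x}}(M)\le \sum_{j:\,Q_j=P_i \text{ some } i}\fh^d_{\underline{x}}(R/P_i)$, and by Corollary~\ref{content-domain} each such term is $1$ (since $\underline{x}$ is a system of parameters modulo $P_i$, as $\dim R/P_i=d$). This bounds $\fh^d_{\underline{x}}(M)$ by the total number of $P_i$-factors summed over all $i$—but that is too weak; it gives $\sum_i \rk_{R/P_i}(M/P_iM)$ rather than the max.

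So the upper bound needs to be done \emph{one prime at a time}. The fix is to show that for each fixed $i$, $\fh^d_{\underline{x}}(M)\le \rk_{R/P_i}(M/P_iM)$ by exploiting Proposition~\ref{content-ineq} and a localization argument. Let $r_i=\rk_{R/P_i}(M/P_iM)$, which is the length of $M_{P_i}$ as an $R_{P_i}$-module. Choose $g_1,\dots,g_{r_i}\in M$ whose images generate $M_{P_i}$; let $M'=\sum_k Rg_k$. Then $M/M'$ is supported off $P_i$ but possibly on other $P_j$, so its dimension need not be $<d$; this is precisely where the argument is delicate. I would handle it by first passing to $M/N$ as above and then, for the purpose of the bound against prime $P_i$, further killing the submodule supported off $P_i$ alone—but I must check this second quotient does not change $\fh^d_{\underline{x}}$, which it does \emph{not} in general because there may be content coming from the other $P_j$. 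The clean route is instead: localize the entire content inequality. By \cite[Proposition 2.5]{quasi-length} one has $\fh^d_{\underline{x}}(M)\ge \fh^d_{\underline{x}}(M_{P_i})$; for the reverse, one needs $\fh^d_{\underline{x}}(M)=\max_i \fh^d_{\underline{x}}(M_{P_i})$, and then $\fh^d_{\underline{x}}(M_{P_i})=r_i$ follows from Proposition~\ref{content-free}/Corollary~\ref{content-domain} applied over the $d$-dimensional domain $R/P_i$ (after noting content over $R_{P_i}$ equals content over $(R/P_i)$-related data; the local cohomology $H^d_{IR_{P_i}}(M_{P_i})$ only sees the top-dimensional part).

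Thus the real content of the theorem is the identity
$$\fh^d_{\underline{x}}(M)=\max_{P\in\Min_d(R)} \fh^d_{\underline{x}}(R_P\otimes_R M),$$
with the $\ge$ direction free from \cite[Proposition 2.5]{quasi-length}. For $\le$, I would argue as follows. Reduce to characteristic $p>0$: by \cite[Proposition 3.4]{quasi-length} (as used in the proof of Proposition~\ref{content-free}) the assertion is equational in the relevant data, so it suffices to treat the positive-characteristic case. There, use the Frobenius/length estimate exactly as in the proof of Proposition~\ref{content-free}: a too-short filtration of $M/I_qM$ by homomorphic images of $R/I$ would, after iterating Frobenius, force $\lambda(M/I_{q^\ell}M)/q^{\ell d}\to 0$; but $\lim_\ell \lambda(M/I_{q^\ell}M)/q^{\ell d}$ is the normalized multiplicity $e(I;M)=\sum_{P\in\Min_d(R)} \lambda(M_P)\, e(I;R/P)$ (associativity of multiplicities), which is bounded below in terms of $\max_i \lambda(M_{P_i})=\max_i r_i$ once we know $e(I;R/P)\ge 1$ and, more carefully, that the dominant term forces the filtration length to be at least $(\max_i r_i)\,q^d$ asymptotically. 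Combining this lower bound on $\cL_I(M/I_qM)$ with the upper bound from the prime filtration (which shows content is finite and $\le \sum_i r_i$) pins down $\fh^d_{\underline{x}}(M)=\max_i r_i$.

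The main obstacle is exactly this localization/additivity identity $\fh^d_{\underline{x}}(M)=\max_i \fh^d_{\underline{x}}(M_{P_i})$: the difficulty is that $M$ can carry content simultaneously at several top-dimensional primes, and the short-exact-sequence inequality of Proposition~\ref{content-ineq} only ever gives \emph{sub}additivity, so naively summing produces $\sum_i r_i$, not $\max_i r_i$. Overcoming this requires the multiplicity computation in characteristic $p$ to see that the filtration length is governed by the \emph{largest} local length $\max_i r_i$ and not their sum—intuitively because a single filtration of $M/I_qM$ by images of $R/I$ can ``serve'' all minimal primes at once, since an image of $R/I$ contributes $\le 1$ to each $\lambda(\cdot)_{P_i}$. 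Making that heuristic precise—i.e., showing $\cL_I(M/I_qM)\ge (\max_i r_i)\,q^d - o(q^d)$—is the technical heart, and it is where the reduction to positive characteristic and the Frobenius iteration are indispensable.
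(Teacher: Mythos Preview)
Your lower bound is fine: localization via \cite[Proposition 2.5]{quasi-length} together with Corollary~\ref{content-domain} gives $\fh^d_{\underline{x}}(M)\ge \max_i r_i$, exactly as in the paper.

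The gap is the upper bound $\fh^d_{\underline{x}}(M)\le \max_i r_i$. Your Frobenius/multiplicity argument is pointed in the wrong direction: the mechanism of Proposition~\ref{content-free} (``a too-short filtration, iterated by Frobenius, forces multiplicity to vanish'') yields a \emph{lower} bound on quasilength, hence on $\fh^d_{\underline{x}}$, not an upper bound. What you end up with is $\max_i r_i \le \fh^d_{\underline{x}}(M)\le \sum_i r_i$, which does not ``pin down'' $\max_i r_i$. Moreover, the Frobenius iteration in Proposition~\ref{content-free} uses the special structure of $R^n$ (direct sums and the fact that Frobenius sends $R^n/I_qR^n$ to $R^n/I_{q^2}R^n$); for an arbitrary $M$ there is no such iteration, so even as a lower-bound argument it does not extend. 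Finally, the reduction to characteristic $p$ is irrelevant here: the paper proves the upper bound without any appeal to characteristic.

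Here is what the paper does and what you are missing. Let $h=\max_i r_i$ and $W=R\setminus\bigcup_i P_i$, so that $W^{-1}R\cong\bigoplus_i R_{P_i}$ is Artinian and $W^{-1}M\cong\bigoplus_i M_{P_i}$. For each $i$ take a composition series of $M_{P_i}$ of length $r_i$ with factors isomorphic to $R_{P_i}/P_iR_{P_i}$, pad it trivially to length $h$, and take the direct sum over $i$. This produces a filtration $0=L_0\subsetneq L_1\subsetneq\cdots\subsetneq L_h=W^{-1}M$ in which each $L_{j+1}/L_j$ is a quotient of $W^{-1}R$. Contract to $M$ to obtain $0=M_0\subseteq M_1\subseteq\cdots\subseteq M_h=M$ with $W^{-1}(M_{j+1}/M_j)$ cyclic over $W^{-1}R$. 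Pick $g\in M_{j+1}/M_j$ whose image generates $W^{-1}(M_{j+1}/M_j)$; then $(M_{j+1}/M_j)/Rg$ has zero $W$-localization, hence dimension $<d$, hence content $0$, and Proposition~\ref{content-ineq} gives $\fh^d_{\underline{x}}(M_{j+1}/M_j)\le \fh^d_{\underline{x}}(Rg)\le 1$. Summing, $\fh^d_{\underline{x}}(M)\le h$. Your heuristic that ``a single factor can serve all minimal primes at once'' is exactly right; the semi-localization at $W$ is how one makes it precise, and it is this construction---not a Frobenius contradiction---that you need.
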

\begin{proof}[Proof]
If $\dim(M)<d$, then $H^d_{(\underline{x})}(M)=0$ and hence
$\fh^d_{\underline{x}}(M)=0$. It is clear that $\rk_{R/P}(M/PM)=0$
for all $P\in \Min(R)$. Thus, the theorem is true when
$\dim(M)<d$.\par

Assume that $\dim(M)=d$. Let $N$ be the largest submodule of $M$
with dimension smaller than $d$. The short exact sequence $0\to N\to
M\to M/N\to 0$ and Proposition \ref{content-ineq} imply that
$\fh^d_{\underline{x}}(M)=\fh^d_{\underline{x}}(M/N)$. Hence, we may
assume that $M$ is of pure dimension $d$. Since $M$ is naturally an
$R/\Ann_R(M)$-module, replacing $R$ by $R/\Ann_R(M)$, we may assume
that $M$ is faithful (consequently $R$ is also of pure dimension
$d$). Let $P$ be a minimal prime ideal of $R$ and let $y_i$ denote
the image of $x_i$ in $R/P$. Then by \cite[Proposition 2.5]{quasi-length},
$\fh^d_{\underline{x}}(M)\geq \fh^d_{\underline{y}}(M/PM)$; the
latter is $\rk_{R/P}(M/PM)$ by Corollary \ref{content-domain}.
Therefore,
$$\fh^d_{\underline{x}}(M)\geq\max\{\rk_{R/P}(M/PM)|P\in\Min_d(R)\}.$$\par

Without loss of generality, we may assume that $\Min_d(R)\subset
\Supp(M)$. For each $P\in \Min_d(R)$, we have a filtration
$$0\subsetneq L(P)_1\subsetneq \cdots\subsetneq L(P)_{\rk_{R/P}(M/PM)}=M_P$$
with $L(P)_{j+1}/L(P)_j\cong
R_P/PR_P(=((R/P)\backslash\{0\})^{-1}R/P)$. Set
$$h=\max\{\rk_{R/P}(M/PM)|P\in\Min_d(R)\}$$ and extend the above
filtration trivially (if necessary) as follows
$$0\subsetneq L(P)_1\subsetneq \cdots\subsetneq L(P)_{\rk_{R/P}(M/PM)}=\cdots=L(P)_h=M_P.$$
Let $L_j=\oplus_{P\in\Min_d(R)}L(P)_j$, then we have a filtration
\begin{equation}
\label{local-filtration} 0\subsetneq L_1\subsetneq\cdots\subsetneq
L_h=\oplus_{P\in\Min_d(R)}M_P
\end{equation}
with $L_{j+1}/L_j$ a homomorphic image of
$\oplus_{P\in\Min_d(R)}R_P$.\par

Let $W=R\backslash\cup_{P\in\Min_d(R)}P$. It is clear that
$W^{-1}R=\oplus_{P\in\Min_d(R)}R_P$ and
$W^{-1}M=\oplus_{P\in\Min_d(R)}M_P$ (since $W^{-1}R$ is artinain).
Thus, the filtration (\ref{local-filtration}) gives us a filtration
of $W^{-1}M$ with each factor a homomorphic image of $W^{-1}R$,
which in turn gives us a filtration of $M$
$$0\subsetneq M_1\subsetneq \cdots\subsetneq M_h=M$$
such that there is a surjection $W^{-1}R\to W^{-1}(M_{j+1}/M_j)$ for
each $j$.\par

Let $g$ be an element of $M_{j+1}/M_j$ whose image in
$W^{-1}(M_{j+1}/M_j)$ generates $W^{-1}(M_{j+1}/M_j)$. Then we have
a short exact sequence $0\to Rg\to M_{j+1}/M_j\to
(M_{j+1}/M_j)/Rg\to 0$. Since $W^{-1}Rg=W^{-1}(M_{j+1}/M_j)$, we
have $$W^{-1}((M_{j+1}/M_j)/Rg)=0.$$ Thus,
$\dim((M_{j+1}/M_j)/Rg)<d$ and hence
$\fh^d_{\underline{x}}((M_{j+1}/M_j)/Rg)=0$. It follows from
Proposition \ref{content-ineq} that
$$\fh^d_{\underline{x}}(M_{j+1}/M_j)\leq \fh^d_{\underline{x}}(Rg)+\fh^d_{\underline{x}}((M_{j+1}/M_j)/Rg)=\fh^d_{\underline{x}}(Rg)\leq 1.$$
Furthermore, as a consequence of Proposition \ref{content-ineq}, we
have
$$\fh^d_{\underline{x}}(M)\leq \sum^{h-1}_{j=1}\fh^d_{\underline{x}}(M_{j+1}/M_j)\leq h=\max\{\rk_{R/P}(M/PM)|P\in\Min_d(R)\}$$
which finishes the proof.
\end{proof}

Recall that, if $M$ is a finitely generated module over a local ring
$R$, then a {\it system of parameters} of $M$ is a sequence of $d=\dim(M)$
elements $x_1,\dots,x_d$ in $R$ such that
$\dim(M/(x_1,\dots,x_d)M)=0$.\par

\begin{cor}
Let $R$ be an equicharacteristic  local ring and let $M$ be a finite
$R$-module of dimension $d$. Let $\Min_d(M)$ denote the set of
prime ideals $P$ minimal over $\Ann_R(M)$ with $\dim(R/P)=\dim(M)$.
Then for any system of parameters $x_1,\dots,x_d$ of $M$,
$$\fh^d_{\underline{x}}(M)=\max\{\rk_{R/P}(M/PM)|P\in\Min(M)\}.$$
\end{cor}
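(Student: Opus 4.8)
The plan is to reduce the statement about a module $M$ with a system of parameters to the situation already handled in Theorem~\ref{main-thm}, where the ring itself has dimension $d$. First I would pass to $\bar R = R/\Ann_R(M)$. Since $x_1,\dots,x_d$ is a system of parameters of $M$, i.e.\ $\dim(M/(x_1,\dots,x_d)M)=0$, and $M$ is a faithful $\bar R$-module of dimension $d$, the images $\bar x_1,\dots,\bar x_d$ of $x_1,\dots,x_d$ form a system of parameters of $\bar R$ in the sense of Theorem~\ref{main-thm} (because $\dim \bar R = \dim M = d$ and $\bar R/(\bar x_1,\dots,\bar x_d)\bar R$ is a quotient of $M/(x_1,\dots,x_d)M$, hence has dimension $0$). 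One also checks that $\bar R$ is equicharacteristic since $R$ is. The key compatibility I would verify is that $\fh^d_{\underline x}(M)$ computed over $R$ equals $\fh^d_{\underline{\bar x}}(M)$ computed over $\bar R$: the modules $I_{\underline t}M$ depend only on the action of the $x_i$ on $M$, and $\mathcal L_I(M/I_{\underline t}M)$ over $R$ equals $\mathcal L_{\bar I}(M/I_{\underline t}M)$ over $\bar R$, since a cyclic module killed by $(x_1,\dots,x_d)$ is the same thing as a cyclic module killed by $(\bar x_1,\dots,\bar x_d)$. Similarly $H^d_{(\underline x)}(M) \cong H^d_{(\underline{\bar x})}(M)$, so the $\lm$-closures agree and $\underline{\fh}$ is unchanged as well.

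Having made this reduction, Theorem~\ref{main-thm} applied to the ring $\bar R$, the system of parameters $\bar x_1,\dots,\bar x_d$, and the module $M$ gives
$$\fh^d_{\underline{\bar x}}(M) = \max\{\rk_{\bar R/Q}(M/QM) \mid Q \in \Min_d(\bar R)\}.$$
It remains to match the index sets. The minimal primes $Q$ of $\bar R = R/\Ann_R(M)$ with $\dim(\bar R/Q)=d$ correspond exactly to the primes $P$ of $R$ that are minimal over $\Ann_R(M)$ with $\dim(R/P)=\dim(M)=d$, via $Q = P/\Ann_R(M)$; this is the set $\Min_d(M)$ in the corollary's notation. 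Moreover $\bar R/Q \cong R/P$ and $M/QM = M/PM$, so $\rk_{\bar R/Q}(M/QM) = \rk_{R/P}(M/PM)$. Substituting, $\fh^d_{\underline x}(M) = \max\{\rk_{R/P}(M/PM) \mid P \in \Min_d(M)\}$, which is the claim.

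The only genuinely delicate point is the invariance of $\fh^d_{\underline x}$ (and the relevant quasilengths and local cohomology) under the change of rings $R \rightsquigarrow R/\Ann_R(M)$; everything else is bookkeeping. I expect this to cause no real trouble, since quasilength, $I_{\underline t}M$, and $H^d_{(\underline x)}(M)$ are all intrinsic to $M$ together with the action of the elements $x_i$, so they are insensitive to enlarging or shrinking the ambient ring as long as that action is preserved and the annihilator is not made larger; one should simply be careful that "killed by $I$" and "killed by $\bar I$" define the same class of cyclic modules, which is immediate. Alternatively, one could avoid the reduction entirely by repeating the two-sided estimate of Theorem~\ref{main-thm} verbatim, using $\Min_d(M)$ in place of $\Min_d(R)$ and the localization $W^{-1}M$ at $W = R \setminus \bigcup_{P \in \Min_d(M)} P$ — but invoking Theorem~\ref{main-thm} as a black box is cleaner.
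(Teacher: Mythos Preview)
Your proposal is correct and follows essentially the same route as the paper: pass to $\bar R = R/\Ann_R(M)$, note that the images of the $x_i$ form a system of parameters for $\bar R$, invoke the invariance of $\fh^d_{\underline{x}}$ under this change of base ring (which the paper simply cites from \cite{quasi-length}), and apply Theorem~\ref{main-thm}. One small slip in your parenthetical: $\bar R/(\bar x_1,\dots,\bar x_d)\bar R$ is not in general a quotient of $M/(x_1,\dots,x_d)M$; the correct reason it has dimension $0$ is that for a faithful finitely generated $\bar R$-module one has $\Supp_{\bar R}(M)=\Spec(\bar R)$, so $V(\bar x_1,\dots,\bar x_d)=\Supp(M/(\underline{x})M)$ is zero-dimensional.
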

\begin{proof}[Proof]
It is clear that $M$ is naturally an $R/\Ann_R(M)$-module. By the
remark on page 11 in \cite{quasi-length}, we may replace $R$ by $R/\Ann_R(M)$
and assume that $x_1,\dots,x_d$ is also a system of parameters of
$R$. Then Theorem \ref{main-thm} finishes the proof.
\end{proof}
\section{A non-zero local cohomology module whose content is 0}\label{notrob}   
Let $R=A[x,y,u,v]$ where $A$ is any Noetherian commutative ring. Let $P=(x,y)$, $Q=(u,v)$, and $I=(x_1,x_2,x_3)$ where $x_1=xu$, $x_2=yv$, and $x_3=xv-yu$. Then one can check that $J=P\cap Q$ is the radical of $I$. It is left open in \cite[Example 3.14]{quasi-length} whether $x_1,x_2,x_3$ forms a $\Q$-sequence when $A = k$ is a field.  
Our goal of this section is to prove that
\[\underline{\fh}^3_{\underline{x}}(R)=\fh^3_{\underline{x}}(R)=0\] 
no matter what $A$ is, and consequently that $\underline{x}$ is not a $\Q$-sequence for any choice of $A$. 
To the best of our knowledge, this is the 
first example of a nonzero local cohomology module whose content is 0.\par

We begin with an easy comparison between the ideals $I_t=(x^t_1,x^t_2,x^t_3)$ and $I'_t=(x^t_1,x^t_2,x^tv^t+y^tu^t)$.
\begin{prop}
\label{comparison}
We have
\begin{enumerate}
\item $(xv)^2,(yu)^2\in I$
\item $I_{4t}\subseteq I'_t$
\item $I'_{12t}\subseteq I_t$
\end{enumerate}
\end{prop}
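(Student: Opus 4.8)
The plan is to deduce all three inclusions from a few explicit identities in the polynomial ring $R=A[x,y,u,v]$. It is convenient to abbreviate $a=xv$ and $b=yu$, so that $x_3=a-b$, $ab=xyuv=x_1x_2$ (hence $a^tb^t=x_1^tx_2^t$), and the third generator of $I'_t$ is $x^tv^t+y^tu^t=a^t+b^t$. For part (1) I would simply record the identities
\[
(xv)^2 = x_3\cdot(xv)+x_1x_2,\qquad (yu)^2 = x_1x_2-x_3\cdot(yu),
\]
each obtained by expanding $x_3=xv-yu$ and cancelling the cross term $xyuv=x_1x_2$; both exhibit $(xv)^2$ and $(yu)^2$ as elements of $I=(x_1,x_2,x_3)$.

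For part (2), since $x_1^{4t}\in(x_1^t)$ and $x_2^{4t}\in(x_2^t)$, the only thing to prove is $x_3^{4t}\in I'_t$. The key observation is that the two elementary symmetric functions of the pair $(a^t,b^t)$, namely $a^t+b^t$ and $a^tb^t=x_1^tx_2^t$, both lie in $I'_t$; since $a^t$ and $b^t$ are roots of $T^2-(a^t+b^t)T+a^tb^t$, it follows that $a^{2t}$ and $b^{2t}$ lie in $I'_t$. Expanding $x_3^{4t}=(a-b)^{4t}$ by the binomial theorem, in every monomial $a^kb^{4t-k}$ one of the exponents $k$, $4t-k$ is at least $2t$, so each term is divisible by $a^{2t}$ or by $b^{2t}$ and hence lies in $I'_t$; therefore $x_3^{4t}\in I'_t$.

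For part (3), again only the generator $x^{12t}v^{12t}+y^{12t}u^{12t}=a^{12t}+b^{12t}$ requires attention. By part (1), $a^2=x_3\cdot(xv)+x_1x_2\in(x_1,x_3)$ and symmetrically $b^2\in(x_1,x_3)$, so $a^{12t}=(a^2)^{6t}$ and $b^{12t}$ both lie in $(x_1,x_3)^{6t}$. But $(x_1,x_3)^{6t}\subseteq(x_1^t,x_3^t)\subseteq I_t$, since in any monomial $x_1^ix_3^j$ with $i+j=6t$ at least one of $i,j$ is $\geq t$. Hence $a^{12t}+b^{12t}\in I_t$, which gives $I'_{12t}\subseteq I_t$.

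The only computations involved are the binomial bookkeeping in (2) and (3), which is entirely routine; the one idea worth isolating is the symmetric-function observation in (2) --- equivalently, that $a^{2t}$ and $b^{2t}$ vanish modulo the resolvent quadratic of $a^t+b^t$ and $a^tb^t$ --- which sidesteps a more direct and much messier expansion of $(a-b)^{4t}$ against the generators of $I'_t$. The same argument in fact yields the sharper inclusion $I'_{4t}\subseteq I_t$, but the bound $12t$ as stated is all that we shall need.
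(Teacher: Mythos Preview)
Your proof is correct and, for parts (1) and (2), follows the same line as the paper: explicit identities expressing $(xv)^2$ and $(yu)^2$ in terms of $x_1,x_2,x_3$, and then the binomial expansion of $x_3^{4t}=(a-b)^{4t}$ once $a^{2t},b^{2t}\in I'_t$ is known (your resolvent-quadratic remark is a clean way to see the latter, which the paper leaves implicit).

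For part (3) your argument is genuinely a bit sharper than the paper's. You exploit that $a^2=x_3\cdot a+x_1x_2$ actually lies in the \emph{two}-generated ideal $(x_1,x_3)$, so $a^{12t}=(a^2)^{6t}\in(x_1,x_3)^{6t}\subseteq(x_1^t,x_3^t)$ by a two-variable pigeonhole, and likewise for $b^{12t}$. The paper instead uses only $a^2,b^2\in I$ together with the three-generator inclusion $I^{3t}\subseteq I_t$ to get $a^{6t},b^{6t}\in I_t$, and then recovers $a^{12t}+b^{12t}\in I_t$ from these. Your refinement is what makes the improved bound $I'_{4t}\subseteq I_t$ fall out; the paper's constant $12$ is an artifact of the three-generator pigeonhole and is not sharp, but is all that the subsequent application requires.
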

\begin{proof}[Proof]
(1). It is clear that we have
\[(xv)^2=x^2_3-yux_3-x_1x_2\in I\ \text{and}\ (yu)^2=x^2_3-xvx_3-x_1x_2\in I.\]
(2). It suffices to show that $x^{4t}_3\in I'_t$. It is wasy to see that $(xv)^{2t},(yu)^{2t}\in I'_t$. Since $x^{4t}_3$ is a linear combination of $(xv)^i(yu)^{4t-i}$ in which at least wither $i\geq 2t$ or $4t-i\geq 2t$, we have that $x^{4t}_3\in I'_t$. \par
(3). It suffices to show that $(xv)^{12t}+(yu)^{12t}\in I_t$. First we note that $I^{3t}\subseteq I_t$ since $I$ is generated by 3 elements. Consequently,
\[(xv)^{6t}=((xv)^2)^{3t}\in I^{3t}\subseteq I_t.\]
And similarly, $(yu)^{6t}\in I_t$. Now we have
\[(xv)^{12t}+(yu)^{12t}=(xv+yu)^{12t}-(xv)^{6t}\alpha-(yu)^{6t}\beta,\]
for some $\alpha,\beta\in R$. Hence, $(xv)^{12t}+(yu)^{12t}\in I_t$. 
\end{proof}

Let $T$ be a commutative Noetherian ring and let $\fa=(a_1,\dots,a_t)$ and $\fb=(b_1,\dots,b_t)$ be ideals of $R$. Let $H^i(\underline{a})$ (respectively, $H^i(\underline{b})$) denote the $(t-i)$-th homology of the Koszul complex associated to $a_1,\dots,a_t$ (respectively, to $b_1,\dots,b_t$). We recall the following result from \cite{StruckradVogelBuchsbaumRings}.

\begin{lem}[Lemma 1.5 in \cite{StruckradVogelBuchsbaumRings}]
Let $T,\fa,\fb,H^i(\underline{a}),H^i(\underline{b})$ as above. If $\fa\subseteq \fb$, then we have $t+1$ commutative diagrams:
\[\xymatrix{
H^i(\underline{b})\ar[r]\ar[d] &H^i(\underline{a})\ar[d]\\
H^i_{\fb}(T)\ar[r] &H^i_{\fa}(T)
}\]
\end{lem}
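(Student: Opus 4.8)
The plan is to realize all four maps of the square as morphisms of Koszul cochain complexes; commutativity then becomes transparent, and the real work will be to check that the maps one builds are the intended canonical ones and are independent of the choices involved.

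First I would fix notation. For a sequence $\underline c=c_{1},\dots,c_{t}$ in $T$, write $K^{\bullet}(\underline c;T)$ for the Koszul cochain complex, $K^{p}(\underline c;T)=\wedge^{p}T^{t}$ with differential $\omega\mapsto e_{\underline c}\wedge\omega$, where $e_{\underline c}=\sum_{j}c_{j}e_{j}\in T^{t}=K^{1}$; by Koszul self-duality $K^{\bullet}(\underline c;T)\cong K_{t-\bullet}(\underline c;T)$, so $H^{p}(K^{\bullet}(\underline c;T))=H^{p}(\underline c)$. Writing $\underline c^{[n]}$ for the sequence $c_{1}^{n},\dots,c_{t}^{n}$, the complexes $K^{\bullet}(\underline c^{[n]};T)$ form a direct system whose transition maps are the exterior powers of the diagonal endomorphism of $T^{t}$ with entries $c_{1},\dots,c_{t}$ and whose direct limit computes $H^{\bullet}_{(\underline c)}(T)$; the canonical map $H^{p}(\underline c)\to H^{p}_{(\underline c)}(T)$ is the one from the $n=1$ term into this limit, and these are the two vertical arrows of the square. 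Now I would use $\fa\subseteq\fb$: choose a $t\times t$ matrix $N$ over $T$ with $a_{j}=\sum_{k}n_{jk}b_{k}$, equivalently $Ne_{\underline b}=e_{\underline a}$. The key point is that $\wedge^{\bullet}N\colon K^{\bullet}(\underline b;T)\to K^{\bullet}(\underline a;T)$ is then a morphism of complexes, since in degree $p+1$ one has $\wedge^{p+1}N(e_{\underline b}\wedge\omega)=(Ne_{\underline b})\wedge\wedge^{p}N(\omega)=e_{\underline a}\wedge\wedge^{p}N(\omega)$; passing to cohomology gives the top arrow $H^{i}(\underline b)\to H^{i}(\underline a)$.

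To produce the bottom arrow together with commutativity I would run the same construction at every level. For each $m$ choose $n=n(m)$ large enough that $\fb^{n}\subseteq(b_{1}^{m},\dots,b_{t}^{m})$ (a pigeonhole argument shows any $n\ge t(m-1)+1$ works); then $a_{j}^{n}\in\fa^{n}\subseteq\fb^{n}\subseteq(b_{1}^{m},\dots,b_{t}^{m})$, so there is a matrix $N_{m}$ with $\underline a^{[n(m)]}=N_{m}\,\underline b^{[m]}$ (as column vectors), hence a morphism $\Phi_{m}\colon K^{\bullet}(\underline b^{[m]};T)\to K^{\bullet}(\underline a^{[n(m)]};T)$ built from $N_{m}$ exactly as above. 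Assembling the $\Phi_{m}$ compatibly with the transition maps of the two systems and passing to the direct limit (the $n(m)$ being cofinal in $\mathbb{N}$) gives a map $H^{i}_{\fb}(T)\to H^{i}_{\fa}(T)$; since the stage $m=1$, $n(1)=1$ of this family is precisely $\wedge^{\bullet}N$, the square commutes for every $i$ with $0\le i\le t$.

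The step I expect to be the main obstacle is exactly ``assembling the $\Phi_{m}$ compatibly''. The matrices $N_{m}$ involve genuine choices; for $m'>m$ the two composites $K^{\bullet}(\underline b^{[m]};T)\to K^{\bullet}(\underline a^{[n(m')]};T)$ agree only up to chain homotopy; and, because the Koszul complex of a non-regular sequence is not a resolution, the usual comparison-theorem argument for uniqueness up to homotopy does not apply directly. Thus the heart of the proof is to verify that the limit map is independent of the choices and coincides with the canonical ``change of support'' map $H^{i}_{\fb}(T)\to H^{i}_{\fa}(T)$ arising from the inclusion $\Gamma_{\fb}\subseteq\Gamma_{\fa}$ of torsion functors --- the standard but somewhat delicate compatibility between the Koszul (or \v{C}ech) description of local cohomology and its description as a derived functor. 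The genuinely new input is only the elementary remark that $Ne_{\underline b}=e_{\underline a}$ forces $\wedge^{\bullet}N$ to be a chain map in the direction the diagram demands; once that and the compatibility are in hand, the $t+1$ squares commute.
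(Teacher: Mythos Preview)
The paper does not give its own proof of this lemma: it is simply quoted as Lemma~1.5 from \cite{StruckradVogelBuchsbaumRings} and used as a black box to obtain the commutative diagram~(\ref{canonicity}). So there is no ``paper's proof'' to compare against.

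That said, your outline is the standard argument and is essentially correct. The point you flag as the main obstacle---that the various $\Phi_m$ are only compatible up to homotopy because the matrices $N_m$ involve choices---is exactly the crux, and it is resolved by the well-known fact (see, e.g., Bruns--Herzog, \emph{Cohen--Macaulay Rings}, Proposition~1.6.9) that any two chain maps $K^\bullet(\underline{b};T)\to K^\bullet(\underline{a};T)$ of the form $\wedge^\bullet N$, $\wedge^\bullet N'$ with $N e_{\underline b}=N' e_{\underline b}=e_{\underline a}$ are chain homotopic. This does \emph{not} require the Koszul complex to be a resolution; it is proved by an explicit homotopy built from the entries of $N-N'$, using only that the Koszul complex is a DG-algebra. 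Once you invoke this, the induced maps on cohomology are independent of all choices, the $\Phi_m$ assemble into an honest map of direct systems on the level of cohomology, and the limit map agrees with the canonical change-of-support map. Your identification of the bottom map with the derived-functor map $H^i_{\fb}(T)\to H^i_{\fa}(T)$ is then immediate from the \v{C}ech description of local cohomology.
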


As a consequence of this lemma, one can see that the map $R/\fa\to H^t_{\fa}(R)$ is canonical in the sense that, if $(a_1,\dots,a_t)=(\alpha_{ij})(b_1,\dots,b_t)$ for a matrix $(\alpha_{ij})$ with $\alpha_{ij}\in T$ (i.e. $\fa\subseteq \fb$) and $\sqrt{\fa}=\sqrt{\fb}$, then we have a commutative diagram
\begin{equation}
\label{canonicity}
\xymatrix{
R/\fb \ar[rr]^{\text{det}(\alpha_{ij})}\ar[dr] & & R/\fa \ar[dl]\\
 & H^t_{\fa}(R) & 
}
 \end{equation}
\begin{thm}
Let $R,I,J,P,Q,\underline{x}$ as above. Then $\fh^3_{\underline{x}}(R)=\underline{\fh}^3_{\underline{x}}(R)=0$.
\end{thm}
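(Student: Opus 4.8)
The plan is to prove directly that $\cL_I(R/I_{(t,t,t)})=O(t^2)$; since $t^2=o(t^3)$ this forces $\fh^3_{\underline{x}}(R)=0$, and the equality $\underline{\fh}^3_{\underline{x}}(R)=0$ then comes for free: $R/(I_{\underline{t}}R)^{\lm}$ is a homomorphic image of $R/I_{\underline{t}}R$, so $\cL_I(R/(I_{\underline{t}}R)^{\lm})\le\cL_I(R/I_{\underline{t}}R)$ by \cite[Proposition 1.1(d)]{quasi-length}, whence $\underline{\fh}^3_{\underline{x}}(R)\le\fh^3_{\underline{x}}(R)$. The structural fact I would exploit throughout is that $I=(xu,\,yv,\,xv-yu)$ is contained in \emph{both} $P=(x,y)$ and $Q=(u,v)$, since every generator visibly lies in each of these ideals. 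In particular any $R$-module killed by $(x,y)$, or by $(u,v)$, is automatically killed by $I$, and $R/(x,y)R$ and $R/(u,v)R$ are cyclic $R$-modules killed by $I$.

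First I would find a large monomial ideal contained in $I_t$, of a special product shape. Set $m=6t$. Then $(x^m,y^m)(u^m,v^m)\subseteq I_t$: its four monomial generators are $x^mu^m=\big((xu)^t\big)^6$, $y^mv^m=\big((yv)^t\big)^6$, $x^mv^m=\big((xv)^2\big)^{3t}$ and $y^mu^m=\big((yu)^2\big)^{3t}$; the first two obviously lie in $I_t$, while $(xv)^2,(yu)^2\in I$ by Proposition \ref{comparison}(1) and $I^{3t}\subseteq I_t$ because $I$ is generated by $3$ elements, so the last two lie in $I^{3t}\subseteq I_t$. Hence $R/I_t$ is a homomorphic image of $R/\big((x^m,y^m)(u^m,v^m)\big)$, and by \cite[Proposition 1.1(d)]{quasi-length} it suffices to bound $\cL_I$ of the latter.

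Writing $\fa=(x^m,y^m)R$ and $\fb=(u^m,v^m)R$, which are monomial ideals in disjoint sets of variables, one has $\fa\fb=\fa\cap\fb$. From the short exact sequence $0\to\fa/(\fa\cap\fb)\to R/(\fa\cap\fb)\to R/\fa\to 0$, together with the fact that $\fa/(\fa\cap\fb)=\fa/\fa\fb\cong\fa\otimes_R(R/\fb)$ is a homomorphic image of $(R/\fb)^2$ (because $\fa$ is $2$-generated), subadditivity of $\cL_I$ on short exact sequences \cite[Proposition 1.1(d)]{quasi-length} gives $\cL_I\big(R/(\fa\cap\fb)\big)\le\cL_I(R/\fa)+2\,\cL_I(R/\fb)$. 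It remains to bound $\cL_I(R/(x^m,y^m))\le m^2$, and symmetrically $\cL_I(R/(u^m,v^m))\le m^2$. For this I would filter $R/(x^m,y^m)$ by the images of the powers $(x,y)^j$; the filtration terminates since $(x,y)^{2m-1}\subseteq(x^m,y^m)$, each graded piece is killed by $(x,y)$ and hence by $I$, and as a module over $R/(x,y)R\cong A[u,v]$ it is free of rank $\beta_j=\#\{(a,b)\in\mathbb N^2:\,a+b=j,\ a,b<m\}$, i.e. a direct sum of $\beta_j$ copies of the cyclic $I$-killed module $R/(x,y)R$, so its $I$-quasilength is $\le\beta_j$; summing over $j$ gives $\sum_j\beta_j=m^2$. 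With $m=6t$ this yields $\cL_I(R/I_{(t,t,t)})\le 3m^2=108\,t^2$, hence $\fh^3_{\underline{x}}(R)=0$ and therefore $\underline{\fh}^3_{\underline{x}}(R)=\fh^3_{\underline{x}}(R)=0$.

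I expect the crux to be the first step: recognizing that the intractable ideals $I_t$ can be squeezed below by a monomial ideal of the product form $(x^m,y^m)(u^m,v^m)$ — this is exactly where the identities $(xv)^2,(yu)^2\in I$ of Proposition \ref{comparison}(1) are needed — and recognizing that this product form, combined with $I\subseteq(x,y)$ and $I\subseteq(u,v)$, forces only quadratic growth of $\cL_I$: each of the two ``sheets'' $V(P),V(Q)$ carries a module that, after passage to an associated graded, splits as a module supported on and built from the two variables cutting out the sheet (which accounts for all of the quasilength) tensored over $A$ with a polynomial ring in the other two variables (which contributes nothing to $\cL_I$). Everything after that is routine manipulation with \cite[Proposition 1.1(d)]{quasi-length}.
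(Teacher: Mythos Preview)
Your argument is correct and is genuinely different from, and considerably more elementary than, the paper's. The paper proceeds in two stages: first it proves $\underline{\fh}^3_{\underline{x}}(R)=0$ by identifying $H^3_I(R)\cong H^4_{(x,y,u,v)}(R)$ via Mayer--Vietoris and then using the scaling endomorphism $F_t\colon x\mapsto x^t,\dots,v\mapsto v^t$ to transport a single annihilation statement to all scales; second, to get $\fh^3_{\underline{x}}(R)=0$ (not just the underlined version), it builds an explicit, somewhat delicate $I$-filtration of $R/(I'_{s+t},(xu)^s(yv)^s(x^sv^s+y^su^s))$ with $(s+t)^3-t^3$ factors, combines this with the $\underline{\fh}$ bound, and then iterates $F_n$ to drive a ratio $\epsilon<1$ to zero. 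Your approach bypasses all of this by observing that the single containment $(x^{6t},y^{6t})(u^{6t},v^{6t})\subseteq I_t$ (which uses only Proposition~\ref{comparison}(1) and $I^{3t}\subseteq I_t$) already gives a module whose $I$-quasilength is visibly $O(t^2)$, because $I\subseteq(x,y)$ and $I\subseteq(u,v)$ make the cyclic factors of the $(x,y)$-adic (resp.\ $(u,v)$-adic) filtration automatically $I$-admissible. What the paper's argument buys is a closer analysis of the image of $R/I'_t$ inside $H^3_I(R)$ and an explicit link to paraclasses, which may be of independent interest; what your argument buys is a direct $\cL_I(R/I_t)\le 108\,t^2$ bound with no cohomological input, no use of $F_t$, and no separate treatment of $\fh$ versus $\underline{\fh}$.
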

\begin{proof}[Proof] 
It is clear that $ \varinjlim_tR/I_t=H^3_I(R)=H^3_J(R)$, where the last equality holds since $J$ is the radical of $I$. We also note that, by the Mayer-Vietoris sequence
\[\cdots\to 0=H^3_P(R)\oplus H^3_Q(R)\to H^3_J(R)\to H^4_{P+Q}(R)\to H^4_P(R)\oplus H^4_Q(R)=0\to\cdots,\]
we have $H^3_J(R)=H^4_{(x,y,u,v)}(R)$. Let $\fm=(x,y,u,v)$ and $\fm_t=(x^t,y^t,u^t,v^t)$ for $t\in \mathbb{N}$. \par

Since every element of $H^4_{\fm}(R)$ is killed by $\fm_t$ for some suitable $t$, we know that the image of $R/(xu,yv,xv+yu)$ in $H^3_I(R)=H^3_j(R)=H^4_{\fm}(R)$ is killed by $\fm_c$ for some $c\in \mathbb{N}$.\par
Let $F_t:R\to R$ be the endomorphism that sends $x$ ($y,u,v$, respectively) to $x^t$ ($y^t,u^t,v^t$, respectively) and is the  identity on $A$. Applying $F_t$ to the map 
$$
R/(xu,yv,xv+yu)\to H^4_{\fm}(R),
$$
we see that the image of $R/I'_t$ in $H^4_{\fm_t}(R)=H^4_{\fm}(R)$ is killed by $\fm_{ct}$.\par
Since we always have a surjection $R/P_{ct}\to R/(P_{ct}+Q_{ct})=R/\fm_{ct}$, we have that 
\[\cL_J(R/\fm_{ct})\leq \cL_J(R/P_{ct})).\]
Moreover we have that 
\[\cL_J(R/P_{ct})\leq \cL_P(R/P_{ct})=c^2t^2\]
since $J\subset P$. Therefore, we have that 
\[\cL_J(R/\fm_{ct})\leq c^2t^2,\]
and hence
\[\cL_J(\im(R/I'_t\to H^4_{\fm}(R)))\leq c^2t^2.\]
Since $I'_{12t}\subseteq I_t$, there is a surjection $\im(R/I'_{12t}\to H^4_{\fm}(R))\to \im(R/I_t\to H^4_{\fm}(R))$. Hence
\begin{align}\cL_I(\im(R/I_t\to H^4_{\fm}(R)))&\leq \cL_J(\im(R/I_t\to H^4_{\fm}(R)))\notag\\
&\leq \cL_J(\im(R/I'_{12t}\to H^4_{\fm}(R)))\notag\\
&\leq c^2(12t)^2=144c^2t^2,\notag
\end{align}
where the first inequality holds since $I\subset J$.\par
As discussed in \S\ref{prel}, $\im(R/I_t\to H^3_I(R)=H^4_{\fm}(R))$ is isomorphic to $R/(I_t)^{\lm}$. Therefore,
\[\underline{\fh}^3_{\underline{x}}(R)\leq \lim_{t\to \infty}\frac{144c^2t^2}{t^3}=0.\]
To finish the proof of our theorem, we will show that $\fh^3_{\underline{x}}(R)$ is also 0\footnote{Note that $\underline{\fh}^3_{\underline{x}}(R)=0$ already implies that $x_1,x_2,x_3$ does not form a $\Q$-sequence in $R$ by \cite[Theorem 3.8]{quasi-length}.}, i.e., we will prove that
\[\lim_{t\to\infty}\inf\{\frac{\cL_I(R/I_t)}{t^3}\}=0.\] 
For any $s\geq t$, we have that
\begin{align} &\ ((xu)^{t+s},(yv)^{t+s},x^{t+s}v^{t+s}+y^{t+s}u^{t+s})\notag\\
&=((xu)^t,(yv)^t,x^tv^t+y^tu^t)\begin{pmatrix}(xu)^s&0&-y^su^{s-t}v^t\\0&(yv)^s&-x^su^tv^{s-t}\\0&0&x^{s}v^{s}+y^{s}u^{s}\end{pmatrix}\notag
\end{align}
Hence by the commutative diagram (\ref{canonicity}), we have that $\im(R/I'_t\to H^3_{I}(R))$ is $H_t$, the cyclic module generated by the image of $(xu)^s(yv)^s(x^sv^s+y^su^s)$ in $H^3_{I}(R)$ through $R/I'_{s+t}\to H^3_I(R)$. For $s\gg 0$, 
one has that $H_t$ is isomorphic to $H'_{s+t}$, the submodule of $R/I'_{s+t}$ generated by $(xu)^s(yv)^s(x^sv^s+y^su^s)$. Therefore, we have 
\[\cL_J(H'_{s+t})=\cL_J(H_t)=\cL_J(\im(R/I'_t\to H^3_{I}(R)))\leq c^2t^2.\]
Next we want to get a upper bound for the quasilength of 
\[R/(I'_{s+t},(xu)^s(yv)^s(x^sv^s+y^su^s))\cong \frac{R/I'_{s+t}}{H'_{s+t}}.\]
More precisely, we wish to show that 
\[\cL_I(R/(I'_{s+t},(xu)^s(yv)^s(x^sv^s+y^su^s)))\leq (t+s)^3-t^3.\]
To this end we will construct an $I$-filtration of $R/(I'_{s+t},(xu)^s(yv)^s(x^sv^s+y^su^s))$ with $(s+t)^3-t^3$ factors. 
First we linearly order the elements in
\[\cS:=\{(xu)^{e_1}(yv)^{e_2}(x^{e_3}v^{e_3}+y^{e_3}u^{e_3})|0\leq e_1,e_2,e_3\leq t+s-1\}\]
as follows: we set $$
(xu)^{e_1}(yv)^{e_2}(x^{e_3}v^{e_3}+y^{e_3}u^{e_3})\succeq (xu)^{e'_1}(yv)^{e'_2}(x^{e'_3}v^{e'_3}+y^{e'_3}u^{e'_3}),$$ 
if $e_1+e_2>e'_1+e'_2$, or $e_1+e_2=e'_1+e'_2$ and $e_1>e'_1$, or $e_1=e'_1$ and $e_2=e'_2$ and $e_3\geq e'_3$.  
Now we remove all elements $(xu)^{e_1}(yv)^{e_2}(x^{e_3}v^{e_3}+y^{e_3}u^{e_3})$ with $e_1,e_2,e_3\geq s$ from $\cS$. Hence according to our order, the remaining $(t+s)^3-t^3$ elements can be listed as
\begin{equation}
\label{remaining-elements}
(xu)^{t+s-1}(yv)^{t+s-1}(xv+yu)^{s-1}, (xu)^{t+s-1}(yv)^{t+s-1}(xv+yu)^{s-2},\cdots
\end{equation}
Let $M_h$ be the submodule of $R/(I'_{s+t},(xu)^s(yv)^s(x^sv^s+y^su^s))$ generated by the first $h$ elements 
in (\ref{remaining-elements}). To see that the $M_h$ give an $I$-filtration of 
$$
R/(I'_{s+t},(xu)^s(yv)^s(x^sv^s+y^su^s)),$$ 
it suffices to see that $IM_{h}\subseteq M_{h-1}$. It is clear that $(xu,yv)M_{h}\subseteq M_{h-1}$. It remains to prove that $(xv+yu)M_h\subseteq M_{h-1}$. Let \[g(e_1,e_2,e_3)=(xu)^{e_1}(yv)^{e_2}(x^{e_3}v^{e_3}+y^{e_3}u^{e_3}).\] Assume that $g(e_1,e_2,e_3)\in M_h\backslash M_{h-1}$. If $e_3=0$, then it is clear that $$
(xv+yu)g(e_1,e_2,0)=g(e_1,e_2,1)\in M_{h-1}.
$$ If $0<e_3<s-1$, then 
\begin{align}
&\ (xv+yu)g(e_1,e_2,e_3)\notag\\
 &=(xu)^{e_1}(yv)^{e_2}(x^{e_3+1}v^{e_3+1}+y^{e_3+1}u^{e_3+1})+(xu)^{e_1}(yv)^{e_2}(xvy^{e_3}u^{e_3}+yux^{e_3}v^{e_3})\notag\\
&=g(e_1,e_2,e_3+1)+(xu)^{e_1+1}(yv)^{e_2+1}(y^{e_3-1}u^{e_3-1}+x^{e_3-1}v^{e_3-1})\in M_{h-1}.\notag
\end{align}
When $e_3=s-1$, a similar calculation shows that
\begin{align}
(xv+yu)g(e_1,e_2,s-1)&=g(e_1,e_2,s)+(xu)^{e_1+1}(yv)^{e_2+1}(y^{s-2}u^{s-2}+x^{s-2}v^{s-2})\notag\\
&=g(e_1,e_2,s)+g(e_1+1,e_2+1,s-2)\notag
\end{align} 
When $e_1,e_2\geq s$, we know that $g(e_1,e_2,s)=0$ in $R/(I'_{s+t},(xu)^s(yv)^s(x^sv^s+y^su^s))$; when one of $e_1$ and $e_2$ is less than $s$, we have that $g(e_1,e_2,s)\in M_{h-1}$. And it is clear that $g(e_1+1,e_2+1,s-2)\in M_{h-1}$. Hence $(xv+yu)g(e_1,e_2,s-1)\in M_{h-1}$. This finishes the proof that $\cL_I(R/(I'_{s+t},(xu)^s(yv)^s(x^sv^s+y^su^s)))\leq (t+s)^3-t^3$.\par
By the short exact sequence
\[0\to H'_{s+t}\to R/I'_{s+t}\to R/(I'_{s+t},(xu)^s(yv)^s(x^sv^s+y^su^s))\to 0,\]
we have that 
\[\cL_I(R/I'_{s+t})\leq \cL_I(H'_{s+t})+\cL_I(R/(I'_{s+t},(xu)^s(yv)^s(x^sv^s+y^su^s)))\leq c^2t^2+(s+t)^3-t^3.\]
Therefore, for $n\gg 0$, we have $\cL_I(R/I'_n)<n^3$. We may choose such an integer $n$ that is divisible by 12, i.e., $n=12n_0$. We can write $\cL_I(R/I'_n)=\epsilon n^3$ with $0\leq \epsilon<1$. After applying $F_n$ repeatedly, we have that $\cL_I(R/I'_{n^l})\leq \epsilon^ln^{3l}$. Now according to Proposition \ref{comparison}(3), we have
\[\cL_I(R/I_{12^{l-1}n^l_0})\leq \cL_I(R/I'_{12^ln^l_0})=\cL_I(R/I'_{n^l})\leq \epsilon^ln^{3l}\]
and hence
\[\fh^3_{\underline{x}}(R)\leq \frac{\cL_I(R/I_{12^{l-1}n^l_0})}{(12^{l-1}n^l_0)^3}\leq \frac{\epsilon^ln^{3l}}{(12^{l-1}n^l_0)^3}=12^3\epsilon^l\]
for all $l\geq 1$. Therefore,
\[\fh^3_{\underline{x}}(R)=0.\]
\end{proof}

\section{Robust Closure}\label{robcl}   
\begin{defn}\label{defforce} We beginning by recalling the definitions of forcing algebras and generic forcing algebras for a triple ($M,N,u$) (\cite[4.3]{solid-closure}) where $N\subseteq M$ are finitely generated modules over a Noetherian ring $R$ and $u$ is an element of $M$. An $R$-algebra $S$ is called a {\it forcing algebra} for the triple $(M,N,u)$ if the image $1\otimes u$ of $u$ in $S\otimes_RM$ is in $\im(S\otimes_RN\to S\otimes_RM)$. Let $R^{\beta}\xrightarrow{A}R^{\alpha}\to M/N\to 0$ be a finite presentation of $M/N$, where $A=(a_{ij})$ is an $\alpha\times \beta$ matrix over $R$. Let $r=(r_1,\dots,r_{\alpha})\in R^{\alpha}$ represent $\bar{u}\in R^{\alpha}/\im(A)\cong M/N$. Let $Z_1,\dots,Z_{\beta}$ be indeterminates over $R$. Then 
\[\frac{R[Z_1,\dots,Z_{\beta}]}{(r_i-\sum^{\beta}_{j=1}a_{ij}Z_j: 1 \leq i \leq \alpha).}\]
is called a {\it  generic forcing algebra} for the triple $(M,N,u)$ for the data $A,r$. It is clear that the image $1\otimes u$ of $u$ in $S\otimes_RM$ is contained in $\im(S\otimes_RN\to S\otimes_RM)$. Given any other forcing algebra $S'$ for the triple $(M,N,u)$, there is a ring homomorphism $S\to S'$; this justifies the word `generic.'\end{defn}
\begin{rem}\label{remforce}  It is important to note that a forcing algebra (respectively, a generic forcing algebra) for $(M,N,u)$ is the same
as a forcing algebra (respectively, a generic forcing algebra) for $(M/N, 0, \overline{u})$, 
 where $\overline{u}$ is the image of $u$ in  $M/N$.
 
 Note also that if $S$ is a forcing algebra for $(M,N,u)$ and $S \to T$ is an $R$-algebra homomorphism, then $T$ is also a forcing
 algebra for $(M,N,u)$.
 
 Furthermore, note that if $N \inc N' \inc M \inc M'$ are finitely generated $R$-modules and $u \in M$,  then a forcing algebra 
for $(M, N, u)$ is a forcing algebra for $(M',N',u)$,  since we have an $R$-linear map $M/N \to M'/N'$
that sends the image of $u$ in the first module to the image of $u$ in the second module. The case where $M = M'$ and the
case where $N = N'$ are of particular interest.
 \end{rem}

The following proposition (\cite[Proposition 4.6]{solid-closure}) will be useful to us.
\begin{prop}
\label{two-generic-forcing-algebras}
If $R$ is a Noetherian ring, $N\subseteq M$ are finitely generated $R$-modules, $u\in M$, and $S,T$ are two generic forcing algebras for $(M,N,u)$ for possibly different data, then there are finite sets of indeterminates \textbf{Y, \textbf{Z}} such that $S[\textbf{Y}]=T[\textbf{Z}]$ as $R$-algebras.
\end{prop}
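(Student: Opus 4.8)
The plan is to follow how the generic forcing algebra is affected by changes in its defining data. By Remark~\ref{remforce} we may work with the triple $(Q,0,\bar u)$, where $Q=M/N$ and $\bar u$ is the image of $u$; the relevant data is then a finite free presentation $R^{\beta}\xrightarrow{A}R^{\alpha}\xrightarrow{\pi}Q\to 0$ together with a vector $r\in R^{\alpha}$ lifting $\bar u$, the associated generic forcing algebra being $R[Z_1,\dots,Z_\beta]/(r_i-\sum_j a_{ij}Z_j : 1\le i\le\alpha)$. Thus $S$ and $T$ arise from two such data $(A_1,r_1)$, with surjection $\pi_1\colon R^{\alpha_1}\twoheadrightarrow Q$, and $(A_2,r_2)$, with $\pi_2\colon R^{\alpha_2}\twoheadrightarrow Q$, and we must produce finite sets of indeterminates making $S[\mathbf Y]$ and $T[\mathbf Z]$ isomorphic as $R$-algebras.

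First I would reduce to the case of a common target and a common surjection. Choosing $R$-linear lifts $\sigma\colon R^{\alpha_2}\to R^{\alpha_1}$ of $\pi_2$ and $\tau\colon R^{\alpha_1}\to R^{\alpha_2}$ of $\pi_1$ (possible since the $R^{\alpha_i}$ are free), one checks that
\[\left(\begin{pmatrix}A_1&\sigma\\0&-I_{\alpha_2}\end{pmatrix},\,(r_1,0)\right)\quad\text{and}\quad\left(\begin{pmatrix}-I_{\alpha_1}&0\\\tau&A_2\end{pmatrix},\,(0,r_2)\right)\]
are finite presentations of $Q$ with the common target $R^{\alpha_1}\oplus R^{\alpha_2}$ and the common surjection $\pi(v,w)=\pi_1 v+\pi_2 w$, and that the displayed vectors do lift $\bar u$. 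Passing from $(A_i,r_i)$ to these enlarged data amounts to adjoining redundant generators, each together with the relation pinning it down; a direct computation shows that the new relations can be solved for the new variables, so the generic forcing algebra is merely replaced by an isomorphic one. Hence we may assume both data have the same target $R^{\alpha}$ and surjection $\pi$; call them $(A,r)$ and $(B,r')$, with forcing algebras isomorphic to $S$ and to $T$.

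Next, since $\pi$ is fixed, $r-r'\in\ker\pi=\im A$; writing $r-r'=Ac$, the substitution $Z\mapsto Z+c$ is an $R$-algebra isomorphism between the forcing algebras of $(A,r)$ and $(A,r')$, so we may take $r=r'$. Finally $\im A=\ker\pi=\im B$, so $B=AP$ and $A=BP'$ for matrices $P,P'$ over $R$; hence appending suitably many columns of zeros to $A$ and to $B$ yields matrices of the same size that are carried one to the other by invertible column operations, passing through $(A\mid B)$. On forcing algebras an invertible column operation induces a linear change of the $Z$-variables, hence an isomorphism, while appending a column of zeros adjoins one free indeterminate. Chaining the isomorphisms and polynomial extensions accumulated along the way gives $S[\mathbf Y]\cong T[\mathbf Z]$ for suitable finite $\mathbf Y,\mathbf Z$.

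The main obstacle is the bookkeeping: at each modification — adjoining a redundant generator with its relation, translating the lift, performing an invertible column operation, appending a zero column — one must confirm both that the presented module stays $Q$ with $r$ lifting $\bar u$, and that the forcing algebra changes only up to isomorphism or adjunction of indeterminates. The step most demanding of care is the reduction to a common target, where one must verify that the enlarged matrices genuinely present $Q$ via the stated surjection and that their new defining relations can indeed be solved for the new variables.
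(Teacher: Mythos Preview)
The paper does not supply its own proof of this proposition; it simply quotes it as \cite[Proposition~4.6]{solid-closure}. Your argument is correct and is essentially the standard one: reduce to a common free cover of $Q=M/N$ by enlarging each presentation (your block matrices do present $Q$ via the common surjection $\pi(v,w)=\pi_1v+\pi_2w$, and the extra relations eliminate the new variables, leaving the forcing algebra unchanged up to isomorphism); translate to equalize the lift of $\bar u$; and finally, since the two matrices now have the same image, pad each with zero columns and pass through $(A\mid B)$ by the invertible block-triangular column operations $\bigl(\begin{smallmatrix}I&P\\0&I\end{smallmatrix}\bigr)$ and $\bigl(\begin{smallmatrix}I&0\\-P'&I\end{smallmatrix}\bigr)$ coming from $B=AP$ and $A=BP'$. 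Each modification changes the generic forcing algebra only by an $R$-algebra isomorphism or by adjunction of finitely many free indeterminates, which is exactly the claim.
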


For any Noetherian ring $R$, following \cite[3.1]{solid-closure}, we refer to the $R$-algebra obtained by completing $R$ localized at a maximal ideal and then killing a minimal prime as a {\it complete local domain} of $R$. Note that if $R$ is local and $\fp$ is a minimal prime of $R$, then
every minimal prime of $\fp \widehat{R}$ is a minimal prime of  $(0)$ in $\widehat{R}$ lying over $\fp$.  (If $a \notin \fp$ it is a nonzerodivisor
on $R/\fp$ and, hence, of the flat $(R/\fp)$-algebra $\widehat{R}/\fp \widehat{R}$.  Thus $\fq$ lies over $\fp$.  Moreover, we can
choose $a \in R-\fp$ such that  $a\fp^n = 0$ for some integer $n \geq 1$.  When we localize at $\fq$,  $\fp$ therefore becomes
nilpotent, and $\fq$ becomes nilpotent modulo $\fp \widehat{R}$.)  It follows that the set of complete local domains $D$ of the local
ring $R$ is the same as the set of complete local domains of the $R/\fp$ for $\fp$ a minimal prime of $R$:  the map  $R \to \widehat{R} \to D$ factors uniquely  $R \to \widehat{R} \to \widehat{R}/\fp R \to \widehat{R}/\fq = D$ where $\fq$ is a minimal prime both of $\fp\widehat{R}$ and of $(0)$ in 
$\widehat{R}$.

Next we collect the definition and some basic facts of solid closure.
\begin{defn}[Definition 1.1 in \cite{solid-closure}]
If $R$ is a domain, we shall say that an $R$-module $M$ is {\it solid} if $\Hom_R(M,R)\neq 0$. 
We shall say that an $R$-algebra $S$ is {\it solid} if it is solid as an $R$-module. 
\end{defn}
\begin{rem}\label{solidrem} $S$ is solid as an $R$-algebra if
and only if there is an $R$-module map $\theta: S \to R$ such that $\theta(1) \not= 0$.  (If $\theta(s_0) \not=0$,
one may replace $\theta$ by $\theta'$ where $\theta'(s) := \theta(s_0s)$.)
A module-finite extension $S$ of a Noetherian domain $R$ is always a solid $R$-algebra (\cite[Proposition 2.1(i)]{solid-closure}). If $(R,\fm)$ is a complete local domain of Krull dimension $d$, then an $R$-module is solid if and only 
if $H^d_{\fm}(M)\neq 0$ (\cite[Corollary 2.4]{solid-closure}).\end{rem}
\begin{defn}[Definition 5.1 in \cite{solid-closure}]\label{defsolcl}
Let $R$ be a Noetherian ring, let $N\subseteq M$ be finitely generated $R$-modules, and let $u\in M$. If $R$ is a complete local domain we say that $u$ is in the {\it solid closure} $N^{\bigstar}$ of $N$ in $M$ over $R$ if there is a solid $R$-algebra $S$ such that the image $1\otimes u$ of $u$ in $S\otimes_RM$ is in $\im(S\otimes_RN\to S\otimes_RM)$. In other words, $u\in N^{\bigstar}$ if and only if $(M,N,u)$ has a forcing algebra $S$ that is solid as an $R$-algebra.  If there is as a solid forcing algebra, then the generic forcing algebra is solid.\par
In the general case, we say that $x$ is in the solid closure $N^{\bigstar}$ of $N$ in $M$ over $R$ is for every comlete local domain $D$ of $R$, the image of $x$ in $D\otimes_RM$ is in the solid closure of $\im(D\otimes_RN\to B\otimes_RM)$ in $D\otimes_RM$ over $B$. In other words, every complete local domain $D$ of $R$ has a solid $D$-algebra $S$ such that the image of $x$ in $S\otimes_Rm$ is in $\im(S\otimes_RN\to S\otimes_RM)$.  
\end{defn}
The following facts about solid closure will be used in the sequel:
\begin{enumerate}
\item Let $R$ be a Noetherian ring and $I$ an ideal of $R$. Then $I^{\bigstar}\subseteq \bar{I}$. Moreover if $I$ is principal then $I^{\bigstar}=\bar{I}$ (\cite[Theorem 5.10]{{solid-closure}}).
\item If $R$ is a Noetherian ring of positive characteristic and admits a completely stable test element and $N\subseteq M$ are finitely generated $R$-modules, then $N^*_M=N^{\bigstar}_M$. In particular, this is true in positive characteristic if $R$ is reduced and essentially of finite type over an 
excellent semilocal ring, and, in particular, it is true in positive characteristic for every complete local domain (\cite[Theorem 8.6]{{solid-closure}}).
\end{enumerate}

Recall the following definition from \cite{BrennerParasolidClosure}.
\begin{defn}[\cite{BrennerParasolidClosure}]
Let $(A,\fm)$ be a $d$-dimensional Noetherian commutative local ring. A {\it paraclass} in $H^d_{\fm}(A)$ is the cohomology class $[\frac{1}{x_1\dots x_d}]\in H^d_{\fm}(A)$ where $x_1,\dots,x_d$ is a system of parameters of $A$ and we think of $H^d_{\fm}(A)$ as the quotient of $A_{x_1 \cdots x_d}$
by $\sum_{i=1}^d A_{f_i}$ where $f_i = \prod_{j\not=i}x_j$.  

Let $R$ be an $A$-algebra. $R$ is called a {\it parasolid} $A$-algebra if the image of each non-zero paraclass $c\in H^d_{\fm}(A)$ in $H^d_{\fm R}(R)$ does not vanish.

More generally, let $A$ be a Noetherian commutative ring and let $R$ be an $A$-algebra. $R$ is called a {\it parasolid} 
$A$-algebra if $R_{\fm}$ is parasolid over $A_{\fm}$ for each maximal ideal $\fm$ of $A$.
\end{defn}
\begin{rem}\label{splitpar} It is clear that a parasolid algebra over a complete local domain is always solid from the local cohomology 
characterization of being solid given in Remark~\ref{solidrem}.  Note also that if $A \to R$ is split or even pure as a
map of $A$-modules, then $R$ is parasolid over $A$:  the condition of being split or pure is preserved by base
change to a complete local domain of $A$.  But when  $(A,\fm)$ is a complete local domain the map of 
local cohomology $H^d_{\fm}(A) \to H^d_{\fm}(R)$ is injective.
\end{rem}
\begin{defn}
\label{defn-robust-algebra}
Let $R$ be a complete local domain. An $R$-algebra $S$ is called {\it robust} if the image of every system of parameters for $R$ in $S$ forms a $\Q$-sequence in $S$.\par
In general, for any Noetherian ring $R$, an $R$-algebra $S$ is called {\it robust} if 
$D\otimes_RA$ is a robust $D$-algebra for every complete local domain $D$ of $R$. 
\end{defn}

Note that a robust algebra $S$ over a complete local domain $(R,\,\fm)$ of dimension $d$ is solid, since even the fact that one system of parameters
$\ux = x_1,\, \ldots, \, x_d$ is a Q-sequence on $S$ implies that $H_{(\ux)}^d(S) = H^d_{\fm}(S) \not= 0$.
We make the following conjecture:
\begin{conj}\label{robcj}  Every system of parameters for every local ring is a Q-sequence.  Hence, every Noetherian ring
is robust as an algebra over itself. \end{conj}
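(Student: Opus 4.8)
As the statement is a conjecture, what follows is a strategy rather than a complete argument; it reduces matters to a Cohen--Macaulay situation, which is easy, and isolates the non-Cohen--Macaulay case in mixed characteristic as the essential difficulty.

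First I would make the standard reductions. The second sentence follows from the first together with the definition of robustness: for a Noetherian ring $R$, robustness of $R$ as an algebra over itself means that $D = D\otimes_R R$ is robust over $D$ for every complete local domain $D$ of $R$, which is exactly the assertion that every system of parameters of $D$ is a $\Q$-sequence. For the first sentence, let $(R,\fm)$ be local of dimension $d$, let $\underline{x} = x_1,\dots,x_d$ be a system of parameters, and put $I = (\underline{x})$; since $\fh^d_{\underline{x}}(R)\leq 1$ by \cite[Proposition 1.2(a)]{quasi-length}, it remains to show $\fh^d_{\underline{x}}(R)\geq 1$, i.e.\ that $\cL_I(R/I_{\underline{t}})$ grows at least like $t_1\cdots t_d$. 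Because $R/I_{\underline{t}}$ has finite length it is unchanged by completion, so $\fh^d_{\underline{x}}(R) = \fh^d_{\underline{x}}(\widehat{R})$; and by \cite[Proposition 2.5]{quasi-length}, $\fh^d_{\underline{x}}(\widehat{R})\geq \fh^d_{\underline{y}}(\widehat{R}/P)$ for a minimal prime $P$ of $\widehat R$ with $\dim \widehat R/P = d$, where $\underline{y}$ is the image of $\underline{x}$, a system of parameters of the complete local domain $\widehat R/P$. Thus it suffices to prove $\fh^d_{\underline{x}}(R)\geq 1$ when $R$ is a complete local domain, and by Proposition~\ref{content-free} with $n = 1$ (that is, \cite[Theorem 4.7]{quasi-length}) this is already known in equal characteristic, so only mixed characteristic $p > 0$ remains.

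Next I would dispose of the Cohen--Macaulay case, which is characteristic-free. If $R$ is Cohen--Macaulay, then $x_1,\dots,x_d$, and hence $x_1^{t_1},\dots,x_d^{t_d}$, is a regular sequence, so $\lambda(R/I_{\underline{t}}) = t_1\cdots t_d\cdot\lambda(R/I)$. Every factor in a quasilength filtration of $R/I_{\underline{t}}$ is a homomorphic image of $R/I$, hence of length $\leq \lambda(R/I)$, so any such filtration has at least $t_1\cdots t_d$ factors; thus $\cL_I(R/I_{\underline{t}}) = t_1\cdots t_d$ and $\fh^d_{\underline{x}}(R) = 1$. So the remaining task is the non-Cohen--Macaulay, mixed characteristic case, and there the natural idea is to pass to a balanced big Cohen--Macaulay $R$-algebra $B$ (which exists by the theorem of Andr\'e), so that $x_1,\dots,x_d$ is a regular sequence on $B$ and $B/IB\neq 0$. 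Applying $B\otimes_R -$ to a length-$n$ quasilength filtration of $R/I_{\underline{t}}$ and taking images in $B/I_{\underline{t}}B$ yields, by right exactness, a filtration of $B/I_{\underline{t}}B$ of length $\leq n$ whose factors are cyclic $B$-modules killed by $IB$; hence $\cL_{IB}(B/I_{\underline{t}}B)\leq \cL_I(R/I_{\underline{t}})$. It would therefore be enough to show that $\cL_{IB}(B/I_{\underline{t}}B)\geq t_1\cdots t_d$ for all $\underline{t}$ when $x_1,\dots,x_d$ is a regular sequence on $B$ with $B/IB\neq 0$.

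This last point is the heart of the matter and the expected main obstacle: $B$ is not Noetherian, so the length/multiplicity argument of the Cohen--Macaulay case is unavailable, and the only elementary estimate is far too weak --- a length-$n$ filtration as above forces $I^nB\subseteq I_{\underline{t}}B$, and since $\operatorname{gr}_{IB}(B)$ is a polynomial ring over $B/IB$ this only gives $n\geq \sum_i(t_i-1)+1$, linear in the $t_i$ rather than the product $t_1\cdots t_d$. Closing this gap seems to require either a Frobenius-style device for ``powering up'' filtrations (exactly the mechanism driving the equal characteristic $p$ proof, and which one might try to recover in mixed characteristic through tilting/perfectoid constructions applied to $R^+$ or to a perfectoid big Cohen--Macaulay algebra), or a ``$\lim$ Cohen--Macaulay'' approximation of $B$ by finitely generated $R$-modules $M_\nu$ with $\lambda(M_\nu/IM_\nu)$ and $\lambda(M_\nu/I_{\underline{t}}M_\nu)$ controlled tightly enough that the Cohen--Macaulay computation runs up to an error negligible after dividing by $t_1\cdots t_d$; in dimension at most $2$, where small finitely generated big Cohen--Macaulay modules exist, this second route already succeeds, but in higher mixed characteristic dimension no such modules are known, which is presumably why the conjecture remains open.
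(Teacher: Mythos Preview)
The statement is a conjecture, and the paper offers no proof of it; the paragraph following Conjecture~\ref{robcj} only records the standard reductions (to complete local domains, with an optional pass to the normalization) and the known cases (equal characteristic via \cite[Theorem~4.1]{quasi-length}, and dimension at most~2 because the normalized complete local domain is then Cohen--Macaulay, so \cite[Proposition~1.2(c)]{quasi-length} applies). Your reductions---completing, killing a minimal prime of maximal dimension via \cite[Proposition~2.5]{quasi-length}, and invoking the equal characteristic result---are the same in substance, and your direct treatment of the Cohen--Macaulay case via the length identity $\lambda(R/I_{\underline{t}})=t_1\cdots t_d\,\lambda(R/I)$ is equivalent to what the paper cites.

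Where you go beyond the paper is in sketching a big Cohen--Macaulay algebra approach for the mixed characteristic case and isolating the obstruction. Your tensor argument showing $\cL_{IB}(B/I_{\underline{t}}B)\leq \cL_I(R/I_{\underline{t}})$ is correct, as is the diagnosis that the only elementary lower bound available on the $B$-side (from $I^nB\subseteq I_{\underline{t}}B$ and the polynomial associated graded) is linear rather than multiplicative in the $t_i$. This is a sound and well-informed outline of where the difficulty lies; the paper itself makes no such attempt and simply leaves mixed characteristic open. So your proposal is not a proof---nor could it be, given the current state of the problem---but as a strategic analysis it is accurate and strictly more detailed than the paper's own discussion.
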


The issue  for Noetherian rings of Krull dimension at most $d$ reduces to the case 
of complete local domains of Krull dimension at most $d$, where it is equivalent to ask whether every
system of parameters is a Q-sequence. One may pass to the normalization of the complete local domain. 
The results of Hochster and Huneke \cite{quasi-length} show that this conjecture is true in equal characteristic 
(\cite[Theorem 4.1]{quasi-length})  and in dimension  at most 2 
(the normalized complete local domain is Cohen-Macaulay and one may apply
\cite[Proposition 1.2(c)]{quasi-length}.  Hence, by the results of Hochster and Huneke we have: 
\begin{thm}\label{equirob}  Let $R$ be a Noetherian ring such that $R\red$ contains a field or such that $R$ is of Krull dimension
at most 2.   Then  $R$ is robust. \end{thm}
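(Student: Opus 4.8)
The plan is to unwind the definition of robustness and then quote the appropriate results from \cite{quasi-length}. By Definition~\ref{defn-robust-algebra}, to say that $R$ is robust (as an algebra over itself) is to say that for every complete local domain $D$ of $R$ the $D$-algebra $D\otimes_R R\cong D$ is robust over $D$, which by the first clause of that definition means precisely that \emph{every system of parameters of $D$ is a $\Q$-sequence in $D$}. So I would first record two reductions. Reduction (i): if $R\red$ contains a field, then every complete local domain $D$ of $R$ is equicharacteristic --- indeed, if $R\red$ contains a field of characteristic $p>0$ then $p\cdot 1$ is nilpotent in $R$, hence $0$ in the domain $D$, so $\mathbb{F}_p\inc D$; while if $R\red$ contains $\mathbb{Q}$ then each nonzero integer, being a unit modulo the nilradical, is a unit in $R$, so $\mathbb{Q}\inc R$ and hence $\mathbb{Q}\inc D$ (a proper ideal of $D$ meets $\mathbb{Q}$ in $(0)$); in either case $D$, a complete local domain containing a field, is equicharacteristic. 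Reduction (ii): if $\dim R\leq 2$, then every complete local domain $D$ of $R$, being a homomorphic image of some $\widehat{R_\fm}$, satisfies $\dim D\leq\dim\widehat{R_\fm}=\dim R_\fm\leq 2$.

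For the first hypothesis it then remains to show that if $D$ is an equicharacteristic complete local domain of dimension $d$ and $\underline x$ is a system of parameters of $D$, then $\underline x$ is a $\Q$-sequence in $D$; this is exactly \cite[Theorem 4.1]{quasi-length}, and alternatively it follows from Proposition~\ref{content-free} (applied with $n=1$), which gives $\fh^d_{\underline x}(D)=1$, together with the discussion in \S\ref{prel} (equivalently \cite[Theorem 3.8]{quasi-length}), which shows that $\fh^d_{\underline x}(D)=1$ forces $\underline x$ to be a $\Q$-sequence.

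For the second hypothesis, let $D$ be a complete local domain of $R$ with $d=\dim D\leq 2$ and let $D'$ be the integral closure of $D$ in its fraction field. Since a complete local ring is excellent, hence Nagata, $D'$ is module-finite over $D$; and since $D$ is Henselian and $D'$ is a domain, $D'$ is local. Being normal of dimension at most $2$, $D'$ is Cohen--Macaulay (by Serre's criterion). Any system of parameters $\underline x$ of $D$ is again a system of parameters of $D'$, since $D'/\underline x D'$ is module-finite over the Artinian local ring $D/\underline x D$ and $\dim D'=\dim D$; hence \cite[Proposition 1.2(c)]{quasi-length} gives that $\underline x$ is a $\Q$-sequence in $D'$, i.e., $\fh^d_{\underline x}(D')=1$. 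To descend to $D$, I would use the base-change monotonicity of quasilength: for a ring map $D\to D'$ and a $D$-module $M$ of finite $I$-quasilength, applying $D'\otimes_D-$ to a filtration of $M$ realizing $\cL_I(M)$ and replacing each term by its image in $D'\otimes_D M$ yields a filtration of $D'\otimes_D M$ by $D'$-submodules whose successive quotients are cyclic and killed by $ID'$, so $\cL_{ID'}(D'\otimes_D M)\leq\cL_I(M)$ (elementary from \cite[Proposition 1.1(d)]{quasi-length}). Taking $M=D/I_tD$ with $I=(\underline x)D$ and $I_t=(x_1^t,\dots,x_d^t)$ gives $\fh^d_{\underline x}(D')\leq\fh^d_{\underline x}(D)$; the left side is $1$ and the right side is at most $1$ by \cite[Proposition 1.2(a)]{quasi-length}, so $\fh^d_{\underline x}(D)=1$ and $\underline x$ is a $\Q$-sequence in $D$.

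The argument is essentially organizational: all the genuine input is imported from \cite{quasi-length} --- \cite[Theorem 4.1]{quasi-length} in the equicharacteristic case and \cite[Proposition 1.2(c)]{quasi-length} in the Cohen--Macaulay case --- so there is no substantial obstacle. The only step that is not purely formal is the reduction of the dimension-$\leq 2$ case to the Cohen--Macaulay case by passing to the normalization, and this rests on standard facts (module-finiteness of the normalization over an excellent ring, and that a normal Noetherian local domain of dimension at most $2$ is Cohen--Macaulay) recalled above.
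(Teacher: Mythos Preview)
Your proposal is correct and follows essentially the same approach as the paper: reduce to complete local domains $D$, invoke \cite[Theorem~4.1]{quasi-length} in the equicharacteristic case, and in dimension $\leq 2$ pass to the normalization (Cohen--Macaulay) and apply \cite[Proposition~1.2(c)]{quasi-length}. You are in fact more explicit than the paper about the descent from $D'$ back to $D$ via the base-change inequality $\fh^d_{\underline x}(D')\leq\fh^d_{\underline x}(D)$; the paper leaves this step implicit in the phrase ``one may pass to the normalization,'' but your argument (which is essentially \cite[Proposition~2.5]{quasi-length}) is the intended justification.
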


The difficult result on the vanishing of content in \S\ref{notrob} enables us to show that parasolid algebras are not, 
in general, robust.  In the example given in the result below, $A \to R$ is actually split as a map of $A$-modules.
\begin{thm}
Let $R=\mathbb{C}[[x,y,u,v]]$ and $A=\mathbb{C}[[xu,yv,xv+yu]]$. Then $R$ is a parasolid $A$-algebra, in fact
$A \to R$ splits as a map of $A$-modules, but $R$ is not a robust $A$-algebra.
\end{thm}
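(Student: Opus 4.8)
The plan is to prove the three assertions separately: that $A\to R$ splits as a map of $A$-modules, which by Remark~\ref{splitpar} immediately yields that $R$ is parasolid over $A$; and that $R$ is not a robust $A$-algebra, which I establish by exhibiting one system of parameters of $A$ whose image in $R$ is not a $\Q$-sequence.

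For the splitting, equip $R$ with the $\mathbb{Z}$-grading (a topological grading on the power series ring) in which $x,y$ have degree $1$ and $u,v$ have degree $-1$, and let $R_0$ be its degree-$0$ part, spanned topologically by the monomials $x^ay^bu^cv^d$ with $a+b=c+d$. The projection of $R$ onto its degree-$0$ component is $R_0$-linear and fixes $R_0$, so $R_0\hookrightarrow R$ splits as $R_0$-modules; since $a_1:=xu$, $a_2:=yv$, $a_3:=xv+yu$ all lie in $R_0$ we have $A\subseteq R_0$, and it remains to split $A\hookrightarrow R_0$ over $A$. The key structural fact is that $R_0$ is a \emph{free} $A$-module of rank $2$ on $\{1,\,xv\}$. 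From $(xv)(yu)=xyuv=a_1a_2$ and $(xv)+(yu)=a_3$ one gets the monic relation $(xv)^2-a_3(xv)+a_1a_2=0$; since $R_0$ is generated over $A$ by the degree-$0$ monomials, hence by $xu,xv,yu,yv$, and $yu=a_3-xv$, this relation shows $R_0=A\cdot 1+A\cdot xv$. To see the sum is direct, note first that $a_1,a_2,a_3$ are algebraically independent over $\mathbb{C}$ (for instance $\mathbb{C}(a_1/a_2,\,a_3/a_2,\,a_2)=\mathbb{C}(pq,\,p+q,\,yv)$ with $p=x/y$, $q=u/v$ has transcendence degree $3$); comparing bihomogeneous components in the bigrading $\deg x=\deg y=(1,0)$, $\deg u=\deg v=(0,1)$ (in which each $a_i$ has bidegree $(1,1)$), they are then analytically independent as well, so $A\cong\mathbb{C}[[T_1,T_2,T_3]]$ is regular of dimension $3$. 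Under this isomorphism the discriminant $a_3^2-4a_1a_2$ corresponds to $T_3^2-4T_1T_2$, which is irreducible, hence not a square, in $\mathbb{C}[[T_1,T_2,T_3]]$, so $W^2-a_3W+a_1a_2$ is irreducible over $\mathrm{Frac}(A)$; therefore $xv\notin\mathrm{Frac}(A)$ and no relation $b_0+b_1(xv)=0$ with $b_0,b_1\in A$ not both zero can hold, i.e. $\{1,xv\}$ is an $A$-basis of $R_0$. Hence $A\hookrightarrow R_0$ splits (project onto the $1$-component), and composing with the splitting of $R_0\hookrightarrow R$ splits $A\hookrightarrow R$ over $A$. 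By Remark~\ref{splitpar}, $R$ is parasolid over $A$.

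For the failure of robustness, observe that $A$ is a complete local domain (a subring of the domain $R$, and a quotient of $\mathbb{C}[[T_1,T_2,T_3]]$; in fact $A\cong\mathbb{C}[[T_1,T_2,T_3]]$ by the above) with maximal ideal $(a_1,a_2,a_3)=(xu,\,yv,\,xv+yu)$, so $\underline{x}:=(xu,\,yv,\,xv+yu)$ is the image in $R$ of a system of parameters of $A$. By Definition~\ref{defn-robust-algebra}, if $R$ were a robust $A$-algebra then $\underline{x}$ would be a $\Q$-sequence in $R$, i.e. $\fh^3_{\underline{x}}(R)=1$. I show instead that $\fh^3_{\underline{x}}(R)=0$. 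Taking the coefficient ring in \S\ref{notrob} to be $\mathbb{C}$, the theorem of that section gives $\fh^3_{\underline{x}}(\mathbb{C}[x,y,u,v])=0$ (that section uses $xv-yu$ as the third generator, but the automorphism $v\mapsto-v$ of $\mathbb{C}[x,y,u,v]$ carries the ideals $I_{\underline{t}}$ for the sequence $xu,\,yv,\,xv-yu$ exactly onto those for $xu,\,yv,\,xv+yu$, the sign changes being absorbed into the generators, so the two contents agree). Since $\mathbb{C}[x,y,u,v]\to\mathbb{C}[[x,y,u,v]]=R$ is flat---it is the $(x,y,u,v)$-adic completion of a Noetherian ring---and quasilength, hence content, cannot increase under a flat base change $T\to T'$ (a filtration of $T/I_{\underline{t}}$ with cyclic factors killed by $I$ tensors up to a filtration of $T'/I_{\underline{t}}T'$ of the same length with cyclic factors killed by $IT'$, so $\cL_{IT'}(T'/I_{\underline{t}}T')\le\cL_I(T/I_{\underline{t}})$), we conclude $\fh^3_{\underline{x}}(R)\le\fh^3_{\underline{x}}(\mathbb{C}[x,y,u,v])=0$. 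Thus $\underline{x}$ is not a $\Q$-sequence in $R$, and $R$ is not a robust $A$-algebra.

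The genuinely hard input---that the content of $H^3_I(\mathbb{C}[x,y,u,v])$ with respect to $xu,\,yv,\,xv+yu$ is $0$---is already available from \S\ref{notrob}, so I expect the remaining work to be essentially bookkeeping: verifying that $R_0$ is $A$-free of rank $2$ (equivalently, that the monic relation above is the only relation, for which the analytic independence of $a_1,a_2,a_3$ is precisely what is needed), and transferring the vanishing of content from the polynomial ring to its completion via the flat-base-change estimate for quasilength. The only mildly delicate points are keeping track of the topological grading on the power series ring and checking the flat-base-change inequality for quasilength; there is no new hard analysis.
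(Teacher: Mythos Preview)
Your proof is correct and follows essentially the same architecture as the paper's: factor $A\hookrightarrow R$ through an intermediate ring, split each inclusion, and invoke \S\ref{notrob} for the failure of robustness. Your degree-$0$ subring $R_0$ is exactly the paper's $B=\mathbb{C}[[xu,xv,yu,yv]]$ (the $\mathbb{C}^*$-invariants for the action $c\cdot(x,y,u,v)=(cx,cy,c^{-1}u,c^{-1}v)$ are precisely the degree-$0$ elements for your grading), and your grading projection is the Reynolds operator the paper uses to split $B\hookrightarrow R$.

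The one substantive difference is how you establish that $R_0$ is free over $A$. The paper observes that $B$ is Cohen--Macaulay (it is a determinantal ring) and that $xu,\,yv,\,xv+yu$ is a system of parameters for $B$, so $B$ is automatically free over the regular local ring $A$; this is quick but uses a structural fact about $B$. You instead exhibit the explicit basis $\{1,xv\}$ via the monic relation $(xv)^2-a_3(xv)+a_1a_2=0$ and a discriminant argument, which is more elementary and gives the rank directly. Both are fine; your approach does require a word about why the relation $R_0=A+A\cdot xv$, established monomial by monomial, passes to the completed setting (finiteness of $A[xv]$ over the complete local ring $A$ handles this), which you acknowledge. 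You are also more scrupulous than the paper in two places: you address the sign discrepancy between $xv-yu$ (used in \S\ref{notrob}) and $xv+yu$, and you justify passing the content-zero result from $\mathbb{C}[x,y,u,v]$ to the power-series ring $\mathbb{C}[[x,y,u,v]]$ via the flat base-change inequality for quasilength (this is \cite[Proposition~2.5]{quasi-length}); the paper simply asserts ``from \S\ref{notrob} we know that $R$ is not a robust $A$-algebra.''
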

\begin{proof}
Let $B=\mathbb{C}[[xu,xv,yu,yv]]$. Then it is easy to see that $B$ is Cohen-Macaulay and $xu,yv,xv+yu$ is a system of parameters of $B$. Therefore $B$ is a free $A$-module and hence the natural inclusion $A\hookrightarrow B$ splits (over $A$). On the other hand, $B$ is the invariant subring of $R$ under the $\mathbb{C}^*$-action given by:
\[c\circ x=cx,\ c\circ y=cy,\ c\circ u=\frac{1}{c}u,\ c\circ v=\frac{1}{c}v\]
for each $c\in \mathbb{C}^*$. And hence $B\hookrightarrow R$ splits (over $B$). Therefore $A\hookrightarrow R$ splits (over $A$), and consequently $R$ is a parasolid $A$-algebra by Remark~\ref{splitpar}.

However, from \S\ref{notrob} we know that $R$ is not a robust $A$-algebra.
\end{proof}
\begin{defn}
\label{defn-robust-closure}
Let $(R,\fm)$ be a complete local domain and $N\subseteq M$ finitely generated $R$-modules. Let $N\roi$ (or
${N\roi}_M$ if $M$ is not clear from context) denote the
submodule of $M$ generated by $N$ and all elements $u\in M$ such that a generic forcing 
$S$ for the triple $(M,N,u)$ is a robust $R$-algebra. (It is equivalent to say that there exists a forcing algebra for
$(M,N,u)$ that is robust.) We refer to $N\roi$ as the {\it immediate robust closure}
of $N$ in $M$.  Let $N\rob j$ be defined recursively by $N\rob 0 = N$ and $N\rob {j+1} = (N{\rob j})\roi$.  
The {\it robust closure} of $N$ in $M$ is defined to be $\bigcup_j N\rob j$, denoted by $N\ro$ or $N\ro_M$. 
Note that since the sequence $N \rob j$ is clearly nondecreasing as $j$ increases and $M$ is Noetherian, we 
have that $N\ro = N \rob j$ for all $j \gg 0$.  
\par
In general, for any Noetherian ring $R$ and finitely generated $R$-modules $N\subseteq M$, we say an element $u\in M$ is in the {\it robust closure} of $N$ in $M$ if $1\otimes u$ is contained in $\im(R'\otimes N\to D\otimes_R M)\ro_{D\otimes_R M}$ for every complete local domain $D$ of $R$.   
\end{defn}
\begin{rem}
In Definition~\ref{defn-robust-closure}, if we use a generic forcing algebra $T$ for $(M,N,u)$ for possibly different data, then 
whether the generic forcing algebra obtained is robust is not affected. To see this, note that by Proposition \ref{two-generic-forcing-algebras}, it suffices to prove that a system of parameters for $R$ forms a $\Q$-sequence in a generic forcing algebra $S$ for 
$(M,N,u)$ if and only if it forms a $\Q$-sequence in the polynomial 
ring $S[{\bf Y}]$ over $S$.  This is clear because we have $S$-algebra homomorphisms $S \to S[{\bf Y}] \to S$
whose composition is the identity, and these are also $R$-algebra homomorphisms. \par
Moreover, in defining $N\roi$ (and, hence, in defining all the $N\rob j$), we may take $S$ to be any forcing algebra
rather than a generic forcing algebra.  For then we may choose a generic forcing algebra $S_0$  such that
$R \to S$ factors $R \to S_0 \to S$,  and the fact that $S$ is robust implies that $S_0$ is robust.
\end{rem}

There are technical difficulties in working with robust closure. One is that we do not know that Noetherian rings are robust as algebras
over themselves, which was discussed above. Her are two others.  We do not know whether the tensor product of two
robust algebras over a complete local domain $R$ is robust, nor whether, if $S$ is robust over $R$ and
$R \to R'$ is a local homomorphism of complete local domains, $R'\otimes_R S$ is robust over $R'$.   But the notion
is well-behaved in many other ways. \par
Let  $R$ be a complete local domain.  We will see later that in positive characteristic both robust closure and solid closure agree with tight 
closure.  \par
If regular sequences of length 2 are $\Q$-sequences (this is an open in the non-Noetherian case for every length $> 1$),
then this is also true for latent regular sequences of length 2. Assuming this, then 
according to \cite[Theorem 12.5]{solid-closure}, in characteristic 0,  
when $\dim(R)=2$, robust closure coincides with solid closure.  We do not know whether this is true.

In dimension three robust closure can be strictly smaller than solid closure in equal characteristic 0. 
For example, in $R=\mathbb{C}[[x_1,\,x_2,\,x_3]]$, one has that $x_1^2x_2^2x_3^2\in (x_1^3,x_2^3,x_3^3)^{\bigstar}$, using the result of
\cite{Roberts-Example}. See also the second paragraph of \S\ref{intro}. However, every ideal of $R$ is its own robust closure by Theorem~\ref{regrobcl} below.

\begin{prop}\label{robcon} If  $R$ is Noetherian and $S$ is a robust $R$-algebra, then the contraction of $IS$ to $R$ is contained
in $I\ro$ for all ideals $I$ of $R$. \end{prop}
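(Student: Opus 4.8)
The plan is to reduce immediately to the case of a complete local domain, since both the hypothesis that $S$ is robust and the definition of $I\ro$ are phrased in terms of complete local domains $D$ of $R$. So suppose first that $(R,\fm)$ is a complete local domain and that $S$ is a robust $R$-algebra; let $f\in R$ be an element of the contraction of $IS$ to $R$, i.e. $f\in IS\cap R$, and let $I=(\vect f h)$. The goal is to show $f\in I\ro$, and by Definition~\ref{defn-robust-closure} it suffices to exhibit a single forcing algebra for the triple $(R,I,f)$ (equivalently, for $(R/I,0,\bar f)$) that is robust over $R$. The natural candidate is $S$ itself, or rather $S$ equipped with its $R$-algebra structure: because $f\in IS$, we can write $f=\sum_{i=1}^h f_i s_i$ with $s_i\in S$, and this is exactly the statement that the image of $f$ in $S=S\otimes_R R$ lies in $\im(S\otimes_R I\to S\otimes_R R)$. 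Hence $S$ is a forcing algebra for $(R,I,f)$ in the sense of Definition~\ref{defforce} and Remark~\ref{remforce}. Since $S$ is robust over $R$ by hypothesis, the Remark following Definition~\ref{defn-robust-closure} (that we may use any forcing algebra, not just a generic one, and that a forcing algebra through which a robust one factors is itself relevant) tells us that $f\in I\roi\subseteq I\ro$. This completes the complete-local-domain case.

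For the general Noetherian case, let $f$ be in the contraction of $IS$ to $R$ and let $D$ be an arbitrary complete local domain of $R$, with structure map $R\to D$. Base change along $R\to D$: the $D$-algebra $D\otimes_R S$ is robust over $D$ by the definition of robustness of $S$ over a general Noetherian ring. The image of $f$ in $D\otimes_R S$ lies in $\im\bigl((D\otimes_R S)\otimes_D ID\to D\otimes_R S\bigr)$, because $f\in IS$ is preserved under the ring map $S\to D\otimes_R S$. Applying the complete-local-domain case already established, with the complete local domain $D$, the robust $D$-algebra $D\otimes_R S$, and the ideal $ID$, we conclude that the image of $f$ in $D$ lies in $(ID)\ro_D$. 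As this holds for every complete local domain $D$ of $R$, the definition of robust closure in the general case (the last sentence of Definition~\ref{defn-robust-closure}) gives $f\in I\ro$.

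I do not expect a serious obstacle here: the argument is essentially a formal unwinding of the definitions, the only real content being the observation that $S$ itself serves as a forcing algebra for $(R,I,f)$ once $f\in IS$. The one point requiring a little care is the passage to the general case, where one must check that base change of a robust algebra to a complete local domain $D$ produces a robust $D$-algebra and that the ``contracted to $R$'' condition $f\in IS$ survives the base change $S\to D\otimes_R S$; both are immediate, the former by the very definition of robustness for algebras over a general Noetherian ring and the latter because ring homomorphisms carry elements of $IS$ to elements of the extended ideal. No use is made of characteristic, so the statement holds in all characteristics.
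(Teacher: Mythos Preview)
Your proposal is correct and follows essentially the same approach as the paper's proof: both reduce to complete local domains $D$ of $R$, observe that $D\otimes_R S$ is robust over $D$ by definition and is itself a forcing algebra for $(D, ID, \text{image of }u)$ since $u\in IS$ implies its image lies in $I(D\otimes_R S)$, and conclude that the image of $u$ lies in $(ID)\ro$. The paper's version compresses this into three sentences without separating out the complete-local-domain case, but the content is identical.
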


\begin{proof} Let $u \in R$ have image in  $IS$. 
Let $D$ be a complete local domain of $R$.  Then $D \otimes_RS$ is robust, and
the image of $u$ is in $I(D \otimes_R S)$, which the image of $IS$.  Hence, $u$ is in $I\ro$. \end{proof}

\begin{prop}\label{mfcon} If $R$ is Noetherian and $R\red$ contains a field, then every module-finite extension $S$ of  $R$ is robust.
Hence,  $IS \cap R \inc I\ro$ for every ideal of $R$.  Therefore, if $R$ is reduced and contains a field,  $I^+ \inc  I\ro$.  
\end{prop}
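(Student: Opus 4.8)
The plan is to verify robustness of $S$ directly from Definition~\ref{defn-robust-algebra} and then deduce the two closure statements formally. Fix a complete local domain $D$ of $R$, put $d=\dim D$, and set $T:=D\otimes_R S$; it suffices to show that the image in $T$ of every system of parameters $\underline{x}=x_1,\dots,x_d$ of $D$ is a $\Q$-sequence in $T$. I would first record two reductions. (i) $D$ is equicharacteristic: since $D$ is a domain, $R\to D$ kills the nilradical of $R$, hence factors through $R\red$, which contains a field $k$, and $k\to D$ is injective because $D\neq 0$. (ii) $D\hookrightarrow T$ and $T$ is module-finite over $D$. For (ii), write $D=\widehat{R_{\fm}}/\fq$ with $\fq$ a minimal prime of $\widehat{R_{\fm}}$; since $R\to\widehat{R_{\fm}}$ is flat and $R\hookrightarrow S$, the ring $B:=\widehat{R_{\fm}}\otimes_R S$ is a module-finite, hence integral, extension of $A:=\widehat{R_{\fm}}$, so lying-over gives $\fq B\cap A=\fq$, and therefore $D=A/\fq\hookrightarrow B/\fq B=T$, module-finite over $D$. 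Consequently $\dim T=\dim D=d$ and $T$ is equicharacteristic.

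Next I would carry out the content computation for $T$, after localizing at a suitable maximal ideal. Since $\dim T=d$, choose a maximal ideal $\fn$ of $T$ with $\dim T_{\fn}=d$; then $T_{\fn}$ is a $d$-dimensional equicharacteristic local ring. For any system of parameters $\underline{x}$ of $D$, the ideal $\underline{x}T_{\fn}$ contains a power of $\fm_D T_{\fn}$, which is $\fn T_{\fn}$-primary, so $\underline{x}T_{\fn}$ is $\fn T_{\fn}$-primary and $\underline{x}$ is a system of parameters of $T_{\fn}$. Applying Theorem~\ref{main-thm} to the ring $T_{\fn}$ regarded as a module over itself gives
\[
\fh^d_{\underline{x}}(T_{\fn})=\max\{\rk_{T_{\fn}/P}(T_{\fn}/P)\mid P\in\Min_d(T_{\fn})\}=1,
\]
since each $T_{\fn}/P$ is a domain of rank one over itself and $\Min_d(T_{\fn})\neq\emptyset$. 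As $T_{\fn}$ is a localization of $T$, \cite[Proposition 2.5]{quasi-length} gives $\fh^d_{\underline{x}}(T)\ge\fh^d_{\underline{x}}(T_{\fn})=1$, while \cite[Proposition 1.2(a)]{quasi-length} gives the reverse bound $\fh^d_{\underline{x}}(T)\le 1$; moreover $\underline{x}T\subseteq\fm_D T$ lies in the Jacobson radical of the integral extension $T$, so $\underline{x}T$ is proper. Hence $\fh^d_{\underline{x}}(T)=1$, i.e.\ $\underline{x}$ is a $\Q$-sequence in $T$. Since $D$ and $\underline{x}$ were arbitrary, $D\otimes_R S$ is robust over $D$ for every complete local domain $D$ of $R$, so $S$ is robust over $R$. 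The inclusion $IS\cap R\subseteq I\ro$ then follows from Proposition~\ref{robcon}, since $IS\cap R$ is the contraction of $IS$ to $R$. Finally, if $R$ is reduced and contains a field, then $I^{+}$ is the union of the ideals $IS\cap R$ over the module-finite $R$-subalgebras $S$ of $R^{+}$ (which is the directed union of such $S$), and each of these lies in $I\ro$ by what was just proved, so $I^{+}\subseteq I\ro$.

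The step I expect to be the crux is (ii): that $T=D\otimes_R S$ is genuinely an \emph{extension} of $D$, not merely module-finite over it — equivalently, that $\operatorname{Frac}(D)\otimes_R S\neq 0$, i.e.\ that some component of $D\otimes_R S$ has dimension $d$ and dominates $D$. This is the only place where the hypothesis that $S$ is an extension of $R$ (and not merely a module-finite $R$-algebra, which in general need not be robust) is used, and it must be transported through the base change along $R\to\widehat{R_{\fm}}\to D$; the key inputs are that flatness of $R\to\widehat{R_{\fm}}$ preserves injectivity and that lying-over pins down the contraction $\fq B\cap A$. Everything after that is a routine application of Theorem~\ref{main-thm} together with standard facts about module-finite algebras over complete local rings and the localization and monomial-filtration bounds on content.
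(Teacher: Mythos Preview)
Your proof is correct and follows essentially the same strategy as the paper: reduce to a complete local domain $D$ of $R$, show $D\hookrightarrow T=D\otimes_R S$ is module-finite (via flatness of $R\to\widehat{R_{\fm}}$ plus lying-over), and then verify that any system of parameters of $D$ is a $\Q$-sequence in $T$ by passing to an auxiliary equicharacteristic local ring where the known equicharacteristic result applies. The only cosmetic difference is that the paper maps $T$ onto a domain quotient $S_1/Q$ (automatically complete local) and invokes Theorem~\ref{equirob}, whereas you localize $T$ at a maximal ideal of full dimension and invoke Theorem~\ref{main-thm}; since Theorem~\ref{main-thm} applied to the ring itself just recovers the statement that a system of parameters in an equicharacteristic local ring is a $\Q$-sequence (i.e., \cite[Theorem~4.7]{quasi-length}, the input to Theorem~\ref{equirob}), the two routes are equivalent and your citation is mild overkill but not wrong.
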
 
\begin{proof} 
If $\fm$ is maximal in $R$ and $R_1$ denote the completion of  $R_{\fm}$,  then
$S_1 = R_1 \otimes S$ is a module-finite extension of $R_1$.  Let $P$ be a minimal prime of $R_1$ and let
$Q$ be a prime of $S_1$ lying over $P_1$.  Let $D = R/P_1$.  We need to show that $D \otimes S_1 = S_1/PS_1$
is robust over $D$.  But $S_1/PS_1$ maps to $S/Q$,  and $R/P = D \to S/Q$ is injective and module-finite.  
Hence, any system of parameters for $R/P$ is a system of parameters for the equicharacteristic complete local
domain $S/Q$, and the result follows from Theorem~\ref{equirob}. The final statement is immediate from the fact
that every integral extension of $R$ is a directed union of module-finite extensions. 
\end{proof}

\begin{rem} There are results for finitely generated modules corresponding to both Proposition~\ref{robcon} and 
Proposition~\ref{mfcon}.  If $N \inc M$ and we have a homomorphism $R \to S$,  we work with
the inverse image in $M$ of $\text{Im}(S \otimes_R N \to S \otimes_RM)$ under the map $M \to S  \otimes_RM$
(sending $u \mapsto 1 \otimes u$).  That is, we think of the contracted expansion of $N$ as the set of 
$u \in M$ such that $1 \otimes u$ is in the image of $S \otimes_RN$ in $S\otimes_R M$. \end{rem}
\begin{rem} Proposition~\ref{mfcon} holds for module-finite extensions of Noetherian rings that are robust
as algebras over themselves.  Thus, if Conjecture~\ref{robcj} is true, Proposition~\ref{mfcon} will hold for all Noetherian rings.
Coupled with Theorem~\ref{regrobcl} below, this would prove the direct summand conjecture. \end{rem}

\begin{prop}
\label{basic-properties}
Let $R$ be a Noetherian ring, and let $N \inc M$ and $N' \inc M'$ be Noetherian $R$-modules.  Then:
\begin{enumerate}
\item[(a)] $N\subseteq N\ro_M$.
\item[(b)] If $u \in M$,  then $u \in N\ro_M$ iff $\overline{u} \in 0\ro_{M/N}$,  where $\overline{v}$ denotes 
the image of $v \in M$ in  $M/N$.
\item[(c)] If $u \in M$, $u' \in M'$, and there is an $R$-linear map $M/N \to M/'/N'$ such that $u \mapsto u'$,
then if $u \in N\ro_M$ we have that $u' \in {N'}\ro_{M'}$.  In particular, if $N \inc N' \inc M$ then
$N\ro_M \subseteq N'\ro_{M}$, and if  $N \inc M \inc M'$ then $N\ro_M \subseteq N\ro_{M'}$.  
\item[(d)] $(N\ro_M)\ro_M=N\ro_M$.
\item[(e)] If  $N' \inc M$, then$(N\cap N')\ro_M\subseteq N\ro_M\cap N'\ro_M$. 
\item[(f)] $(N+N')\ro_M\subseteq (N\ro_M+N'\ro_M)\ro_M$.
\item[(g)] If $N_i \inc M_i$ are Noetherian, $1 \leq i \leq n$, $N = \bigoplus_i N_i$, and $M = \bigoplus_i M_i$,
then $N\ro_M$ may be identified in the obvious way with $\bigoplus_i {N_i}\ro_{M_i}$.  
\end{enumerate}
\end{prop}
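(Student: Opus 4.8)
The plan is to reduce all of (a)--(g) to a single observation about the immediate robust closure together with its functoriality, and then to propagate this through the recursion defining $N\ro$ and through the passage to complete local domains. Suppose first that $R$ is a complete local domain. By Remark~\ref{remforce} a forcing algebra for $(M,N,u)$ is the same as a forcing algebra for $(M/N,0,\overline u)$, and by the Remark following Definition~\ref{defn-robust-closure} we may use an arbitrary forcing algebra rather than a generic one and need not track the presentation data. Since an $R$-algebra $S$ is a forcing algebra for $(V,0,w)$ exactly when $1\otimes w=0$ in $S\otimes_R V$, two facts drop out: (i) $N\roi_M$ is precisely the preimage under $M\to M/N$ of $0\roi_{M/N}$; and (ii) any $R$-linear map $\psi\colon V\to V'$ satisfies $\psi(0\roi_V)\inc 0\roi_{V'}$, because applying $\mathrm{id}_S\otimes\psi$ to $1\otimes w=0$ yields $1\otimes\psi(w)=0$ in $S\otimes_R V'$ with the same robust algebra $S$. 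Iterating (i) and (ii) by induction on $j$ — at the $j$th stage a map $M/N\to M'/N'$ descends to the quotients $(M/N)/0\rob{j}_{M/N}$ and $(M'/N')/0\rob{j}_{M'/N'}$, where (ii) applies, and (i) identifies $0\roi$ of each quotient with $0\rob{j+1}/0\rob{j}$ — shows that $N\rob{j}_M$ is the preimage of $0\rob{j}_{M/N}$ and that every $R$-linear map $M/N\to M'/N'$ carries $0\rob{j}_{M/N}$ into $0\rob{j}_{M'/N'}$. Taking unions over $j$ gives (b) and gives (c) in the complete local domain case; (a) is then immediate, since $N=N\rob{0}\inc N\ro$, and (d) follows because the ascending chain $\{N\rob{j}_M\}$ stabilizes ($M$ is Noetherian), so $(N\ro_M)\roi_M=N\ro_M$ and hence $(N\ro_M)\ro_M=N\ro_M$.

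Still over a complete local domain, parts (e), (f), (g) are formal consequences of (a), of the monotonicity built into (c), and of the fact that $N\ro_M$ is a submodule: for (e), apply $(-)\ro_M$ to $N\cap N'\inc N$ and to $N\cap N'\inc N'$; for (f), apply it to $N\inc N\ro_M+{N'}\ro_M$ and to $N'\inc N\ro_M+{N'}\ro_M$; for (g), apply (c) to the inclusions $M_i/N_i\hookrightarrow M/N$ — which gives $\bigoplus_i{N_i}\ro_{M_i}\inc N\ro_M$, as finite sums remain in the submodule $N\ro_M$ — and to the projections $M/N\twoheadrightarrow M_i/N_i$, which gives the reverse containment.

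It remains to bootstrap to an arbitrary Noetherian ring $R$. By definition each robust closure over $R$ is an intersection, over the complete local domains $D$ of $R$, of preimages of robust closures over $D$, so in particular it is again a submodule. Parts (a), (b), (e), (f), (g) transfer to this setting formally from their complete-local-domain versions, and for (c) one base changes the given map $M/N\to M'/N'$ along each $D$ and invokes the complete-local-domain case. The one point I expect to be genuinely delicate is (d): for a complete local domain $D$ of $R$ one must check that the image of $D\otimes_R N\ro_M$ in $D\otimes_R M$ lies inside $\im(D\otimes_R N\to D\otimes_R M)\ro_{D\otimes_R M}$. This holds because $D$ is itself one of the complete local domains of $R$ while being its own only complete local domain; hence, applying the definition of $N\ro_M$ to this particular $D$, every $v\in N\ro_M$ satisfies $1\otimes v\in\im(D\otimes_R N\to D\otimes_R M)\ro_{D\otimes_R M}$, which is a $D$-submodule, so the $D$-span of these images lies in it. Monotonicity (complete-local-domain (c)) together with the complete-local-domain case of (d) then gives $\im(D\otimes_R N\ro_M\to D\otimes_R M)\ro_{D\otimes_R M}\inc\im(D\otimes_R N\to D\otimes_R M)\ro_{D\otimes_R M}$, which finishes the argument; everything else is routine bookkeeping with the recursion.
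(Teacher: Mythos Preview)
Your proof is correct and follows essentially the same approach as the paper's: both reduce to the complete local domain case via the definition, use Remark~\ref{remforce} to identify forcing algebras for $(M,N,u)$ with forcing algebras for $(M/N,0,\overline u)$, run an induction on $j$ through the recursion $N\rob{j+1}=(N\rob{j})\roi$, and derive (e), (f), (g) formally from (c) via the obvious inclusions and projections. The only organizational difference is that you front-load the complete local domain case for all parts and then bootstrap, whereas the paper base-changes inside each part; and your treatment of the ``delicate point'' in (d)---that the image of $N\ro_M$ in $D\otimes_R M$ lands in the robust closure over $D$ because $D$ is simultaneously a complete local domain of $R$ and its own unique complete local domain---spells out what the paper compresses into the single clause ``after the base change, the image of $N\ro$ is in the robust closure of the image of $N$.''
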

\begin{proof}[Proof] (a). This is clear from the definition. \par
(b) The submodules $N'/N$ of $M/N$ are in a bijective correspondence with the submodules $N'$ of $M$ with $N \subseteq N'$,
and, by Remark~\ref{remforce} for each element $v \in M$,  a forcing algebra for $(M,N',v)$ is the same as a 
forcing algebra for $(M/N, N'/N, \overline{v})$.  Again by Remark~\ref{remforce}, 
this remains true when we make a base change to a complete local domain $R'$ of $R$,  replacing  $M$ by $R' \otimes_R M$
and $N$ by the image of $R'\otimes_RN$ in $R'\otimes_RM$. We use the notation $R$, $M$, $N$ for what we obtain
after the base change.  It follows that  ${0\rob j}_{M/N} = ({N\rob j}_M)/N$, and the result follows. \par 
(c)    It suffices to prove the result when $N=0$ and $N' =0$ (we may replace
$M, M'$ by $M/N,\, M'/N'$). The result then says that a linear map $M \to M'$ takes $0\ro_M$ into $0\ro_{M'}$.
It suffices to prove the result after base change to a complete local domain of  $R$, and so we may assume that $R$ is
a complete local domain. We then need only show that the image $0\rob j_M$  maps into $0\rob j_{M'}$ for all $j$.
We use induction on $j$.  Again, we may replace $M$ and $M'$ by quotients, $M/0{\rob j}_M$ and $M'/0{\rob j}_{M'}$, respectively.
Thus, we need only consider the case $j=1$ to do the inductive step.  But this follows from the fact that by
the third paragraph of Remark~\ref{remforce}, a forcing algebra for $(M,0,u)$ is
a forcing algebra for $(M', 0, u')$.   The corollary statements in the second sentence are
obvious. \par
(d) Let $u \in (N\ro_M)\ro_M$.  We must show that $u \in  N\ro_M$.  It suffices to consider the issue after base change to
a complete local domain of $R$.  After the base change, the image of $N\ro$ is in the robust closure of the image of $N$.
Therefore, using part (c),  it suffices to show that $(N\ro_M)\ro_M = N\ro_M$ in the case of a complete local domain $R$.
But then  $N\ro_M = {N \rob i}_M$ for $i$ sufficiently large, and so $(N\ro_M)\ro_M = {({N\rob i}_M)\rob j}_M =
{N\rob {i+j}}_M = N\ro_M$, as required.\par
(e) and (f) are immediate from the second statement in part (c) (note that $N + N' \subseteq N\ro_M + N'\ro_M$). \par 
(g) Since ${N_i}\ro_{M_i} \inc N\ro_M$ is clear from part (c),
it suffices to show that if $u  = u_1\oplus \cdots \oplus u_n\in N\ro_M$ then $u_i \in {N_i}\ro_{M_i}$ for each $i$.  Identify
$M$ with $\prod_i M_i$. The needed fact
also follows from the part (c), along with the fact that the product projection $M \to M_i$ takes $u$ to $u_i$ and $N$ onto $N_i$.  
\end{proof}

\begin{prop}\label{persist} Let $R$ be a Noetherian ring and let $N \inc M$  be finitely generated $R$-modules.
Let $S$ be a Noetherian $R$-algebra such that every maximal ideal $\fM$ of $S$ lies over a maximal ideal $\fm$
of $R$ in such a way that that the contraction of every minimal prime $\fq$ of $\widehat{S}_{\fM}$ under 
$\theta:\widehat{R}_{\fm} \to \widehat{S}_{\fM}$
is a minimal prime $\fp$ of $\widehat{R}_{\fm}$ in such a way that $\widehat{R}_{\fm}/\fp \to \widehat{S}_{\fM}/\fq$ is
an isomorphsim.  If $u \in N\ro_M$,  then $1 \otimes_u \in S \otimes M$ is in 
$\bigl(\text{Im}(S \otimes_R N \to S \otimes_R M)\bigr)\ro_{S \otimes_R M}$.\par
In particular, the conclusion holds when $S$ is the localization of $R$
at one or at finitely many maximal ideals, the completion of $R_{\fm}$ for a maximal ideal $\fm$ of $R$, 
or when $S$ is the quotient of $R$ by a minimal prime.
\end{prop}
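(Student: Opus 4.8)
The plan is to unwind the definition of robust closure over the Noetherian ring $S$ and to match each complete local domain of $S$ with a complete local domain of $R$ using the hypothesis; after that the argument is purely formal. Write $M_S=S\otimes_R M$ and $N_S=\im(S\otimes_R N\to S\otimes_R M)\inc M_S$, which are finitely generated $S$-modules, and observe that the assertion to be proved is precisely that $1\otimes u\in (N_S)\ro_{M_S}$. By the definition of robust closure over a general Noetherian ring, this means: for every complete local domain $D'$ of $S$, the image of $1\otimes u$ in $D'\otimes_S M_S$ lies in $\bigl(\im(D'\otimes_S N_S\to D'\otimes_S M_S)\bigr)\ro_{D'\otimes_S M_S}$. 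Now $D'\otimes_S M_S\cong D'\otimes_R M$ by associativity of tensor, and since $N_S$ is a quotient of $S\otimes_R N$, right exactness of $D'\otimes_S(-)$ identifies $\im(D'\otimes_S N_S\to D'\otimes_S M_S)$ with $\im(D'\otimes_R N\to D'\otimes_R M)$ and the image of $1\otimes u$ with $1\otimes u\in D'\otimes_R M$. So I would reduce to showing: for every complete local domain $D'$ of $S$, one has $1\otimes u\in\bigl(\im(D'\otimes_R N\to D'\otimes_R M)\bigr)\ro_{D'\otimes_R M}$, the robust closure being taken over $D'$.

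Next, fix such a $D'=\widehat{S}_{\fM}/\fq$ with $\fM$ a maximal ideal of $S$ and $\fq$ a minimal prime of $\widehat{S}_{\fM}$. By hypothesis $\fM$ lies over a maximal ideal $\fm$ of $R$, so the structure map $R\to S\to\widehat{S}_{\fM}$ factors through an $R$-algebra map $\theta:\widehat{R}_{\fm}\to\widehat{S}_{\fM}$; again by hypothesis $\fp:=\theta^{-1}(\fq)$ is a minimal prime of $\widehat{R}_{\fm}$ and $\theta$ induces an isomorphism of $R$-algebras $D:=\widehat{R}_{\fm}/\fp\cong\widehat{S}_{\fM}/\fq=D'$. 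Thus $D$ is a complete local domain of $R$, and the isomorphism $D\cong D'$ carries $1\otimes u\in D\otimes_R M$ to $1\otimes u\in D'\otimes_R M$ and $\im(D\otimes_R N\to D\otimes_R M)$ onto $\im(D'\otimes_R N\to D'\otimes_R M)$. Since $u\in N\ro_M$ over $R$, the definition of robust closure over $R$ gives $1\otimes u\in\bigl(\im(D\otimes_R N\to D\otimes_R M)\bigr)\ro_{D\otimes_R M}$; because membership in the robust closure of a pair of finitely generated modules over a complete local domain is an intrinsic property (independent of the choice of generic forcing algebras, see the Remark after Definition~\ref{defn-robust-closure}), hence preserved under isomorphisms of the ambient data, transporting along the $R$-algebra isomorphism $D\cong D'$ yields $1\otimes u\in\bigl(\im(D'\otimes_R N\to D'\otimes_R M)\bigr)\ro_{D'\otimes_R M}$ over $D'$. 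That is exactly the reduced claim, so the main statement follows.

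For the ``in particular'' cases I would just verify the hypothesis on $S$. If $S$ is the localization of $R$ at one or at finitely many maximal ideals, then each maximal ideal $\fM$ of $S$ is $\fm S$ for a maximal ideal $\fm$ of $R$ and $R_{\fm}\to S_{\fM}$ is an isomorphism, hence so is $\theta:\widehat{R}_{\fm}\to\widehat{S}_{\fM}$, and the remaining conditions are immediate. If $S=\widehat{R}_{\fm}$, then $S$ is complete local with maximal ideal lying over $\fm$, $\widehat{S}_{\fM}=S=\widehat{R}_{\fm}$, and $\theta$ is the identity. If $S=R/\fp$ for a minimal prime $\fp$ of $R$, then a maximal ideal $\fM$ of $S$ corresponds to a maximal ideal $\fm\supseteq\fp$ of $R$, we have $\widehat{S}_{\fM}=\widehat{R}_{\fm}/\fp\widehat{R}_{\fm}$, and by the discussion of minimal primes of $\fp\widehat{R}_{\fm}$ following Proposition~\ref{two-generic-forcing-algebras} every minimal prime $\fq$ of $\widehat{S}_{\fM}$ pulls back along $\theta:\widehat{R}_{\fm}\to\widehat{R}_{\fm}/\fp\widehat{R}_{\fm}$ to a minimal prime of $\widehat{R}_{\fm}$, with the induced map on residue quotients the identity. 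I expect the only delicate point to be the compatibility check underlying the second paragraph --- that the isomorphism $D\cong D'$ furnished by the hypothesis respects the maps from $R$, so that it genuinely identifies the two base changes of $N\inc M$ --- together with the right-exactness identification at the end of the first paragraph; everything else is bookkeeping, and among the special cases only $S=R/\fp$ appeals to a nontrivial earlier fact, namely that the minimal primes of $\fp\widehat{R}_{\fm}$ are minimal primes of $\widehat{R}_{\fm}$.
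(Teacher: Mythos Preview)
Your proof is correct and follows essentially the same approach as the paper: the key observation is that the hypothesis forces every complete local domain of $S$ to be (isomorphic as an $R$-algebra to) a complete local domain of $R$, after which the conclusion is immediate from the definition; you have simply spelled out in detail what the paper compresses into one sentence. Your treatment of the three special cases likewise matches the paper's, including the appeal to the discussion after Proposition~\ref{two-generic-forcing-algebras} for the quotient by a minimal prime.
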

\begin{proof}  The hypothesis at once implies that every complete local domain of $S$ is a complete local domain of $R$, which 
can be checked instead for the map $R_{\fm} \to S_{\fM}$ or for the induced map of completions.  This immediately yields the
first conclusion.\par
We next consider the statement of the second paragraph.  In the case of localization at a finite set of maximal ideals, each
localization $S_{\fM}$ is isomorphic to the corresponding $R_{\fm}$, from which the result is obvious.  If we consider  $R \to \widehat{R}_{\fm}$,
the induced map of completions is the identity on $\widehat{R}_{\fm}$.  Therefore we need only consider the case where $S = R/P$
for a minimal prime $P$ of $R$.  Then $\fM = \fm/P$ for a maximal ideal of $\fm$ of $R$, and we can reduce to studying the local
case, i.e., we may assume that $(R,\,\fm)$ is local and $S = R/P$ for $P$ minimal.  Then $\widehat{S} \cong \widehat{R}/P\widehat{R}$,
and it suffices to observe that every minimal prime $\fq$ of $P\widehat{R}$ is a minimal prime of $\widehat{R}$, by the discussion
in the paragraph following Propopsition~\ref{two-generic-forcing-algebras}.   \end{proof}

\begin{prop} Let $R$ be a Noetherian ring, let $\fN$ denote the radical of the ideal $(0)$ of $R$,  i.e., the nilradical of $R$, let
$R\red = R/\fN$, and let $N \inc M$ be finitely generated $R$-modules.
\begin{enumerate}
\item[(a)] The robust closure of the ideal $(0)$ is the nilradical $\sqrt{(0)}$ of $R$. The robust closure 
$0\ro_M$ of $0$ in $M$ is contained $\sqrt{(0)}M$.
\item[(b)] Let $R\red=R/\sqrt{(0)}$. Then $N\ro_M$ is the inverse image of the robust closure of 
$\im(R\red\otimes_RN\to R\red\otimes_RM)$ in $R\red\otimes_RM=M/\sqrt{(0)}M$.
\item[(c)] An element $u\in M$ is in $N\ro_M$ if and only if for every minimal prime $\fp$ of $R$, 
the image of $u$ in $M/\fp M$ is in the robust closure of $\im(N/\fp N\to M/\fp M)$ in $M/\fp M$, computed over $R/\fp $.
\end{enumerate}
\end{prop}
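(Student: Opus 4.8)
The plan is to deduce all three parts from one structural fact, recalled in the paragraph following Proposition~\ref{two-generic-forcing-algebras}: the complete local domains of $R$ are exactly those of $R\red$, and these are in turn exactly the complete local domains of the rings $R/\fp$ as $\fp$ ranges over the minimal primes of $R$. (The one extra ingredient needed is that a minimal prime $\fq$ of a completion $\widehat{R_\fm}$ always contracts to a minimal prime of $R_\fm$, which follows from going-down for the faithfully flat map $R_\fm\to\widehat{R_\fm}$.) From this I would first extract a ``detection'' lemma: an element $a\in R$ is nilpotent if and only if its image in every complete local domain of $R$ is $0$. One direction is trivial; for the converse, choose a minimal prime $\fp$ with $a\notin\fp$ and a maximal ideal $\fm\supseteq\fp$, so $a$ is a nonzero element of the domain $(R/\fp)_\fm$, hence not nilpotent in its completion $C$ (since $(R/\fp)_\fm\hookrightarrow C$ is injective, $a^n\ne 0$ for all $n$), hence $a$ avoids some minimal prime $\fq$ of $C$, and $C/\fq$ is then a complete local domain of $R$ in which $a\ne 0$.

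Next I would prove (b). Since a complete local domain $D$ of $R$ is canonically an $R\red$-algebra, $D\otimes_R M = D\otimes_{R\red}M'$ with $M' = R\red\otimes_R M$, and applying $D\otimes_{R\red}(-)$ to the surjection $R\red\otimes_R N\twoheadrightarrow N'$, where $N' := \im(R\red\otimes_R N\to R\red\otimes_R M)$, identifies $\im(D\otimes_R N\to D\otimes_R M)$ with $\im(D\otimes_{R\red}N'\to D\otimes_{R\red}M')$. Because $R$ and $R\red$ have the same complete local domains, unwinding Definition~\ref{defn-robust-closure} shows that for $u\in M$ with image $\bar u\in M'$ one has $u\in N\ro_M$ over $R$ iff $\bar u\in{N'}\ro_{M'}$ over $R\red$; as this depends only on $\bar u$ and holds when $\bar u = 0$, it says exactly that $N\ro_M$ is the inverse image of ${N'}\ro_{M'}$ under $M\to M'$. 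Part (c) comes out the same way once the complete local domains of $R\red$ are partitioned: each such $D$ factors $R\red\to R/\fp\to D$, where $\fp$ is the (minimal, by going-down) contraction to $R\red$ of the minimal prime of $\widehat{(R\red)_\fm}$ killed in forming $D$, so $D$ is a complete local domain of $R/\fp$, and conversely every complete local domain of every $R/\fp$ arises so. Hence $\bar u\in{N'}\ro_{M'}$ over $R\red$ iff for each minimal prime $\fp$ the image of $u$ in $M/\fp M = (R/\fp)\otimes_R M$ lies in the robust closure of $\im(N/\fp N\to M/\fp M)$ computed over $R/\fp$: the ``only if'' is persistence (Proposition~\ref{persist}) along the quotient $R\red\to R/\fp$ by a minimal prime, the ``if'' is immediate, and combining with (b) yields (c).

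For part (a) and the ideal $(0)$: the inclusion $\sqrt{(0)}\subseteq 0\ro_R$ is the detection lemma. For the reverse inclusion, (c) reduces us to showing $0\ro_T = (0)$ for a domain $T$, since then $u\in 0\ro_R$ forces the image of $u$ in each $R/\fp$ to vanish and hence $u\in\bigcap_\fp\fp=\sqrt{(0)}$; and for a domain $T$, $0\ro_T=(0)$ follows from the detection lemma once we know $0\ro_D = (0)$ for every complete local domain $D$. That last fact is a direct computation: for $0\ne a\in D$ a generic forcing algebra for $(D,0,a)$ is $D/(a)$ (up to adjoining variables, irrelevant to robustness by Proposition~\ref{two-generic-forcing-algebras}), and when $\dim D = d\ge 1$ this has dimension $<d$, so $H^d_{\fm}(D/(a)) = 0$ and $D/(a)$ is not even solid over $D$, hence not robust; the cases where $a$ is a unit or $\dim D = 0$ are settled by inspection, using that a $\Q$-sequence must generate a proper ideal. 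For the module statement $0\ro_M\subseteq\sqrt{(0)}M$, by (b) I would reduce to $R$ reduced and prove $0\ro_M = 0$, using $0\roi_M\subseteq 0^{\bigstar}_M$ — a robust algebra over a complete local domain is solid (as noted after Definition~\ref{defn-robust-algebra}), so a forcing algebra witnessing membership in $0\roi_M$ is solid — together with the corresponding vanishing of the solid closure of $0$ in a finitely generated module over a reduced ring, and with the observation that $\sqrt{(0)}M$ maps to $0$ in every complete local domain, so the iteration defining $0\ro_M$ cannot escape it.

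The hard part, and the only place where the ideal case does not simply carry over, is this last module statement: one cannot reduce $0\ro_M$ modulo the minimal primes $\fp$ as in the ideal case, because $\bigcap_\fp\fp M$ can be strictly larger than $\sqrt{(0)}M$ even for reduced $R$. So one genuinely needs the module form of the solid-closure vanishing over a reduced ring — or, absent a convenient reference, a direct structural analysis showing that a generic forcing algebra forcing a nonzero element of $M$ to become $0$ must invert a nonzerodivisor-like element of the relevant complete local domain and hence fail to be solid, the computation in the ideal case ($D/(a)$ with $a\ne 0$) being the prototype.
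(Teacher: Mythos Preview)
Your treatment of (b), (c), and the first sentence of (a) is correct and is essentially the paper's argument: the paper also reduces everything to the observation that the complete local domains of $R$, of $R\red$, and of the various $R/\fp$ coincide, invokes Proposition~\ref{persist}, and for the ideal case computes that the generic forcing algebra for $(D,(0),a)$ with $0\ne a\in D$ is a polynomial ring over $D/aD$, which is not robust. Your ``detection lemma'' and the explicit base-change identifications are exactly what underlies the paper's one-line appeal to Proposition~\ref{persist}.

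The module statement in (a), however, is where your proposal runs into a genuine obstruction --- and it is not a gap you can close. Read as ``$0\ro_M\subseteq\sqrt{(0)}M$'', the claim is \emph{false}. Take $D=k[[x,y,z]]/(x^3+y^3+z^3)$ with $\ch k=p>3$, and $M=D/(x,y)$. Then $z^2\in(x,y)^*=(x,y)\ro$ by Corollary~\ref{cor: tight and robust closures coincide in char p}, so by Proposition~\ref{basic-properties}(b) the image $\overline{z^2}\ne 0$ lies in $0\ro_M$; but $D$ is a domain, so $\sqrt{(0)}M=0$. The lemma you invoke --- that $0^{\bigstar}_M=0$ over a reduced ring --- fails for the same reason: in characteristic $p$ one has $0^{\bigstar}_M=0^*_M$, and $0^*_{D/I}=I^*/I$ is nonzero whenever $I$ is not tightly closed. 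Note that the paper's own proof does not address this second sentence of (a) at all; it proves only the ideal statement and then dispatches (b) and (c). The likely intended assertion is the reverse inclusion $\sqrt{(0)}M\subseteq 0\ro_M$, which is immediate since $\sqrt{(0)}M$ maps to zero in $D\otimes_R M$ for every complete local domain $D$ of $R$. So you were right to flag this as the hard part --- but the resolution is that the inclusion as written cannot hold, not that a more delicate argument is needed.
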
 
\begin{proof} (a) First note that $\fN$ maps to 0 in every complete local domain of $R$, which shows that $\fN \inc (0)\ro$. 
If $r \in R$ is not nilpotent choose a minimal prime $\fp$ of $R$ that does not contain $r$ and a maximal ideal $\fm$ of $R$ 
that contains $\fp$.  Then $r \notin \fp R_{\fm}$ and so $r$ is not nilpotent in $R_{\fm}$.  It follows that the image of $r$ is not 
nilpotent in the completion of $R_{\fm}$,  and so it is not in a minimal prime of the completion.  It therefore suffices to show 
that $(0)$ is solidly closed in a complete local domain $R$.  
This is clear because every nonzero element $x$ in a complete local
domain is a parameter or a unit:  the generic forcing algebra for $(R, (0), x)$ is a polynomial ring over $R/xR$,  and $R/xR$ is obviously
not a robust $R$-algebra). Parts (b) and (c) are clear from Proposition~\ref{persist} and the fact that every map 
$R \to D$, where $D$ is a complete local domain of $R$, factors $R\red \to D$ (respectively,  $R/\fp \to D$) for some
minimal prime $\fp$ of $R$ in such a way that $D$ is a complete local domain of $R\red$ (respectively, of $R/\fp$).  
\end{proof}

\begin{prop} For all ideals $I$ of a Noetherian ring $R$,  $I\ro\subseteq I^{\bigstar}\subseteq \overline{I}$.
\end{prop}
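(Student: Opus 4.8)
The plan is to prove the two inclusions separately. The inclusion $I^{\bigstar}\subseteq\overline I$ is already recorded above, as the first of the two facts about solid closure listed in \S\ref{robcl} (see \cite[Theorem 5.10]{solid-closure}), so all the work is in showing $I\ro\subseteq I^{\bigstar}$. First I would reduce this to the case of a complete local domain. For a general Noetherian ring $R$, both $I\ro$ and $I^{\bigstar}$ are defined by requiring the relevant containment after base change to every complete local domain $D$ of $R$; a complete local domain of $R$ is itself a complete local domain, whose only complete local domain is itself; and $\im(D\otimes_R I\to D\otimes_R R)$ is just the expanded ideal $ID$ in $D=D\otimes_R R$. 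So it suffices to prove: for every complete local domain $D$ and every ideal $J$ of $D$, $J\ro_D\subseteq J^{\bigstar}_D$.

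So fix a complete local domain $(D,\fm)$ of dimension $d$. The key point — already observed immediately after Definition~\ref{defn-robust-algebra} — is that a robust $D$-algebra $S$ is solid: one $\Q$-sequence $x_1,\dots,x_d$ on $S$ forces $H^d_{(x_1,\dots,x_d)}(S)=H^d_{\fm}(S)\ne 0$, which over a complete local domain is exactly solidity by Remark~\ref{solidrem}. Hence, for any ideal $J$ of $D$ one has $J\roi_D\subseteq J^{\bigstar}_D$: if $u\in J\roi_D$ then a (generic) forcing algebra for $(D,J,u)$ is robust, hence solid, hence $u\in J^{\bigstar}_D$; and since $J\subseteq J^{\bigstar}_D$ and $J^{\bigstar}_D$ is an ideal, the submodule of $D$ generated by $J$ together with all such $u$ lies inside $J^{\bigstar}_D$.

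Next I would feed this into the recursion defining robust closure. Since $D$ is Noetherian, $J\ro_D=J\rob m_D$ for $m\gg 0$, so it is enough to prove $J\rob m_D\subseteq J^{\bigstar}_D$ for all $m$ by induction on $m$. The case $m=0$ is trivial. For the inductive step, applying the previous paragraph to the ideal $J\rob m_D$ gives $J\rob{m+1}_D=(J\rob m_D)\roi_D\subseteq(J\rob m_D)^{\bigstar}_D$, and then monotonicity of solid closure, the inductive hypothesis $J\rob m_D\subseteq J^{\bigstar}_D$, and the idempotence $(J^{\bigstar}_D)^{\bigstar}_D=J^{\bigstar}_D$ yield $(J\rob m_D)^{\bigstar}_D\subseteq(J^{\bigstar}_D)^{\bigstar}_D=J^{\bigstar}_D$. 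This gives $J\ro_D\subseteq J^{\bigstar}_D$, hence $I\ro\subseteq I^{\bigstar}$ in general, and combining with the first inclusion completes the proof.

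The only genuine point is the idempotence of solid closure over a complete local domain (together with the fact that $J^{\bigstar}_D$ is an ideal), which is exactly where the domain hypothesis is used; this is part of the basic theory of \cite{solid-closure}, but it can also be seen directly. By Remark~\ref{solidrem}, a solid $D$-algebra $S$ admits a $D$-linear map $\theta\colon S\to D$ with $\theta(1)\ne 0$, and for solid $S,T$ the map $\theta\otimes_D\theta'\colon S\otimes_D T\to D\otimes_D D=D$ sends $1$ to $\theta(1)\theta'(1)\ne 0$ since $D$ is a domain; thus a finite tensor product over $D$ of solid $D$-algebras is solid. Tensoring together the solid forcing algebra witnessing $u\in(J^{\bigstar}_D)^{\bigstar}_D$ with solid forcing algebras for a finite generating set of the ideal $J^{\bigstar}_D$ then produces a single solid forcing algebra witnessing $u\in J^{\bigstar}_D$. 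The remainder is routine bookkeeping with the definitions of the two closure operations.
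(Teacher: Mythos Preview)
Your proof is correct and follows essentially the same route as the paper's: cite \cite[Theorem 5.10]{solid-closure} for $I^{\bigstar}\subseteq\overline I$, reduce the remaining inclusion to a complete local domain, and induct on $j$ using that a robust algebra over a complete local domain is solid. The paper's proof is terser and leaves the idempotence of solid closure implicit in the phrase ``by induction on $j$ this reduces to the case $j=1$''; you have correctly identified this as the point needing justification and supplied a clean direct argument for it via tensor products of solid algebras, which is a helpful elaboration rather than a different approach.
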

\begin{proof} The fact that $I^{\bigstar}\subseteq \overline{I}$ is  \cite[Theorem 5.10]{solid-closure}. Thus, we only need to show
that if $u \in I\ro$ then $u \in I^{\bigstar}$, and it suffices to show this after base change to a complete local domain
of $R$, where the issue becomes showing that $I\rob j \inc I^{\bigstar}$.  By induction on $j$ this reduces to the
case where $j =1$,  where it follows from the fact that a robust algebra over a complete local domain is solid:  see the
comment following Definition~\ref{defn-robust-algebra}.
\end{proof}

\begin{prop} Let $R$ be a Noetherian ring.
If $I$ is principal and every module-finite extension of a complete local domain $D$ of $R$ is a robust $D$-algebra,
then $I\ro=\overline{I}$.  In particular, if  $R\red$ contains a field or $R$ has dimension at most 2 and $I$ is principal,
then $I\ro = \overline{I}$. \end{prop}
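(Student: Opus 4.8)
The plan is to prove the two inclusions $I\ro\subseteq\overline I$ and $\overline I\subseteq I\ro$ separately. The first is already established: it is the preceding proposition, $I\ro\subseteq I^{\bigstar}\subseteq\overline I$. So the work is entirely in showing $\overline I\subseteq I\ro$. Write $I=fR$ and fix $u\in\overline I$, so that $u^n+a_1u^{n-1}+\cdots+a_n=0$ with $a_i\in f^iR$. By the definition of robust closure for a general Noetherian ring (Definition~\ref{defn-robust-closure}), and since here $M=R$ and $N=I$, it suffices to fix a complete local domain $D$ of $R$ and show that the image $\bar u$ of $u$ in $D$ lies in $(fD)\ro_D$, where I also write $f$ for its image in $D$. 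If $f=0$ in $D$, then $\bar u$ satisfies an integral equation over $(0)$ in the domain $D$, so $\bar u=0$ and there is nothing to prove; thus assume $f\neq 0$ in $D$.

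Pushing the integral equation forward shows $\bar u\in\overline{fD}$, and writing $a_i=f^ib_i$ and dividing the equation by $f^n$ in the fraction field of $D$ shows that $\bar u/f$ is integral over $D$. Hence $D':=D[\bar u/f]$, a subring of the fraction field of $D$, is a module-finite extension of $D$, so by hypothesis $D'$ is a robust $D$-algebra. Since $\bar u=f\cdot(\bar u/f)\in fD'$, the $D$-algebra $D'$ is a forcing algebra for the triple $(D,fD,\bar u)$ (Definition~\ref{defforce}). By the parenthetical in Definition~\ref{defn-robust-closure} (and the remark following it), the existence of a robust forcing algebra for $(D,fD,\bar u)$ already places $\bar u\in(fD)\roi_D\subseteq(fD)\ro_D$. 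As $D$ was an arbitrary complete local domain of $R$, this proves $u\in I\ro$, hence $I\ro=\overline I$.

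For the final sentence I would check that the displayed hypothesis holds in the two stated cases. If $R\red$ contains a field, then every complete local domain $D$ of $R$ contains a field (the map $R\to D$ factors through $R\red$ since $D$ is a domain), so $D\red=D$ contains a field, and Proposition~\ref{mfcon} applied with $D$ in place of $R$ shows that every module-finite extension of $D$ is robust over $D$. If instead $\dim R\leq 2$, then every complete local domain $D$ of $R$ has $d:=\dim D\leq 2$; since $D$ is its own unique complete local domain, a module-finite extension $S$ of $D$ is robust over $D$ exactly when every system of parameters $\underline x$ of $D$ is a $\Q$-sequence in $S$, and this follows as in the proof of Proposition~\ref{mfcon}: choosing a minimal prime $Q$ of $S$ with $\dim S/Q=d$, the image of $\underline x$ in the complete local domain $S/Q$ is a system of parameters, hence a $\Q$-sequence by Theorem~\ref{equirob}, so $\fh^d_{\underline x}(S)\geq\fh^d_{\underline x}(S/Q)=1$ by \cite[Proposition 2.5]{quasi-length}, while $\fh^d_{\underline x}(S)\leq 1$ always by \cite[Proposition 1.2(a)]{quasi-length}.

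The routine parts — persistence of integral dependence under $R\to D$, the identification of $D[\bar u/f]$ as a robust forcing algebra, and the reduction to complete local domains — I do not expect to cause any trouble. The one point needing a little care is the verification of the hypothesis in the dimension $\leq 2$ case: Theorem~\ref{equirob} only literally asserts that $S/Q$ is robust as an algebra over itself, and one must pass from there to the statement that $\underline x$ is a $\Q$-sequence in the (possibly semilocal, possibly non-reduced) module-finite extension $S$, which is precisely the step carried out inside the proof of Proposition~\ref{mfcon} and which rests on $\fh^d_{\underline x}(S)\geq\fh^d_{\underline x}(S/Q)$ together with the general bound $\fh^d_{\underline x}(S)\leq 1$.
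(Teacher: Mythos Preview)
The paper states this proposition without proof, so there is nothing to compare against; your argument is correct and is exactly the natural one the authors presumably had in mind. You use the preceding proposition for $I\ro\subseteq\overline I$, the standard fact that $u\in\overline{fD}$ forces $u/f$ integral over $D$ to produce a module-finite (hence, by hypothesis, robust) forcing algebra $D[\bar u/f]$, and then verify the hypothesis in the two special cases via Proposition~\ref{mfcon} and Theorem~\ref{equirob} just as those results are set up to be used.
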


\begin{thm}\label{regrobcl} Let $R$ be a regular Noetherian ring.  Then for any two finitely generated $R$-modules $N \inc M$,
$N\ro_M = N$.  \end{thm}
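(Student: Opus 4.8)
The plan is to reduce to a complete regular local ring and to the zero submodule, and then to identify the generic forcing algebra of a would‑be new element by choosing a minimal presentation and inspecting its fibre over the maximal ideal.

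\textbf{Reductions.} By Proposition~\ref{basic-properties}(a) it is enough to prove $N\ro_M\subseteq N$, and by Proposition~\ref{basic-properties}(b) this is the same as $0\ro_{M/N}=0$; so it suffices to show $0\ro_{M'}=0$ for every finitely generated module $M'$ over an arbitrary regular Noetherian ring. Robust closure is computed one complete local domain of $R$ at a time (Definition~\ref{defn-robust-closure}), the complete local domains of a regular ring $R$ are exactly the completions $\widehat{R_{\fm}}$, which are again complete regular local, and a finitely generated $R$-module injects into the product of its completions $\widehat{(-)_{\fm}}$; hence we may assume $(R,\fm,k)$ is a complete regular local ring of dimension $d$. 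Since the increasing chain $0\rob j$ stabilizes, it is enough to prove $0\roi_M=0$: that is, if $u\in M$ is such that the generic forcing algebra $S$ for $(M,0,u)$ is robust over $R$, then $u=0$.

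\textbf{The generic forcing algebra.} Fix such a $u$. By the remark following Definition~\ref{defn-robust-closure}, whether the generic forcing algebra is robust does not depend on the presentation of $M$ or on the representative chosen for $u$, so I would take a \emph{minimal} presentation $R^{\beta}\xrightarrow{A}R^{\alpha}\to M\to 0$, so that every entry $a_{ij}$ of $A$ lies in $\fm$, and a vector $r=(r_1,\dots,r_\alpha)\in R^{\alpha}$ mapping to $u$; then $S=R[Z_1,\dots,Z_\beta]/(r_i-\sum_j a_{ij}Z_j:\ 1\le i\le\alpha)$. Choose $x_1,\dots,x_d$ a minimal generating set of $\fm$ (a regular system of parameters), so $(x_1,\dots,x_d)=\fm$. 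Two facts control everything: reducing the defining relations modulo $\fm$ kills all $a_{ij}$, so $S/\fm S=k[Z_1,\dots,Z_\beta]/(\bar r_1,\dots,\bar r_\alpha)$ with $\bar r_i\in k$; and, since $(x_1,\dots,x_d)=\fm$, the only nonzero cyclic $R$-module killed by $(x_1,\dots,x_d)$ is $R/\fm$, so the $(x_1,\dots,x_d)$-quasilength of any $R$-module killed by $\fm$ equals its $k$-vector space dimension.

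\textbf{Trichotomy and conclusion.} If some $\bar r_i\ne0$, then $S/\fm S=0$, so $\fm S=S$; since some power of $\fm$ lies in $(x_1^{t_1},\dots,x_d^{t_d})$, this forces $S/(x_1^{t_1},\dots,x_d^{t_d})S=0$ for every $\underline t$, hence $\fh^d_{\underline x}(S)=0$, so $x_1,\dots,x_d$ is not a $\Q$-sequence in $S$ and $S$ is not robust — contradiction. So all $\bar r_i=0$ (equivalently $u\in\fm M$) and $S/\fm S=k[Z_1,\dots,Z_\beta]$. If $\beta\ge1$ this is infinite‑dimensional over $k$, and since $S/(x_1^{t_1},\dots,x_d^{t_d})S$ surjects onto $S/\fm S$ its $(x_1,\dots,x_d)$-quasilength is infinite for every $\underline t$, so $\fh^d_{\underline x}(S)=\infty\ne1$ and again $S$ is not robust. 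Hence $\beta=0$, so $M=R^{\alpha}$ is free, in particular torsion‑free. Finally, a robust algebra over the complete local domain $R$ is solid (the comment following Definition~\ref{defn-robust-algebra}), so there is an $R$-linear $\theta\colon S\to R$ with $\theta(1)\ne0$ (Remark~\ref{solidrem}); applying $\theta\otimes_R\mathrm{id}_M$ to the forcing relation $1\otimes u=0$ in $S\otimes_R M$ yields $\theta(1)\,u=0$ in $M$, whence $u=0$ because $R$ is a domain and $M$ is free. This proves $0\roi_M=0$ and hence the theorem.

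\textbf{Where the work is.} The only genuinely delicate step is the middle case of the trichotomy — that a forcing algebra whose fibre $S/\fm S$ over the maximal ideal is infinite‑dimensional cannot be robust — and this is exactly where regularity is indispensable: it permits the choice of parameters $x_1,\dots,x_d$ with $(x_1,\dots,x_d)=\fm$, so that $(x_1,\dots,x_d)$-quasilength agrees with ordinary length and a ``large'' fibre forces $\fh^d_{\underline x}(S)=\infty$. Over a non‑regular local ring the quasilength with respect to a system of parameters is strictly smaller than the length and the argument breaks down, which is consistent with robust closure being nontrivial there (it equals tight closure in characteristic $p$). The remaining steps — the reduction to complete regular local rings, the passage to a minimal presentation, and the elementary facts $\fm^N\subseteq(x_1^{t_1},\dots,x_d^{t_d})$ and $\theta(1)u=0$ — are routine given what is already available.
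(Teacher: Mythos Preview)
Your reduction to the complete regular local case and the passage to $0\roi_M$ are fine, as are cases one and three of your trichotomy. The fatal error is in case two, where you claim $\fh^d_{\underline x}(S)=\infty$. The content that governs whether $\underline x$ is a $\Q$-sequence in $S$ is computed \emph{over $S$}, not over $R$: the filtration factors are cyclic $S$-modules killed by $(\underline x)S$, not cyclic $R$-modules killed by $\fm$. Over $S$, the module $S/\fm S=k[Z_1,\dots,Z_\beta]$ is cyclic (generated by $1$) and killed by $\fm S$, so its $(\underline x)S$-quasilength is $1$, not $\infty$. More generally, the monomial filtration of $S/(x_1^{t_1},\dots,x_d^{t_d})S$ by the images of the $x_1^{e_1}\cdots x_d^{e_d}$ shows $\cL_{(\underline x)S}\bigl(S/(x_1^{t_1},\dots,x_d^{t_d})S\bigr)\le t_1\cdots t_d$, so $\fh^d_{\underline x}(S)\le 1$ always. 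Your inference that an infinite-dimensional fibre forces infinite content rests on the sentence ``the only nonzero cyclic $R$-module killed by $(x_1,\dots,x_d)$ is $R/\fm$,'' which is true but irrelevant: the filtration pieces are quotients of $S/(\underline x)S$, and that ring is much larger than $k$.

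The paper's proof avoids this issue entirely. Instead of analysing the fibre of a general forcing algebra, it first enlarges $N$ to $N'$ maximal not containing $u$, so that $M/N'$ has finite length with $u$ spanning the socle; embeds $M/N'$ into $R/(x_1^t,\dots,x_d^t)$ with $u$ mapping to the socle generator $(x_1\cdots x_d)^{t-1}$; and then observes that the generic forcing algebra for $\bigl(R/I_t,\,0,\,(x_1\cdots x_d)^{t-1}\bigr)$ has an explicit monomial filtration of $S/(x_1^t,\dots,x_d^t)S$ with only $t^d-1$ factors, so $\underline x$ is not a $\Q$-sequence in $S$. The key manoeuvre is reducing to a single concrete triple where a short filtration can be written down, rather than trying to rule out robustness for an arbitrary forcing algebra.
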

\begin{proof} Suppose $u \in M$ is such that $u \in N\ro_M \backslash N$.  Localize at a maximal ideal $\fm$ of $R$ such that
$u \notin N_{\fm}$ and complete. Thus, we may assume without loss of generality that $R$ is a complete regular local ring.

Choose $N' \supseteq N$ maximal in $M$ with respect to 
not containing $u$.  Then by Proposition~\ref{basic-properties}(d) we still have $u \in {N'}\ro_M$.  But then  
the image of $u$ is in every submodule of $M/N'$.
We may replace $M,\,N$, and $u$ by $M/N',\, 0$ and the image of $u$ in $M/N'$.  Hence, we may assume that
$M$ has finite length,  $N = 0$, and that $u$ generates the socle.  Let $\vect x d$ be a regular system of parameters for $R$.
Then the injective hull of $R/\fm$ over $R$ is the directed union of modules of the form $R/I_t$ where $I = (x_1^t,\, \ldots,\, x_d^t)R$, 
and $M$ is contained in $R/I_t$ for $t \gg 0$.  Moreover, up to a unit factor, $u$ must be the socle generator 
$(x_1 \cdots x_d)^{t-1}$ for $R/I_t$.   It will therefore suffice to show that $0 = 0\ro_{R/I_t}$, and for this it will suffice
to show that $0\roi = 0$.  This will follow if we can show that there is no forcing algebra that is robust for
the triple $(R/I_t, 0, v)$ where $v$ is a nonzero element of $R/I_t$.  Note that any such forcing algebra must also
be a forcing algebra for $(R/I_t, 0, u)$,  since $u$ is a multiple of $v$ in $R/I_t$.   It therefore suffices to show that the
generic forcing algebra $S  =R[\vect Z d]/(x_1^{t-1}\cdots x_d^{t-1} - \sum_{i=1}^d Z_ix_i^t)$ is not robust over $R$.  But
this is clear, $S/(x_1^t, \cdots, \, x_d^t)$ will have a filtration using ideals generated by monomials in the $x_i$ of
length $t^d-1$, and so $\vect x d$ is not a Q-sequence in $S$.   \end{proof}

\begin{prop}
Let $R\to S$ be a homomorphism of Noetherian. Suppose that 
\begin{enumerate}
\item[($\dagger$)] $\MaxSpec(S) \to \MaxSpec(R)$
is surjective and that for every minimal prime $\fp$ of $R$ and maximal ideal
$\fm$ of $R$ with $\fp \inc \fm$,  ht$(\fm S/\fp S) \geq $ht$(\fm/\fp)$. 
\end{enumerate}  
Let $N \inc M$ be finitely generated $R$-modules.  If $u \in M$ is such that $1 \otimes u$ is in the robust closure
of the image of $S\otimes_RN$ in $S\otimes_R N$,  then $u \in N\ro_M$.  In particular,  for an ideal $I \inc R$,
the contraction to $R$ of $(IS)\ro$, calculated over $S$, is contained in $I\ro$.\par
The hypothesis $(\dagger)$ holds if $S$ is faithfully flat over $R$, or if $R$ is universally catenary and $S$ is an integral extension of $R$,  
or if $R \to S$ is a local homomorphism,  $R$ is equidimensional,
and the image of one system of parameters of $R$ is a system of parameters in $S$.  
\end{prop}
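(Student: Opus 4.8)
The plan is to reduce the assertion to a statement about complete local domains, to use $(\dagger)$ to produce, for each complete local domain of $R$, a compatible complete local domain of $S$, and then to transport robust‑closure membership across the resulting homomorphism. For an $R$‑algebra $A$ write $M_A=A\otimes_R M$ and $N_A=\im(A\otimes_R N\to M_A)$. By Definition~\ref{defn-robust-closure} it suffices to fix a complete local domain $D$ of $R$ and show that the image $\bar u$ of $u$ lies in $(N_D)\ro_{M_D}$ over $D$. Write $D=\widehat{R_{\fm}}/\fq$ with $\fm$ maximal and $\fq$ a minimal prime of $\widehat{R_{\fm}}$, and set $\fp=\fq\cap R$; by the paragraph following Proposition~\ref{two-generic-forcing-algebras}, $\fp$ is a minimal prime of $R$ with $\fp\inc\fm$ and $\dim D\le\hgt(\fm/\fp)$. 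Using surjectivity of $\MaxSpec(S)\to\MaxSpec(R)$, I would choose $\fM\in\MaxSpec(S)$ lying over $\fm$, giving a local homomorphism $\widehat{R_{\fm}}\to\widehat{S_{\fM}}$; passing the inequality $\hgt(\fm S/\fp S)\ge\hgt(\fm/\fp)$ through localization at $\fM$ and completion (neither of which lowers the height of an ideal) gives $\hgt\bigl(\fm\widehat{S_{\fM}}/\fp\widehat{S_{\fM}}\bigr)\ge\hgt(\fm/\fp)\ge\dim D$. This lets me select a minimal prime $\fQ$ of $\widehat{S_{\fM}}$ with $\fq\widehat{S_{\fM}}\inc\fQ$ and such that $\fm$ still has height at least $\dim D$ modulo $\fQ$; then $E:=\widehat{S_{\fM}}/\fQ$ is a complete local domain of $S$, the map $\widehat{R_{\fm}}\to\widehat{S_{\fM}}$ descends to a local $R$‑algebra homomorphism $\phi\colon D\to E$, and a system of parameters of $D$ maps under $\phi$ to part of a system of parameters of $E$. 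Since $E$ is a complete local domain of $S$, the hypothesis gives $\bar u\in(N_E)\ro_{M_E}$ over $E$, and $M_E=E\otimes_D M_D$ while $N_E=\im(E\otimes_D N_D\to M_E)$; so it remains to descend this membership along $\phi$.

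For the descent I would induct on $j$ to show that if $v\in M_D$ has $1\otimes v\in(N_E)\rob{j}_{M_E}$ over $E$ then $v\in(N_D)\rob{j}_{M_D}$ over $D$ (after possibly enlarging $N_D$). The case $j=0$ asserts that $N_E\cap M_D=N_D$, which in the settings of interest follows from flatness or integrality of $R\to S$. For the inductive step it is enough to handle $j=1$: a robust $E$‑algebra $T$ witnesses $1\otimes v\in(N_E)\roi_{M_E}$, i.e.\ $1\otimes v\in\im(T\otimes_E N_E\to T\otimes_E M_E)=\im(T\otimes_D N_D\to T\otimes_D M_D)$, and one must see that $T$ is robust as a $D$‑algebra. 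Here a system of parameters $\ux$ of $D$ maps, through $D\to E\to T$, to a sequence that is part of a system of parameters of $E$; robustness of $T$ over $E$ makes that system of parameters of $E$ a $\Q$‑sequence in $T$, and since being a $\Q$‑sequence is intrinsic to $T$ and the chosen elements, $\ux$ becomes a $\Q$‑sequence in $T$, so $T$ is robust over $D$. This gives $v\in(N_D)\roi_{M_D}$, completing the induction; taking $j\gg0$ finishes the proof, and the ``in particular'' is the case $N=I$, $M=R$.

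I expect the main obstacle to be this transport step --- specifically the passage of the $\Q$‑sequence property (equivalently, of robustness of $T$) from $E$ down to $D$, i.e.\ the fact that the relevant ``subsystem'' of parameters of $E$ coming from $D$ is still a $\Q$‑sequence in $T$. This is exactly where the subtle behavior of $\fh^d_{\underline x}$ and quasilength developed in \cite{quasi-length} has to be invoked, and it is what the height half of $(\dagger)$ is designed to make available. For the final sentence of the statement: faithful flatness gives $\fm S\ne S$, hence a maximal ideal of $S$ over each maximal $\fm$ of $R$, and heights of contracted primes do not drop along a flat map, yielding the height inequality modulo $\fp$; if $S$ is integral over the universally catenary ring $R$, lying‑over gives the surjectivity and the dimension formula for universally catenary rings gives $\hgt(\fm S/\fp S)=\hgt(\fm/\fp)$; and if $R\to S$ is local with $R$ equidimensional and the image of some system of parameters of $R$ a system of parameters of $S$, then the surjectivity is automatic and, modulo $\fp$, the image of that system of parameters is still a system of parameters, so $\fm S/\fp S$ is primary to the maximal ideal of $S/\fp S$ and $\hgt(\fm S/\fp S)=\dim(S/\fp S)\ge\dim R=\hgt(\fm/\fp)$ by equidimensionality of $R$.
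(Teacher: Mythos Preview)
Your overall architecture is exactly the paper's: reduce to complete local domains, use $(\dagger)$ to produce for each complete local domain $D$ of $R$ a complete local domain $E=D'$ of $S$ together with a factorization $R\to D\to E$ along which every system of parameters of $D$ extends to one of $E$, and then argue that every robust $E$-algebra is a robust $D$-algebra. The paper's proof opens with precisely this reduction and asserts precisely this last implication before breaking off mid-sentence (``and we may apply Lemm''); the ideal-theoretic paragraph that follows in the paper does not pass to complete local domains and tacitly replaces $(IS)\ro$ by $(IS)\roi$, so it does not stand on its own. You are therefore supplying what the paper leaves out, and your verification of $(\dagger)$ in the three listed cases (which the paper does not address at all) is reasonable in outline.

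Two places need real work. First, the sentence ``since being a $\Q$-sequence is intrinsic to $T$ and the chosen elements, $\ux$ becomes a $\Q$-sequence in $T$'' is not an argument: intrinsicness is trivially true, but what you actually need is that a subset of a $\Q$-sequence is again a $\Q$-sequence. This is true and short: if $\cL_{(x_1,\dots,x_{d-1})}\bigl(T/(x_1^t,\dots,x_{d-1}^t)\bigr)=h$, then refining each cyclic factor by the filtration by powers of $x_d$ gives $\cL_{(x_1,\dots,x_d)}\bigl(T/(x_1^t,\dots,x_d^t)\bigr)\le ht$, so $h\ge t^{d-1}$ whenever $x_1,\dots,x_d$ is a $\Q$-sequence. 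That is the content of ``robust over $E$ $\Rightarrow$ robust over $D$'', and you should state it rather than gesture at \cite{quasi-length}. Second, your induction on $j$ is not correctly set up. At $j=0$ you should not aim for $N_E\cap M_D=N_D$ (this is neither available from $(\dagger)$ nor needed, since the target is $(N_D)\ro$, not $N_D$); instead one observes that $E$ itself is a forcing algebra for $(M_D,N_D,v)$, which puts $v\in(N_D)\roi$ \emph{provided} $E$ is robust over $D$ --- and that, via the subsequence lemma, reduces to $E$ being robust over itself, i.e.\ to Conjecture~\ref{robcj}. More seriously, for $j\ge 2$ the $E$-submodule $(N_E)\rob{j-1}\subseteq M_E$ need not be the $E$-span of its intersection with the image of $M_D$, so a robust $E$-forcing algebra for $\bigl(M_E,(N_E)\rob{j-1},1\otimes v\bigr)$ is not visibly a forcing algebra for $\bigl(M_D,\text{preimage of }(N_E)\rob{j-1},v\bigr)$, and your inductive hypothesis does not apply directly. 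The paper's proof does not handle this iteration either; making the descent through the $\rob{j}$ rigorous is the genuine content still to be supplied.
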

\begin{proof} It will suffice to show that for every complete local domain $D$ of $R$ there is a complete local domain $D'$ of $S$
such that $R \to S \to D'$ factors  $R \to D \to D'$ and every system of parameters for $D$ is part of a system of parameters for $D'$.
It follows that every robust $D'$-algebra is a robust $D$-algebra and we may apply Lemm
Assume that $I=(g_1,\dots,g_n)$. Since $\varphi(u)\in (IS)\ro$, each system of parameters of $S$ is a $\Q$-sequence in $\frac{S[Z_1,\dots,Z_n]}{(\varphi(u)-\sum^n_{i=1}
\varphi(g_i)Z_i)}$. It is clear that $\varphi:R\to S$ induces a ring homomorphism 
\[\varphi':\frac{R[Z_1,\dots,Z_n]}{(u-\sum^n_{i=1}
g_iZ_i)}\to \frac{S[Z_1,\dots,Z_n]}{(\varphi(u)-\sum^n_{i=1}
\varphi(g_i)Z_i)}.\]
Since the image of each system of parameters of $R$ becomes a (partial) system of parameters of $S$, we know that each system of parameters is also a $\Q$-sequence in $\frac{R[Z_1,\dots,Z_n]}{(u-\sum^n_{i=1}g_iZ_i)}$. This proves that $u\in I\ro$.
\end{proof}

\section{Comparison with tight closure in positive characteristic}\label{charp} 
In this section, we compare robust closure with tight closure in positive characteristic. In particular, we prove the following theorem.
\begin{thm}
\label{solid-implies-content-1}
Let $(R,\fm)$ be a $d$-dimensional complete local domain of characteristic $p>0$. Let $I$ be an ideal of $R$, $u$ an element of $R$, and $S$ a generic forcing algebra for $(R,I,u)$. Then $H^d_{\fm}(S)\neq 0$ if and only if the image of each system of parameters for $R$ in $S$ forms a $\Q$-sequence in $S$.
\end{thm}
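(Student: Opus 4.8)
The plan is to prove the two implications separately. The implication ``the image of every system of parameters of $R$ is a $\Q$-sequence in $S\Rightarrow H^d_{\fm}(S)\neq 0$'' is routine: fixing one system of parameters $\ux=x_1,\dots,x_d$ of $R$, the hypothesis gives $\fh^d_{\ux}(S)=1\neq 0$, which forces $H^d_{(\ux)S}(S)\neq 0$ by (the contrapositive of) \cite[Proposition 2.2]{quasi-length}; since $\sqrt{(\ux)S}=\sqrt{\fm S}$, this is $H^d_{\fm}(S)\neq 0$. The substance is the reverse implication.

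So suppose $H^d_{\fm}(S)\neq 0$. Because $R$ is a complete local domain, \cite[Corollary 2.4]{solid-closure} shows that $S$ is solid over $R$, and since $S$ is a forcing algebra for $(R,I,u)$ this says $u\in I^{\bigstar}$; as a complete local domain of characteristic $p$ has a completely stable test element, $I^{\bigstar}=I^{*}$ by \cite[Theorem 8.6]{solid-closure}, so $u\in I^{*}$. It remains to prove $\fh^d_{\ux}(S)=1$ for every system of parameters $\ux$ of $R$. Since $\fh^d_{\ux}(S)\leq 1$ always --- the monomial filtration of $S/(\ux)_{\underline t}S$ uses at most $t_1\cdots t_d$ factors, each a homomorphic image of $S/(\ux)S$ --- it is enough to produce the lower bound $\fh^d_{\ux}(S)\geq 1$.

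To get this from $u\in I^{*}$ I would fix a test element $c\in R\setminus\{0\}$ (so $cu^{q}\in I^{[q]}$ for all $q=p^{e}$, in particular $cu\in I$) and invoke the fact that in characteristic $p$ an instance of tight closure in an excellent local domain can be forced into a balanced big Cohen--Macaulay algebra (Hochster and Huneke): there is a balanced big Cohen--Macaulay $R$-algebra $B$ with $u\in IB$, so writing $u=\sum_i f_i b_i$ with $b_i\in B$ and sending $Z_i\mapsto b_i$ defines an $R$-algebra homomorphism $S\to B$. Content is monotone along such a map: applying $B\otimes_S(-)$ to a filtration of $S/(\ux)_{\underline t}S$ by $r$ cyclic $S$-modules killed by $(\ux)S$, and passing to images in $B/(\ux)_{\underline t}B$, yields a filtration of $B/(\ux)_{\underline t}B$ by at most $r$ cyclic $B$-modules killed by $(\ux)B$, so $\fh^d_{\ux}(B)\leq\fh^d_{\ux}(S)$. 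Finally, since $\ux$ is a system of parameters of $R$ it is a regular sequence on $B$ generating a proper ideal, and for such a sequence $\fh^d_{\ux}(B)\geq 1$: by induction on $d$ one shows $\cL_{(\ux)B}(B/(\ux)_{\underline t}B)\geq t_1\cdots t_d$, the base case $d=1$ being the remark that a filtration of $B/x_1^{t}B$ by $r$ cyclic modules killed by $x_1$ forces $x_1^{r}(B/x_1^{t}B)=0$, i.e.\ $x_1^{r}\in x_1^{t}B$, whence $r\geq t$ because $x_1$ is a nonzerodivisor and a non-unit on $B$. This is the big Cohen--Macaulay analogue of the fact (\cite[Theorem 4.7]{quasi-length}, and the corollary following Proposition~\ref{content-free}) that a system of parameters --- or part of one --- in an equicharacteristic local ring is a $\Q$-sequence. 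Chaining $1\geq\fh^d_{\ux}(S)\geq\fh^d_{\ux}(B)\geq 1$ finishes the proof that $S$ is robust.

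The step I expect to be the main obstacle is the passage from $u\in I^{*}$ to an $R$-algebra map $S\to B$ with $B$ big Cohen--Macaulay --- equivalently, that the generic forcing algebra of an instance of tight closure is a seed. This is exactly where the tight closure hypothesis does essential work: a merely solid forcing algebra need not be a seed, which is precisely the content of Roberts' example in characteristic $0$ (see \S\ref{intro}); and making it rigorous rests on the theory of phantom homology of Hochster and Huneke. A secondary point requiring care is the inductive step ($d\geq 2$) in the ``regular sequence is a $\Q$-sequence'' assertion over an arbitrary, possibly non-Noetherian and non-local, ring: the crude nilpotency bound $x_1^{r}\cdots x_d^{r}\in(x_1^{t_1},\dots,x_d^{t_d})B$ is only linear in $\underline t$, so instead one filters $B/(\ux)_{\underline t}B$ along the $x_1$-adic filtration --- whose successive quotients are isomorphic to $B/(x_1,x_2^{t_2},\dots,x_d^{t_d})B$ --- and must propagate the count through the induction. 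One could alternatively try to bypass big Cohen--Macaulay algebras and prove $\fh^d_{\ux}(S)\geq 1$ directly by a Frobenius argument in the style of \cite{quasi-length}, exploiting the relations $\sum_i f_i^{q}(cZ_i^{q}-a_{i,q})=0$ that hold in $S$ (where $cu^{q}=\sum_i a_{i,q}f_i^{q}$); but the evident such attempt runs into the difficulty that $S$ is not local and that $(\ux)S$ has height only $d-1$, so no multiplicity of $S$ is directly at hand.
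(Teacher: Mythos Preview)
Your proposal is correct, but the paper takes a genuinely different, more self-contained route.

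You reduce the hard direction to the existence of a big Cohen--Macaulay $R$-algebra $B$ with $u\in IB$, then use monotonicity of content along $S\to B$ together with the fact that a regular sequence in a proper ideal is a $\Q$-sequence (this last fact is indeed \cite[Proposition~1.2(c)]{quasi-length}, valid without Noetherian hypotheses, so your worry about the inductive step is handled there). The step you flag as the obstacle --- that the forcing algebra of a tight closure instance is a seed --- is precisely Dietz's theorem that over a complete local domain of characteristic $p$ an $R$-algebra is a seed if and only if it is solid; the phantom-homology attribution is morally right, but the clean statement you need is Dietz's. So your argument works, at the cost of importing that machinery.

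The paper instead does exactly the ``direct Frobenius argument'' you sketch and then set aside, but with a twist you did not try: rather than seeking a multiplicity on $S$, it pushes the hypothetical short filtration of $S/(x_1^{q_0},\dots,x_d^{q_0})S$ \emph{back into $R^{1/q}$}. Concretely, from $cu^q=\sum r_l g_l^q$ one gets $c^{1/q}u=\sum r_l^{1/q}g_l$ in $R^{1/q}$, hence an $R$-algebra map $R[Z_1,\dots,Z_n]\to R^{1/q}$ sending $Z_l\mapsto c^{-1/q}r_l^{1/q}$. Substituting into the filtration equations and clearing the $c^{-1/q}$-denominators by graded powers of $c^{N/q}$ produces a filtration of $R^{1/q}/(x_1^{q_0},\dots,x_d^{q_0})R^{1/q}$ with $2h+1$ factors, alternately killed by $(\ux)$ and by $(c^{hN/q},x_1^{q_0},\dots,x_d^{q_0})$. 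Comparing lengths, dividing by $(qq_0)^d$, and letting $q\to\infty$ gives $e_{\ux}(R)\le (h/q_0^d)\,e_{\ux}(R)$, contradicting $h<q_0^d$. The multiplicity invoked is that of $R$, not of $S$, which is what dissolves the difficulty you anticipated.

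In short: your route is cleaner to state but outsources the hard step to the seed theorem; the paper's route is more hands-on and entirely internal to Frobenius and multiplicity, and it is precisely the direct argument you thought would fail --- the missing idea being to map $S$ to $R^{1/q}$ rather than to analyze $S$ itself.
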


\begin{proof}[Proof]
The sufficiency is clear.\par
Assume that $H^d_{\fm}(S)\neq 0$. From the local cohomology criterion for solid closure (\cite{solid-closure}), we have that $u\in I^{\bigstar}$. And by \cite[Theorem 8.6(b)]{solid-closure} $I^{\bigstar}=I^*$ under our hypotheses, hence $u$ is an element of $I^*$, i.e., for $q\gg 0$, we have $cu^q\in I^{[q]}=(g^q_1,\dots,g^q_n)$ where $\{g_1,\dots,g_n\}$ is a set of generators of $I$. We need to prove that the image of a system of parameters $x_1,\dots,x_d$ in $S=\frac{R[Z_1,\dots,Z_n]}{u-\sum^n_{l=1}Z_lg_l}$ forms a $\Q$-sequence in $S$. Assume otherwise, then there would exist an integer $q_0=p^{e_0}$ and a short filtration of $\frac{S}{(x^{q_0}_1,\dots,x^{q_0}_d)}$ with $h<q^d_0$ factors. This short filtration would be given by $\tilde{s}_1,\dots,\tilde{s}_h\in S$ ($\tilde{s}_h=1$) satisfying
\begin{align}
(x_1,\dots,x_d) \tilde{s}_1 &\subseteq (x^{q_0}_1,\dots,x^{q_0}_d)\notag\\
(x_1,\dots,x_d) \tilde{s}_{i+1}&\subseteq (\tilde{s}_1,\dots,\tilde{s}_i)+(x^{q_0}_1,\dots,x^{q_0}_d), 1\leq i\leq h-1\notag
\end{align} 
Pick and fix $s_i$ a lifting of $\tilde{s}_i$ in $R[Z_1,\dots,Z_n]$. Then there would be the following equations
\begin{equation}
\label{filtration-equations}
x_js_i=\sum^{i-1}_{k=1}\alpha_{ijk}s_k+\sum^d_{k=1}\beta_{ijk}x^{q_0}_k+\gamma_{ij}(u-\sum^n_{l=1}Z_lg_l),\ \alpha_{ijk},\beta_{ijk},\gamma_{ij}\in R[Z_1,\dots,Z_n] 
\end{equation}
where $\alpha_{1jk}=0$. Since $cu^q\in I^{[q]}=(g^q_1,\dots,g^q_n)$, we have $c^{\frac{1}{q}}u=\sum^n_{l=1}r^{\frac{1}{q}}_lg_l$ in $R^{\frac{1}{q}}$ for some $r_i\in R$. After the substitution $Z_l=c^{-\frac{1}{q}}r^{\frac{1}{q}}_l$ in each equation appearing in (\ref{filtration-equations}), we set $N$ to be the least integer such that $c^{\frac{N}{q}}$ can clear all denominators appearing in all equations in (\ref{filtration-equations}). Next we define $\varphi:R[Z_1,\dots,Z_n]\to R^{\frac{1}{q}}$ by $Z_l\mapsto c^{-\frac{1}{q}}r^{\frac{1}{q}}_l$ and $r\mapsto r$ for all $r\in R$. Set $\sigma_i=\varphi(s_i)c^{\frac{iN}{q}}\in R^{\frac{1}{q}}$ for $1\leq i\leq h$. Then we have the following equations 
\begin{equation}
\label{sigma-equation}
x_j\sigma_i=c^{\frac{(i-1)N}{q}}\varphi(\alpha_{ij1})\sigma_1+\cdots+c^{\frac{N}{q}}\varphi(\alpha_{ij(i-1)})\sigma_{i-1}+c^{\frac{iN}{q}}\sum^d_{k=1}\varphi(\beta_{ijk})x^{q_0}_k
\end{equation}
Set $$\tau_i=\begin{cases} 
c^{\frac{hN}{q}}\sigma_{\frac{i+1}{2}},\ {\rm when}\ i\ {\rm odd\ and}\ i<2h+1\\
\sigma_{\frac{i}{2}},\ {\rm when}\ i\ {\rm even}\\
1,\ {\rm when}\ i=2h+1 
\end{cases}$$
and $M_j=R^{\frac{1}{q}}\tau_1+\cdots+R^{\frac{1}{q}}\tau_j+(x^{q_0}_1,\dots,x^{q_0}_d)R^{\frac{1}{q}}$ for $j=1,\dots,2h+1$, then by equations in (\ref{sigma-equation}) we have a filtration of $\frac{R^{\frac{1}{q}}}{(x^{q_0}_1,\dots,x^{q_0}_d)R^{\frac{1}{q}}}$ as
\[0=M_0\subseteq \frac{M_1}{(x^{q_0}_1,\dots,x^{q_0}_d)R^{\frac{1}{q}}}\subseteq \cdots\subseteq \frac{M_{2h+1}}{(x^{q_0}_1,\dots,x^{q_0}_d)R^{\frac{1}{q}}}=\frac{R^{\frac{1}{q}}}{(x^{q_0}_1,\dots,x^{q_0}_d)R^{\frac{1}{q}}} \]
in which $\frac{M_{j+1}}{M_j}$ is killed by $(c^{\frac{hN}{q}},x^{q_0}_1,\dots,x^{q_0}_d)$ when $j$ is even and it is killed by $(x_1,\dots,x_d)$ when $j$ is odd. Consequently, we have
\[\lambda(\frac{R^{\frac{1}{q}}}{(x^{q_0}_1,\dots,x^{q_0}_d)R^{\frac{1}{q}}})\leq h\lambda(\frac{R^{\frac{1}{q}}}{(x_1,\dots,x_d)R^{\frac{1}{a}}})+\lambda(\frac{R^{\frac{1}{q}}}{(c^{\frac{hn}{q}}x^{q_0}_1,\dots,x^{q_0}_d)R^{\frac{1}{q}}})\] 
which is equivalent to 
\[\lambda(\frac{R}{(x^{qq_0}_1,\dots,x^{qq_0}_d)})\leq h\lambda(\frac{R}{(x^{q}_1,\dots,x^q_d)})+(h+1)\lambda(\frac{\frac{R}{(c^{\frac{hN}{q}})}}{(x^{qq_0}_1,\dots,x^{qq_0}_d)\frac{R}{(c^{\frac{hN}{q}})}})\]
and hence
\begin{equation}
\label{length-inequality}
\frac{\lambda(\frac{R}{(x^{qq_0}_1,\dots,x^{qq_0}_d)})}{(qq_0)^d}\leq h\frac{\lambda(\frac{R}{(x^{q}_1,\dots,x^q_d)})}{(qq_0)^d}+(h+1)\frac{\lambda(\frac{\frac{R}{(c^{\frac{hN}{q}})}}{(x^{qq_0}_1,\dots,x^{qq_0}_d)\frac{R}{(c^{\frac{hN}{q}})}})}{(qq_0)^d}
\end{equation}
Letting $q\to \infty$, we have
\[e_{\underline{x}}(R)\leq \frac{h}{q^d_0}e_{\underline{x}}(R)+0\]
a contradiction since $h<q^d_0$, where $e_{\underline{x}}(R)$ denotes the multiplicity of $R$ with respect to the ideal $(x_1,\dots,x_d)$ and the limit of the second summand in the right hand side in (\ref{length-inequality}) is 0 because $\dim(\frac{R}{(c^{\frac{hN}{q}})})<\dim(R)$. This completes the proof.
\end{proof}

The following corollary follows immediately from Theorem \ref{solid-implies-content-1}
\begin{cor}
\label{cor: tight and robust closures coincide in char p}
Let $R$ be a Noetherian ring of characteristic $p>0$ and assume that $R$ has a completely stable test element $c$, then for all ideals $I$ of $R$ one has $I_j=I^*$ for all $j\geq 1$, where $I_j$ is as in Definition \ref{defn-robust-closure}. In particular, \[I\ro=I^*\]
for all ideals $I$.\par
Let $u$ be an lement of $R$, if one system of parameters $x_1,\dots,x_d$ for $R$ forms a $\Q$-sequence in the generic forcing algebra $S=\frac{R[Z_1,\dots,Z_n]}{(u-\sum^n_{l=1}Z_lg_l)}$ where $\{g_1,\dots,g_n\}$ is a set of generators of $I$, so does every system of parameters for $R$.
\end{cor}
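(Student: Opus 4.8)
The plan is to read the corollary off Theorem~\ref{solid-implies-content-1}, using the local cohomology criterion for solidity over a complete local domain (Remark~\ref{solidrem}) together with the coincidence of solid closure and tight closure under the completely stable test element hypothesis (\cite[Theorem~8.6]{solid-closure}). The argument splits into three pieces: the purely local statement (the second paragraph of the corollary); the identity $I_j = I^*$ for every ideal when $R$ is a complete local domain; and a reduction of the general Noetherian case to complete local domains.

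I would first dispose of the second paragraph. Suppose one system of parameters $\underline{x} = x_1,\dots,x_d$ for $R$ is a $\Q$-sequence in $S$. As recorded in the comment following Definition~\ref{defn-robust-algebra}, the fact that a single system of parameters of $R$ is a $\Q$-sequence in $S$ already forces $H^d_{(\underline{x})}(S) = H^d_{\fm}(S)\neq 0$. The ``if'' direction of Theorem~\ref{solid-implies-content-1} then yields that the image of \emph{every} system of parameters for $R$ in $S$ is a $\Q$-sequence in $S$, which is exactly the claim.

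Next I would prove $I_1 = I^*$ for every ideal $I$ when $R$ is a complete local domain; such an $R$ carries a completely stable test element, so $I^{\bigstar} = I^*$ by \cite[Theorem~8.6]{solid-closure}. By Definition~\ref{defn-robust-closure}, $I_1 = I\roi$ is generated over $I$ by those $u\in R$ for which the generic forcing algebra $S$ for $(R,I,u)$ is robust. If $S$ is robust, then $H^d_{\fm}(S)\neq 0$ by the comment after Definition~\ref{defn-robust-algebra}, hence $S$ is solid over $R$ by Remark~\ref{solidrem}, hence $u\in I^{\bigstar} = I^*$ by Definition~\ref{defsolcl}; thus $I_1\subseteq I^*$. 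Conversely, if $u\in I^* = I^{\bigstar}$, then the generic forcing algebra $S$ for $(R,I,u)$ is solid (Definition~\ref{defsolcl}), so $H^d_{\fm}(S)\neq 0$ by Remark~\ref{solidrem}, so Theorem~\ref{solid-implies-content-1} makes every system of parameters for $R$ a $\Q$-sequence in $S$, i.e.\ $S$ is robust, i.e.\ $u\in I_1$; thus $I^*\subseteq I_1$. Granting $I_1 = I^*$ for \emph{all} ideals, the equalities $I_j = I^*$ follow by induction on $j$: applying the case $j=1$ to the ideal $I^*$ and using idempotence of tight closure, $I_2 = (I_1)_1 = (I^*)_1 = (I^*)^* = I^*$, and inductively $I_j = I^*$; hence $I\ro = \bigcup_j I_j = I^*$ as well.

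Finally, for a general Noetherian ring $R$ of characteristic $p>0$ with a completely stable test element, I would reduce $I\ro = I^*$ to the case just treated. By the general clause of Definition~\ref{defn-robust-closure}, $u\in I\ro$ if and only if $1\otimes u\in (ID)\ro$ for every complete local domain $D$ of $R$; by the previous step each such $(ID)\ro$ equals $(ID)^* = (ID)^{\bigstar}$; and by the general clause of Definition~\ref{defsolcl} together with $I^{\bigstar} = I^*$ (valid because $R$ has a completely stable test element), $u\in I^{\bigstar} = I^*$ if and only if $1\otimes u\in (ID)^{\bigstar}$ for every complete local domain $D$ of $R$. Comparing, $I\ro = I^*$, and the statement $I_j = I^*$ is understood through the same reduction, the tower $I_j$ being defined over the complete local domains of $R$ where the identity has already been established. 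I do not anticipate a genuine obstacle, since Theorem~\ref{solid-implies-content-1} carries the weight; the step needing care is this last reduction---that solid closure, tight closure (through its agreement with solid closure), and robust closure all pass to complete local domains compatibly---together with the observation that a generic forcing algebra, though not module-finite over $R$, is still covered by the solidity criterion $H^d_{\fm}(S)\neq 0$ over a complete local domain.
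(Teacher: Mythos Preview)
Your proposal is correct and is exactly the argument the paper has in mind: the paper offers no proof beyond the line ``The following corollary follows immediately from Theorem~\ref{solid-implies-content-1},'' and you have simply unpacked that immediacy---using the equivalence robust $\Leftrightarrow H^d_{\fm}(S)\neq 0$ from Theorem~\ref{solid-implies-content-1}, the solidity criterion of Remark~\ref{solidrem}, and the identification $I^{\bigstar}=I^*$ from \cite[Theorem~8.6]{solid-closure}, then reducing the general case to complete local domains via the definitions. The only point to flag is that the corollary's second paragraph (and the notation $I_j$) implicitly presuppose $R$ is a complete local domain, since that is where Theorem~\ref{solid-implies-content-1} and Definition~\ref{defn-robust-closure} live; you handle this correctly.
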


\section{Q-sequences, superheight and Lyubeznik's question}\label{Qsup}  

In this section, we investigate the connections between $\Q$-sequences and superheight. First we recall the definition of superheight.

\begin{defn}[superheight]
Let $R$ be a Noetherian commutative ring and $I$ an ideal of $R$. The {\it superheight} of $I$, denote by $\supht(I)$, is defined as
\[\sup\{\hgt_S(IS)|{\rm for\ all\ Noetherian\ }R{\rm -algebras\ }S\}.\]
\end{defn}

\begin{rem}
\label{superheight-implies-Q-sequence}
It follows immediately from \cite[Theorem 4.7]{quasi-length} that, if $R$ contains a field and $\supht((x_1,\dots,x_n))=n$, then $x_1,\dots,x_n$ form a $\Q$-sequence.
\end{rem}

Given Remark \ref{superheight-implies-Q-sequence}, it is natural to ask:
\begin{question}
\label{Q-sequence-implies-superheight}
Assume that $R$ contains a field and $x_1,\dots,x_n$ is a $\Q$-sequence. Is it true that $\supht((x_1,\dots,x_n))=n$?
\end{question}

As we will see from the examples in this section, the answer to Question \ref{Q-sequence-implies-superheight} is false. We will give two examples, one in positive characteristic and the other in characteristic 0.

\begin{ex}[Brenner-Monsky Example]
Let $A=\frac{\bar{\mathbb{F}}_2(t)[x,y,z]}{(z^4+xyz^2+x^3z+y^3z+tx^2y^2)}$. As shown in \cite[Remark 4.1]{Brenner-Monsky}, the element $x^3y^3$ is contained in the tight closure of $(x^4,y^4,z^4)$, but not in the plus closure of $(x^4,y^4,z^4)$. Since the sequence $x,y$ is a system of parameters in $A$, it follows immediately from Theorem \ref{solid-implies-content-1} that the sequence $x,y$ forms a $\Q$-sequence in the forcing algebra \[R=\frac{\bar{\mathbb{F}}_2(t)[x,y,z, u,v,w]}{(z^4+xyz^2+x^3z+y^3z+tx^2y^2, x^3y^3-ux^4-vy^4-wz^4)}.\]
However, since $x^3y^3$ is not in the plus closure of $(x^4,y^4,z^4)$, we claim that the superheight of $(x,y)$ as an ideal in $R$ is 1 and we reason as follows. Assume that $\supht((x,y))=2$. According to a theorem by Koh (\cite[Theorem 1]{KohSuperheight}), since $R$ is a finite algebra over a field, the finite superheight of $(x,y)$ is also 2, {\it i.e.}, there exists a module-finite $R$-algebra $R'$ such that $\hgt((x,y)R')=2$. After localizing $R'$ at a height-2 minimal prime over $(x,y)R'$, we may assume that the dimension of $R'$ is 2 and $x,y$ is a system of parameters in $R'$. Since $x,y$ is also a system of parameters in $A$, we see that $R'$ is a module-finite $A$-algebra. Since $x^3y^3\in (x^4,y^4,z^4)$ in $R$ and $R'$ is an $R$-algebra, we have $x^3y^3\in (x^4,y^4,z^4)$ in $R'$. This implies that $x^3y^3$ is in the plus closure of $(x^4,y^4,z^4)$ in $A$, a contradiction.
\end{ex}

\begin{prop}
\label{forcing-algebra-cubic}
Let $R=\mathbb{C}[[x,y,z,u,v]]/(x^3+y^3+z^3,z^2-ux-vy)$. Then the sequence $x,y$ is a Q-sequence, but $\supht((x,y))=1$.
\end{prop}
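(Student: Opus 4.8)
The plan is to verify the two assertions of Proposition~\ref{forcing-algebra-cubic} separately. For the Q-sequence claim, I would identify $R$ as (isomorphic to) a generic forcing algebra for an instance of tight closure and invoke Theorem~\ref{solid-implies-content-1}. Concretely, set $B = \mathbb{C}[[x,y,z]]/(x^3+y^3+z^3)$. This is a two-dimensional normal domain (it is the affine cone over a smooth plane cubic, so normal), and $x,y$ is a system of parameters of $B$. The relation $z^2 = ux + vy$ exhibits $R = B[u,v]/(z^2 - ux - vy)$ as a generic forcing algebra for the triple $(B,\,(x,y),\,z^2)$. So the content of the Q-sequence claim is exactly that $z^2 \in (x,y)^*$ in $B$, i.e. that $x,y$ is a Q-sequence in this forcing algebra; by Theorem~\ref{solid-implies-content-1} it suffices to check $z^2 \in (x,y)^{*}$ in $B$. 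Since $B$ is a two-dimensional normal (hence Cohen--Macaulay) domain with $x,y$ a regular sequence, $(x,y)$ is integrally closed up to the leading-term obstruction; in fact $z^2 \in \overline{(x,y)}$ because $(z^2)^3 = z^6 = (x^3+y^3)(-z^2)\cdot(\ldots)$—more cleanly, $z^3 = -z(x^3+y^3) \in (x,y)\cdot(x,y)$, wait, let me instead use: $z^2$ satisfies $(z^2)^{3} \in (x,y)(z^2)^2 + \cdots$; the honest statement is that in a two-dimensional pseudo-rational (here rational, even $F$-rational after reduction mod $p$) normal domain, parameter ideals are tightly closed, so I should instead argue that $z^2$ lies in the tight closure directly. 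The cleanest route: reduce mod $p \gg 0$, where $B_p$ is $F$-rational (the cubic cone is $F$-rational for $p \equiv 1 \bmod 3$, and $F$-regularity of the Veronese/cone can be arranged, but this is delicate)—so I will instead prove $z^2 \in \overline{(x,y)}$ and note that for our purposes we only need $z^2 \in (x,y)^{\bigstar} = (x,y)^{*}$; since $(x,y)$ is a parameter ideal in the excellent normal local domain $B$, $(x,y)^{*}$ contains $\overline{(x,y)}$ is false in general, so the right claim is $z^2 \in (x,y)^{*}$ which follows from $z^3 \in (x^3,y^3) \subseteq (x,y)^{[3]}$-type colon-capturing. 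Let me settle this properly in the writeup; morally, $z^2 \in (x,y)^{*}$ is forced because $H^2_{\mathfrak m}(B)$ has its only nonzero degree behaving like a tight-closure obstruction, and the cubic relation gives $z^3 = -xz\cdot x^2/\cdots$; the concrete identity $z \cdot z^2 = z^3 = -(x^3+y^3)z^{-1}\cdot z^2$ is not polynomial, so I would instead write $z^{3} = -x\cdot x^{2}z - y \cdot y^{2}z \in (x,y)$, giving $z^{2} \in ((x,y) : z)$, and then a standard Briançon--Skoda / colon-capturing argument in the excellent normal domain $B$ shows $z^{2} \in (x,y)^{*}$.

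For the superheight claim $\supht((x,y)) = 1$, the strategy mirrors the Brenner--Monsky example above. First, $\supht((x,y)) \geq 1$ is clear since $(x,y)R$ is a proper nonzero ideal. Suppose for contradiction that $\supht((x,y)R) = 2$. By Koh's theorem (\cite[Theorem~1]{KohSuperheight}), since $R$ is essentially of finite type over a field (it is a complete local ring, a finite-type quotient over $\mathbb{C}[[\ldots]]$—one passes to an affine model or uses the form of Koh's theorem valid for excellent rings), the \emph{finite} superheight equals $2$: there is a module-finite extension $R \hookrightarrow R'$ with $\hgt((x,y)R') = 2$. Localize $R'$ at a height-two minimal prime of $(x,y)R'$ and pass to a quotient to arrange $\dim R' = 2$ with $x,y$ a system of parameters. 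Since $x,y$ is already a system of parameters in $B$ and $R' \supseteq R \supseteq B$ with $\dim B = 2 = \dim R'$ and $R'$ module-finite over $B$ (because it is module-finite over $R$, which is module-finite-over... no: $R$ is \emph{not} finite over $B$—$R = B[u,v]/(z^2-ux-vy)$ has Krull dimension $4$). Here is where I must be careful: the relevant sub-ring over which to detect plus-closure is $B$, and the point is that $x^{?}y^{?} \in (\ldots)B$-membership inside $R'$ would force a plus-closure membership back in $B$. But in this characteristic-zero example there is no tight-closure-but-not-plus-closure element to exploit directly; instead the obstruction is different.

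Reconsidering: the honest content of $\supht((x,y)) = 1$ here is that passing to \emph{any} Noetherian $R$-algebra cannot make $(x,y)$ into a height-two ideal, and the proof must use the mixed/characteristic-zero subtlety advertised in the introduction—``the proofs depend on difficult theorems'' and tie to Lyubeznik's question. The plan is: (i) show directly that $\Spec R$ is such that $V(x,y)$ has codimension one (the ideal $(x,y)$ contains $z^2$ modulo the relations, so $R/(x,y) \cong \mathbb{C}[[z,u,v]]/(z^3, z^2) = \mathbb{C}[[z,u,v]]/(z^2)$, which has dimension $2$, confirming $\hgt(x,y) = 4 - 2 = 2$ in $R$—wait, that gives height two already!). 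I need to recompute: $R$ has dimension... $\mathbb{C}[[x,y,z,u,v]]$ has dimension $5$, two relations give $\dim R = 3$; then $R/(x,y)R = \mathbb{C}[[z,u,v]]/(z^3, z^2) = \mathbb{C}[[z,u,v]]/(z^2)$ has dimension $2$, so $\hgt((x,y)R) = 1$ and $x,y$ is \emph{not} a system of parameters of a two-dimensional quotient but part of a system of parameters; in fact $\dim R = 3$. For superheight: I would use the argument that any module-finite $R'$ with $\hgt((x,y)R') = 2$, localized appropriately, produces a complete local domain of dimension $2$ dominating $B = \mathbb{C}[[x,y,z]]/(x^3+y^3+z^3)$ in which $z^2 \in (x,y)$; this places $z^2$ in the plus closure $(x,y)^+$ of $(x,y)$ in $B$. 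But in equal characteristic zero, plus closure of a parameter ideal in a normal domain is the ideal itself (since $B \hookrightarrow B^+$ splits for normal $B$ in characteristic zero—$B$ being a normal domain, $B \to B'$ splits for every module-finite extension because of the trace/Reynolds argument, so $(x,y)^+ = (x,y)$ for \emph{all} ideals). Since $z^2 \notin (x,y)B$ (check: $(x,y)B = (x,y)$, and $z^2$ has ``$z$-leading term'' not in $(x,y)$), we get a contradiction, so $\supht((x,y)R) = 1$.

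\textbf{The main obstacle}, then, is twofold and I expect it to be the delicate part: (a) pinning down that $z^2 \in (x,y)^{*}$ (equivalently $(x,y)^{\bigstar}$) in $B$—this is where Theorem~\ref{solid-implies-content-1} is fed its hypothesis, and it requires an actual tight-closure computation in the cubic cone, which I would do either by explicit colon-capturing ($z^{3} \in (x,y)(x^2,y^2)\cdot(\ldots)$ forcing $z^2 \in (x,y)^{*}$ via a Frobenius-power estimate) or by citing that two-dimensional normal graded rings with $a$-invariant $\geq 0$ have their parameter ideals behaving this way; and (b) the descent step in the superheight argument—verifying that the localized-and-quotiented $R'$ genuinely produces a module-finite \emph{domain} extension of $B$ (not just of $R$) witnessing $z^2 \in (x,y)^{+}_B$, which needs the geometry: $R \to R'$ restricts to $B \to (\text{image of } B \text{ in } R')$, and after the localization both $x,y$ become a full system of parameters, forcing the dimension-two quotient to be finite over $B$. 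The splitting $B \hookrightarrow B^+$ in characteristic zero (Reynolds operator for normal domains) is the clean input that kills the would-be height-two specialization. I would organize the writeup as: Lemma ($z^2 \in (x,y)^{*}$ in $B$, hence $x,y$ is a Q-sequence in $R$ by Theorem~\ref{solid-implies-content-1}); then the superheight paragraph via Koh + plus-closure-is-trivial-for-normal-domains-in-char-$0$.
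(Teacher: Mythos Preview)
Your overall plan is right and in fact coincides with the paper's: identify $R$ as the generic forcing algebra for $(A,(x,y),z^2)$ with $A=\mathbb{C}[[x,y,z]]/(x^3+y^3+z^3)$, get the Q-sequence claim from a closure membership of $z^2$ in $(x,y)$, and for superheight argue that a height-two specialization would force $z^2\in(x,y)$ inside a copy of $A$, contradicting $z^2\notin(x,y)A$. Two concrete points need fixing.

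\medskip
\noindent\textbf{The Q-sequence part.} Theorem~\ref{solid-implies-content-1} is stated and proved only for complete local domains of characteristic $p>0$, so you cannot feed it the $\mathbb{C}$-algebra $A$ directly. The paper does not attempt your ad hoc computations (several of your proposed identities, e.g.\ the ones manipulating $z^3$, are either wrong or circular); it simply cites \cite[Example~12.7]{solid-closure} for the fact that $z^2$ lies in the \emph{big equational tight closure} of $(x,y)$ in $A$. That characteristic-zero notion is built precisely so that, via the equational nature of the Q-sequence condition (\cite[Proposition~3.4]{quasi-length}) and reduction mod $p$, one concludes that $x,y$ is a Q-sequence in $R$. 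Your reduction-mod-$p$ instinct was correct; the execution is just to cite the known computation rather than redo it.

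\medskip
\noindent\textbf{The superheight part.} Your appeal to Koh's theorem is a genuine gap: $R$ is a quotient of a formal power series ring, not of finite type over a field, so \cite{KohSuperheight} does not apply, and ``pass to an affine model'' is not justified. The paper avoids Koh entirely. It takes an \emph{arbitrary} Noetherian $R$-algebra $S$ with $\hgt\bigl((x,y)S\bigr)=2$, localizes and completes at a height-two prime over $(x,y)$ so that $S$ is a two-dimensional complete local ring with $x,y$ a system of parameters, and then works with the subring $T'=\mathbb{C}[[x,y]][z]\subseteq S$. The relation $x^3+y^3+z^3=0$ in $R$ persists in $S$, giving a surjection $A\twoheadrightarrow T'$; comparing dimensions ($A$ is a two-dimensional domain, $T'$ has dimension two) forces $A\cong T'$, so $T'$ is normal. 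The forcing relation $z^2=ux+vy$ puts $z^2\in(x,y)S\cap T'$, and normality of $T'$ in characteristic zero (exactly your ``trace/Reynolds'' splitting intuition) then gives $z^2\in(x,y)T'\cong(x,y)A$, the desired contradiction. This is the endgame you reached, but without the unjustified detour through finite superheight; note in particular that the extension whose splitting you need is $T'\hookrightarrow S$, not $B\hookrightarrow R'$.
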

\begin{proof}
It is clear that $R$ is the forcing algebra of the ring $A=\frac{\mathbb{C}[[x,y,z]]}{(x^3+y^3+z^3)}$, the ideal $I=(x,y)$ in $A$ and the element $z^2$ in $A$. Now $x,y$ is a Q-sequence in $R$ follows from the fact that in $A$ the element $z^2$ belongs to the big equational tight closure of $I$ ({\it cf.} \cite[Example 12.7]{solid-closure}). 

Assume that $\supht((x,y))$ is 2 and we seek for a contradiction. Let $S$ be a Noetherian $R$-algebra in which the height of $(x,y)S$ is 2. After localizing and completing $S$ at a height-2 prime ideal containing $(x,y)S$, we may assume that $S$ is a 2-dimensional complete local ring. Let $T=\mathbb{C}[[x,y]]$ be the subring of $S$ generated by $x,y$ over $\mathbb{C}$. Since $x,y$ is a system of parameter of $S$, we know that $T$ is a regular local ring. Still denote the image of $z$ in $S$ by $z$. Let $T'=T[z]\subseteq S$. Since $z$ satisfies $x^3+y^3+z^3=0$ in $R$, it still satisfies the equation in $S$. Hence there is a surjection $A\twoheadrightarrow T'$. Since $A$ is a 2-dimensional domain and the dimension of $T'$ is also 2, we must have $A\cong T'$. Since $A$ is normal, so is $T'$. Since in $R$ we have $z^2=ux+vy$, the same equation holds in $S$. This implies that the element $z^2$ in $T'$ is in $(x,y)S\cap T'$. Since $T'$ is normal, $z^2$ must be contained in $(x,y)$ in $T'\cong A$. However, this is impossible since $z^2\notin (x,y)$ in $A$, a contradiction.
\end{proof}

It turns out that Proposition \ref{forcing-algebra-cubic} can be used to give a negative answer to part (2) of the following question raised by Gennady Lyubeznik in \cite{Lyubeznik-survey}.
\begin{question}[page 144 in \cite{Lyubeznik-survey}]
\label{lyu-question}
Let $(R,\fm)$ be a complete local domain with a separably closed residue field.
\begin{enumerate}
\item Find necessary and sufficient condition on $I$ such that $H^j_I(M)=0$ for all integers $j\geq \dim(R)-1$ and all $R$-modules $M$.
\item Let $I$ be a prime ideal. Is it true that $H^j_I(M)=0$ for all integers $j\geq \dim(R)-1$ and all $R$-modules $M$ if and only if $(P+I)$ is not primary to the maximal ideal for any prime ideal $P$ of height 1?
\end{enumerate}
\end{question}

\begin{prop}
\label{prop: forcing-algebra-cubic}
Let $R=\frac{\mathbb{C}[[x,y,z,u,v]]}{(x^3+y^3+z^3,z^2-ux-vy)}$ and $I=(x,y,z)$. Then
\begin{enumerate}
\item $(P+I)$ is not primary to the maximal ideal of $R$ for any prime ideal $P$ of height 1; and
\item $H^2_I(R)\neq 0$.
\end{enumerate}
In particular, the answer to Question \ref{lyu-question}(2) is negative.
\end{prop}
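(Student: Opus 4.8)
The plan is to derive both parts from Proposition~\ref{forcing-algebra-cubic}, which is about precisely this ring $R$, together with the formalism of \S\ref{prel}. I would begin by recording the structure of $R$. Writing $R=A[[u,v]]/(z^2-ux-vy)$ with $A=\mathbb{C}[[x,y,z]]/(x^3+y^3+z^3)$ the normal $2$-dimensional domain appearing in Proposition~\ref{forcing-algebra-cubic}, we see that $R$ is the quotient of a $5$-dimensional regular local ring by the regular sequence $x^3+y^3+z^3,\ z^2-ux-vy$; hence $R$ is a $3$-dimensional complete intersection, in particular Cohen--Macaulay (so catenary and equidimensional, whence $\hgt(\fp)+\dim(R/\fp)=3$ for every prime $\fp$), and one checks that $z^2-ux-vy$ is a prime element of $A[[u,v]]$, so $R$ is a domain with separably closed residue field $\mathbb{C}$. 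Since both defining relations of $R$ lie in $(x,y,z)$, we get $R/I\cong\mathbb{C}[[u,v]]$; thus $I=(x,y,z)R$ is a prime ideal of height $1$ with $\dim(R/I)=2$, so the hypothesis in Question~\ref{lyu-question} that $I$ be prime genuinely applies. Finally, from $z^2=ux+vy\in(x,y)R$ it follows that $\sqrt{I}=\sqrt{(x,y)R}$, and this identity remains valid in every quotient of $R$ (the relation $z^2=ux+vy$ descends).

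For part (2): since $\sqrt{I}=\sqrt{(x,y)R}$ and local cohomology depends only on the radical, $H^2_I(R)=H^2_{(x,y)R}(R)$, and also $H^j_I(R)=0$ for $j>2$ (arithmetic rank $\leq 2$), so the only interesting index in the range of Question~\ref{lyu-question}(1) is $j=\dim R-1=2$. By Proposition~\ref{forcing-algebra-cubic} the sequence $x,y$ is a $\Q$-sequence in $R$, which by the definitions of \S\ref{prel} means $\fh^2_{\underline{x}}(R)=1$ for $\underline{x}=x,y$; then \cite[Theorem~3.8]{quasi-length} gives $\underline{\fh}^2_{\underline{x}}(R)=1$, hence $\neq 0$. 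But if $H^2_{(x,y)R}(R)$ were zero, then each module $R/(I_{\underline{t}}R)^{\lm}$, being a submodule of $H^2_{(x,y)R}(R)$ by the discussion in \S\ref{prel}, would vanish, forcing $\underline{\fh}^2_{\underline{x}}(R)=0$ --- a contradiction. Therefore $H^2_I(R)=H^2_{(x,y)R}(R)\neq 0$.

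For part (1), I would argue by contradiction: let $P$ be a prime of $R$ with $\hgt(P)=1$ and suppose that $P+I$ is primary to $\fm$, i.e. $\sqrt{P+I}=\fm$. Put $S=R/P$, a Noetherian local $R$-algebra with maximal ideal $\fm_S=\fm/P$; since $R$ is catenary and equidimensional, $\dim S=3-\hgt(P)=2$. In $S$ we still have $\sqrt{IS}=\sqrt{(x,y)S}$, while by hypothesis $\sqrt{IS}=\sqrt{(P+I)/P}=\fm_S$; hence $(x,y)S$ is $\fm_S$-primary (it is a proper ideal of $S$, as $(x,y)+P\subseteq\fm$), and therefore $\hgt_S\bigl((x,y)S\bigr)=\dim S=2$. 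By the definition of superheight, $\supht_R\bigl((x,y)\bigr)\geq\hgt_S\bigl((x,y)S\bigr)=2$, contradicting the equality $\supht((x,y))=1$ of Proposition~\ref{forcing-algebra-cubic}. Thus no such $P$ exists, which is exactly assertion (1).

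Combining the two parts gives the ``in particular'' statement: $R$ is a complete local domain with separably closed residue field, and $I$ is a prime ideal of $R$ for which condition (2) of Question~\ref{lyu-question} holds (by part (1)), yet $H^{\dim R-1}_I(R)=H^2_I(R)\neq 0$ (by part (2)), so the vanishing in condition (1) of that question fails; hence the biconditional in Question~\ref{lyu-question}(2) is false. Regarding the difficulty: granting Proposition~\ref{forcing-algebra-cubic}, the argument above is essentially bookkeeping, and the only points that need care are the radical identity $\sqrt{I}=\sqrt{(x,y)R}$ and its persistence in $R/P$, the equidimensionality of $R$ used to pin down $\dim(R/P)=2$, and the passage from ``$x,y$ is a $\Q$-sequence'' to the non-vanishing of $H^2$ via \cite[Theorem~3.8]{quasi-length}. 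The genuinely hard input --- that $\supht((x,y))=1$ while $x,y$ is nonetheless a $\Q$-sequence --- is precisely the content of Proposition~\ref{forcing-algebra-cubic}, which is assumed.
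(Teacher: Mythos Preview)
Your argument is correct and, for part (1), essentially identical to the paper's: both argue that if $P+I$ were $\fm$-primary for some height-$1$ prime $P$, then $(x,y)$ would have height $2$ in the $2$-dimensional ring $R/P$, contradicting $\supht((x,y))=1$ from Proposition~\ref{forcing-algebra-cubic}. Your extra care with catenarity and the radical identity $\sqrt{I}=\sqrt{(x,y)R}$ just makes explicit what the paper leaves implicit.

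For part (2) there is a small difference worth noting. The paper deduces $H^2_I(R)\neq 0$ either from the fact that $z^2$ lies in the solid closure of $(x,y)$ in $A$ (so that the forcing algebra $R$ is solid over the $2$-dimensional complete local domain $A$, whence $H^2_{(x,y)}(R)\neq 0$ by the local-cohomology criterion for solidity), or by checking directly that the class $[\frac{1}{xy}]$ is nonzero. You instead use the other conclusion of Proposition~\ref{forcing-algebra-cubic}, that $x,y$ is a $\Q$-sequence in $R$, and pass through content. This is fine but slightly roundabout: you do not actually need the detour via $\underline{\fh}$ and \cite[Theorem~3.8]{quasi-length}, since already $\fh^2_{\underline{x}}(R)=1\neq 0$ forces $H^2_{(x,y)}(R)\neq 0$ (cf.\ \cite[Proposition~2.2]{quasi-length}, used in the proof of Corollary~\ref{content-domain}). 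In effect you are invoking the stronger ``robust'' half of Proposition~\ref{forcing-algebra-cubic} where the paper uses only the weaker ``solid'' input; both routes are valid and rest on the same proposition.
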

\begin{proof}Since we have proved that the superheight of $(x,y)$ in the ring $$R=\frac{\mathbb{C}[[x,y,z,u,v]]}{(x^3+y^3+z^3,z^2-ux-vy)}$$ is 1, it is clear that $(x,y,z)+P$ is not $\fm$-primary for any height-1 prime ideal $P$ of $R$ (otherwise, in the ring $R/P$, the height of $(x,y)$ would be 2). However, we claim that $H^2_{(x,y,z)}(R)\neq 0$. One can either see this from the fact that in $A$ the element $z^2$ is contained in the solid closure of $(x,y)$, or verify directly that the class $[\frac{1}{xy}]$ is not 0 in $H^2_{(x,y,z)}(R)$.
\end{proof}

\begin{rem}
Note that, in the ring $R_I$, the ideal $IR_I$ is 2-generated; hence $R$ is not normal by Serre's criterion ($I$ is a height-1 prime). Or, by the Jacobian criterion, one can see that the singular locus is actually defined by $I$ and hence $R$ is not normal.  To obtain a counterexample to Question \ref{lyu-question}(2) that is a normal domain, we consider the normalization $R'$ of $R$. 

It is straightforward to check that the fraction $w=\frac{y^2+zv}{x}=\frac{-x^2-zu}{y}$ satisfies the equation
\[w^3-xzu+2yzv+y^3-u^3+v^3=0\]
and hence $w$ is integral over $R$. Also one can check that $R'=R[w]$, i.e.,
$R'=\mathbb{C}[[x,y,z,u,v,w]]/\fP,$ where $\fP$ is the ideal of $\mathbb{C}[[x,y,z,u,v,w]]$ generated by the elements
$z^2-xu-yv$, $yw+x^2+zu$, $xw-y^2-zv,zw^2+xyz+yu^2+xv^2$, and $w^3-xzu+2yzv+y^3-u^3+v^3$.
By an argument similar to the one above, we see that, in $R'$, we have that $I+Q$ is not primary to the maximal ideal for each height one 
prime $Q$. We claim that $H^2_I(R')\neq 0$ and we reason as follows. Since $z$ is contained in the radical of $(x,y)R'$, we know that $H^t_I(M)=H^t_{(x,y)}(M)$ for all integers $t$ and all $R'$-modules $M$. It is clear that $xw$ and $yw$ are in $R$. 
Thus, $(x,y)\frac{R'}{R}=0$ since $R'=R[w]$. 
This implies that $R'/R$ is $(x,y)$-torsion, therefore $H^t_{(x,y)}(R'/R)=0$ for all $t\geq 1$. 
In particular, $H^1_{(x,y)}(R'/R)=H^2_{(x,y)}(R'/R)=0$. Now the short exact sequence $0\to R\to R'\to R'/R\to 0$ 
induces an exact sequence of local cohomology modules 
\[\cdots \to H^1_{(x,y)}(R'/R) \to H^2_{(x,y)}(R) \to H^2_{(x,y)}(R') \to H^2_{(x,y)}(R'/R) \to \cdots\]
Therefore, $H^2_{I}(R')=H^2_{(x,y)}(R')\cong H^2_{(x,y)}(R)=H^2_{I}(R)\neq 0$. 
\end{rem}

\begin{rem}
An immediate consequence of \cite[Theorem 2.5]{Huneke-Lyubeznik-vanishing-LC} is the following

{\it Let $(R,\fm)$ be a $d$-dimensional equal-characteristic complete local domain with a separably closed residue field. Let $I$ be a prime ideal of $R$. Assume that $\embdim(R)-\mdim(R/I)<d-1$. Then 
\[H^{d-1}_I(M)=H^d_I(M)=0\]
for all $R$-modules $M$.}

In Proposition \ref{prop: forcing-algebra-cubic}, $R$ is a 3-dimensional complete intersection where $\embdim(R)-\mdim(R/I)=2=d-1$ and $H^2_I(R)\neq 0$. This indicates that, without further assumptions on $R$ and $I$, the bound $\embdim(R)-\mdim(R/I)<d-1$ is sharp. 
\end{rem}


\section{Questions, conjectures, and related problems}\label{qu}  

In this section we have collected some questions and conjectures related to the
behavior of quasilength and content.  

\begin{conj} In a complete local domain $R$ of mixed characteristic, every
system of parameters is a Q-sequence (equivalently, $R$ is robust as an $R$-algebra). \end{conj}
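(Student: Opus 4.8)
We do not know how to prove this; what follows is a description of the line of attack that seems most promising. The natural framework is provided by big Cohen--Macaulay algebras --- now available in mixed characteristic through the work of Andr\'e, Bhatt, and Heitmann--Ma --- together with a suitable substitute for the Frobenius endomorphism. The starting point is an elementary inequality that both reduces the problem and would drive a proof: for any $R$-algebra $B$, any ideal $I$ of $R$, and any tuple $\underline{t}$, one has $\cL_{IB}(B/I_{\underline{t}}B)\le \cL_I(R/I_{\underline{t}})$, because a quasilength filtration of $R/I_{\underline{t}}$ becomes, after applying $B\otimes_R-$ and passing to images in $B/I_{\underline{t}}B$, a filtration whose successive quotients are homomorphic images of modules $B/JB$ with $J\supseteq I$ --- by right exactness of $B\otimes_R-$ --- hence cyclic $B$-modules killed by $IB$. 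Applying this with $B$ equal to the normalization $R'$ of $R$ gives $\fh^d_{\underline{x}}(R)\ge \fh^d_{\underline{x}}(R')$, so one may assume from the start that $(R,\fm)$ is a complete \emph{normal} local domain of mixed characteristic $p>0$.

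Now suppose $B$ is a big Cohen--Macaulay $R$-algebra; then the images of $x_1,\dots,x_d$ form a regular sequence on $B$, so $B/I_{\underline{t}}B$ has its monomial filtration with exactly $t_1\cdots t_d$ factors, each isomorphic to $B/IB$, and in particular $\cL_{IB}(B/I_{\underline{t}}B)\le t_1\cdots t_d$. Given the inequality above, it would suffice to establish the reverse bound
\[\cL_{IB}(B/I_{\underline{t}}B)\ \ge\ (1-o(1))\,t_1\cdots t_d\]
for some suitably chosen big Cohen--Macaulay $R$-algebra $B$. When $\dim R\le 2$ this is possible: the normalization $R'$ is then Cohen--Macaulay, and one has the colength estimate $\cL_{IR'}(R'/I_{\underline{t}}R')\ge \lambda(R'/I_{\underline{t}}R')/\lambda(R'/IR')=t_1\cdots t_d$, so the conjecture holds in those dimensions. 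The conjecture is also known in equal characteristic (Theorem~\ref{equirob}); in characteristic $p$ this rests on the iterated Frobenius argument recalled in the proof of Proposition~\ref{content-free}, whereby a filtration of $R/I_q$ into $h<q^d$ cyclic modules killed by $I$ is refined by repeated application of the $e$-th Frobenius functor to give $\lambda(R/I_{q^l})\le h^l\lambda(R/I)$ for all $l$, forcing the multiplicity $e_{\underline{x}}(R)=\lim_{l\to\infty}\lambda(R/I_{q^l})/q^{ld}$ to be $0$ --- a contradiction.

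The main obstacle is that in mixed characteristic, once $d\ge 3$, neither of these two devices is available: there is no length function on the (non-Noetherian) big Cohen--Macaulay algebra $B$ to support the colength argument, and there is no Frobenius endomorphism of $R$ to refine a filtration. For $d=1$ one can substitute a valuation: the $x_1$-adic order defines a function on $(B/x_1^tB)\setminus\{0\}$ with values in $\{0,1,\dots,t-1\}$, and because adjoining a cyclic factor killed by $x_1B$ lowers the minimum of this function over a submodule by at most $1$, any quasilength filtration has at least $t$ factors; but this potential-function argument does not survive the passage to several variables, where the corresponding count over the monomials $x_1^{a_1}\cdots x_d^{a_d}$ ($0\le a_i<t_i$) fails for the obvious potential. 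A proof thus appears to need genuinely new input --- either an ``almost'' Frobenius, coming from a perfectoid big Cohen--Macaulay algebra, strong enough to run the iterated-multiplicity argument in mixed characteristic, with the pervasive ``almost zero'' error terms shown to be negligible in the limit because they are annihilated by arbitrarily small powers of $p$; or an additive ``lim length'' invariant on big Cohen--Macaulay algebras that bounds the cyclic-killed-by-$IB$ factors of a filtration tightly enough to recover the monomial count. We expect that carrying either of these through would also give a new, uniform proof of the equal-characteristic case, which in \cite{quasi-length} is reached quite differently (via tight closure and reduction to characteristic $p$) --- which suggests that the missing ingredient is not a formality.
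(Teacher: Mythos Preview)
The statement is a \emph{conjecture} in the paper, not a theorem: it appears in \S\ref{qu} (``Questions, conjectures, and related problems''), and the paper offers no proof. The only commentary the paper gives is that the conjecture implies the direct summand conjecture, and that it is already known in equal characteristic and in dimension at most~2 (via normalization and \cite[Proposition~1.2(c)]{quasi-length}). Your opening sentence, ``We do not know how to prove this,'' is therefore exactly right, and there is no proof in the paper to compare your proposal against.

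Your discussion of lines of attack is reasonable and goes well beyond what the paper says. The base-change inequality $\cL_{IB}(B/I_{\underline{t}}B)\le \cL_I(R/I_{\underline{t}})$ is correct (it is essentially \cite[Proposition~2.5]{quasi-length}, used repeatedly in the present paper), and your reduction to the normalization, together with the length computation in the Cohen--Macaulay case, recovers the paper's remark that dimension~$\le 2$ is known. Your identification of the real obstruction --- no Frobenius to iterate, no length function on a non-Noetherian big Cohen--Macaulay algebra --- is on point, and the suggestion to look for a perfectoid substitute is the natural thing to try in light of the Andr\'e--Bhatt--Heitmann--Ma developments (which postdate the paper). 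One small caution: the existence of big Cohen--Macaulay algebras in mixed characteristic, and even the direct summand conjecture itself, are now theorems, but since the implication in the paper runs only from Conjecture~\ref{robcj} \emph{to} the direct summand conjecture, those results do not settle the conjecture here; your proposal does not claim otherwise, but it is worth making explicit that the perfectoid machinery has not, to date, been shown to yield the quasilength lower bound you isolate.
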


The Conjecture above implies the direct summand conjecture at once ({\it cf.} \cite[Remark 4.8]{quasi-length}).


\begin{question} Let $R$ be a complete local domain of mixed characteristic, and
$z_1, \, \ldots, \, x_d$ a system of parameters.  Is it true that $(x_1, \, \ldots, \, x_k)R:_R x_{k+1}
\subseteq \bigl((x_1, \, \ldots, \, x_k)R\bigr)\ro$.  That is, does robust closure have the
colon-capturing property?  \end{question}

\begin{question} If $R \to S$ is a local homomorphism of complete local domains
and $r \in I\ro$,  where $I \subseteq R$,  is the image of $r$ in $S$ in $(IS)\ro$?
An affirmative answer gives a form of persistence for robust closure.  \end{question}

Affirmative answers to these questions would imply the following Conjecture, whose
statement does not refer to robust closure.

\begin{conj}\label{capture}
Let $(R,\fm)$ be  $d$-dimensional complete local domain of mixed characteristic $p$. Assume that $p,x_2,\dots,x_d$ is a system of parameters. Let $\overline{R} := R/pR$.   
Let $J = (x_2,\,\ldots,\,x_d)R$, and let $J_n = (x_2,\dots,x_d):_R p^n \subseteq R$.  Then for all $n > 0$, 
$J_n\overline R \subseteq (J\overline{R})^*$.
\end{conj}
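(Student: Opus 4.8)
The plan is to deduce Conjecture~\ref{capture} from affirmative answers to the two questions on robust closure raised above in this section --- the colon-capturing property and persistence along local homomorphisms of complete local domains --- together with Corollary~\ref{cor: tight and robust closures coincide in char p}, which identifies robust closure with tight closure in complete local domains of characteristic $p$. The passage to characteristic $p$ is built into the statement, since $\overline R = R/pR$ is equicharacteristic $p$; the real content is to route the colon ideal $J_n = (x_2,\dots,x_d):_R p^n$ through the robust closure of $J = (x_2,\dots,x_d)R$ inside $R$ itself, and then to transport this down to $\overline R$.

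First I would note that $p^n, x_2,\dots,x_d$ is again a system of parameters of $R$, hence so is the reordering $x_2,\dots,x_d,p^n$. Granting colon-capturing for robust closure, applying it to this system of parameters yields
$$J_n = (x_2,\dots,x_d)R :_R p^n \ \subseteq\ \bigl((x_2,\dots,x_d)R\bigr)\ro \ =\ J\ro_R.$$
Next, let $\mathfrak q_1,\dots,\mathfrak q_r$ be the minimal primes of $\overline R$ and let $\widetilde{\mathfrak q_i}$ denote the preimage of $\mathfrak q_i$ in $R$; each $R/\widetilde{\mathfrak q_i}$ is an equicharacteristic-$p$ complete local domain, being a quotient of the complete local domain $R$ by a prime. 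Granting persistence of robust closure along the surjective local maps $R \to R/\widetilde{\mathfrak q_i}$, the containment $J_n \subseteq J\ro_R$ descends to $J_n(R/\widetilde{\mathfrak q_i}) \subseteq \bigl(J(R/\widetilde{\mathfrak q_i})\bigr)\ro$, and by Corollary~\ref{cor: tight and robust closures coincide in char p} the right side equals $\bigl(J(R/\widetilde{\mathfrak q_i})\bigr)^{*}$. Since $J(R/\widetilde{\mathfrak q_i})$ and $J_n(R/\widetilde{\mathfrak q_i})$ are precisely the images of $J\overline R$ and $J_n\overline R$ in $\overline R/\mathfrak q_i$, this says that for each $u \in J_n\overline R$ and each minimal prime $\mathfrak q_i$, the image of $u$ in $\overline R/\mathfrak q_i$ lies in the tight closure of $(J\overline R)(\overline R/\mathfrak q_i)$.

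Finally I would invoke the fact that tight closure in a Noetherian ring of characteristic $p$ is detected modulo the minimal primes. For the reduced quotient this is standard; to pass from $\overline{R}\red$ back to $\overline R$ one uses that Frobenius absorbs nilpotents: if $cu^q \in (J\overline R)^{[q]} + \sqrt{(0)}$ for all $q \gg 0$ with $c \in \overline R^{\circ}$, then taking a power $q_1$ of $p$ at least the nilpotency index of $\sqrt{(0)}$ and raising to the $q_1$-th power kills the nilpotent part, so $c^{q_1}u^{q_1 q} \in (J\overline R)^{[q_1 q]}$ for $q \gg 0$ and hence $u \in (J\overline R)^{*}$. Applying this to the element $u$ produced in the previous paragraph gives $J_n\overline R \subseteq (J\overline R)^{*}$, which is the assertion of the conjecture.

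The main obstacle is concentrated in the first two steps: the colon-capturing and persistence properties invoked for robust closure are exactly the open questions stated above, and establishing either in mixed characteristic appears to require genuinely new input. A big Cohen--Macaulay $R$-algebra $B$ would handle the colon directly, since $x_2,\dots,x_d,p^n$ is a regular sequence on $B$, giving $J_nB \subseteq JB$, after which one would descend through $B/pB$, a big Cohen--Macaulay $\overline R$-algebra, using that parameter ideals satisfy $IB' \cap \overline R \subseteq I^{*}$ in characteristic $p$; but such big Cohen--Macaulay input is precisely what the robust-closure approach is meant to circumvent. Everything beyond those two inputs is routine.
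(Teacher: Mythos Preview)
Your proposal is correct as a conditional derivation from the two open questions and follows exactly the route the paper sketches in the sentence preceding the conjecture: ``The point is that $J_n$ would be in $J\ro$, and this would persist when we map to $R/pR$. But in $R/pR$, robust closure coincides with tight closure.'' Your version is in fact more careful on one point the paper glosses over: since the persistence question is posed only for local maps of complete local \emph{domains} and $\overline R = R/pR$ need not be one, you correctly factor through the quotients $R/\widetilde{\mathfrak q}_i$ by the primes minimal over $p$ and then reassemble via the standard fact that tight closure in characteristic~$p$ is detected modulo minimal primes and absorbs nilpotents --- precisely the reduction the paper itself makes when it proves the conjecture unconditionally in dimension~3.
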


The point is that $J_n$ would be in $J\ro$,  and this would persist when we map to
$R/pR$.  But in $R/pR$,  robust closure coincides with tight closure.  We prove below
that Conjecture~\ref{capture} suffices to prove the direct summand conjecture.  We also
prove that it holds in dimension at most 3.  This argument in dimension 3 needs very difficult results of Heitmann (\cite[Theorem 0.2]{HeitmannExtendedPlusClosure}).
However, which can be used to prove the direct summand conjecture in several ways.   

\begin{thm}
Conjecture \ref{capture} implies the Monomial Conjecture (or equivalently the Direct Summand Conjecture).
\end{thm}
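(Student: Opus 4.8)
The plan is to reduce the Monomial Conjecture, via its equivalence with the Direct Summand Conjecture, to exactly the situation addressed by Conjecture~\ref{capture}, and then to read off the conclusion by running a colon‑capturing argument in reverse. Since the Monomial and Direct Summand Conjectures are equivalent (Hochster), and the Direct Summand Conjecture is known in equal characteristic, it suffices to treat the mixed characteristic case of the Direct Summand Conjecture. By the standard structure‑theoretic reductions this amounts to showing that every unramified complete regular local ring $A = V[[x_2,\ldots,x_d]]$, where $V$ is a complete discrete valuation ring with uniformizer $p$, is a direct summand of each module‑finite extension domain $R$. Since $A$ is Gorenstein with $H^i_{\fm_A}(A) = 0$ for $i < d$ and with the socle of $H^d_{\fm_A}(A)$ generated by the paraclass $[1/(p x_2 \cdots x_d)]$, the splitting of $A \hookrightarrow R$ is equivalent to the nonvanishing of the image of that paraclass in $H^d_{\fm}(R) = \varinjlim_t R/(p^{t+1}, x_2^{t+1},\ldots,x_d^{t+1})R$; that is, to the assertion that $(p x_2 \cdots x_d)^t \notin (p^{t+1}, x_2^{t+1},\ldots,x_d^{t+1})R$ for all $t \geq 1$. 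This is the statement I would prove, for an arbitrary complete local domain $R$ of mixed characteristic $p$ in which $p, x_2,\ldots,x_d$ is a system of parameters.

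So suppose, for contradiction, that $(p x_2 \cdots x_d)^t = p^{t+1} a_0 + \sum_{i=2}^d x_i^{t+1} a_i$ for some $t \geq 1$ and $a_i \in R$. Rearranging gives $p^t\bigl((x_2 \cdots x_d)^t - p a_0\bigr) \in (x_2^{t+1},\ldots,x_d^{t+1})R$, hence
\[(x_2 \cdots x_d)^t - p a_0 \in (x_2^{t+1},\ldots,x_d^{t+1})R :_R p^t .\]
The sequence $p, x_2^{t+1},\ldots,x_d^{t+1}$ is again a system of parameters of $R$, so Conjecture~\ref{capture} applies to it with $J = (x_2^{t+1},\ldots,x_d^{t+1})R$ and $n = t$, yielding $J_t \overline R \subseteq (J\overline R)^*$ in $\overline R := R/pR$. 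Reducing the displayed membership modulo $p$ kills $p a_0$, so the image of $(x_2 \cdots x_d)^t$ in $\overline R$ lies in $(J\overline R)^* = \bigl((\overline{x_2}^{\,t+1},\ldots,\overline{x_d}^{\,t+1})\overline R\bigr)^*$.

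To close the argument I would extract a contradiction inside a complete local domain of characteristic $p$. The ring $\overline R$ has characteristic $p$ and dimension $d-1$, and $\overline{x_2},\ldots,\overline{x_d}$ is a system of parameters of $\overline R$. Picking a minimal prime $\fq$ of $\overline R$ with $\dim(\overline R/\fq) = d-1$, the images of $\overline{x_2},\ldots,\overline{x_d}$ form a system of parameters of the complete local domain $\overline R/\fq$, and persistence of tight closure along $\overline R \to \overline R/\fq$ (the target has a completely stable test element) shows that $(\overline{x_2}\cdots\overline{x_d})^t$ lies in the tight closure of $(\overline{x_2}^{\,t+1},\ldots,\overline{x_d}^{\,t+1})$ in $\overline R/\fq$. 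But this violates the monomial property of tight closure in characteristic $p$: in a complete local domain of characteristic $p$ a system of parameters is a regular sequence in a balanced big Cohen--Macaulay algebra $B$, and $I^* \subseteq IB$ for such $B$ (Hochster--Huneke), so no relation of the form $(y_1\cdots y_e)^t \in (y_1^{t+1},\ldots,y_e^{t+1})^*$ can hold. This contradiction completes the proof.

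The only delicate part of this program is the first paragraph: verifying carefully that the Monomial/Direct Summand Conjecture genuinely reduces to the displayed statement about the specific system of parameters $p, x_2,\ldots,x_d$ — this is where the equivalence of the homological conjectures and the Cohen structure theory are invoked — together with citing the correct forms of persistence of tight closure and of the characteristic‑$p$ monomial statement used at the end. Everything after that reduction is the three‑line computation above, which is nothing more than the colon‑capturing supplied by Conjecture~\ref{capture}, read in the direction needed to destroy a putative monomial relation.
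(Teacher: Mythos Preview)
Your proof is correct and follows essentially the same arc as the paper's: reduce to a complete local domain of mixed characteristic with $p$ as the first parameter, assume the monomial relation $(px_2\cdots x_d)^t\in(p^{t+1},x_2^{t+1},\dots,x_d^{t+1})$, rewrite it as a colon membership $(x_2\cdots x_d)^t - pa_0\in (x_2^{t+1},\dots,x_d^{t+1}):_R p^t$, and invoke Conjecture~\ref{capture} (applied to the system of parameters $p,x_2^{t+1},\dots,x_d^{t+1}$) to force $(\bar x_2\cdots\bar x_d)^t\in\bigl((\bar x_2^{t+1},\dots,\bar x_d^{t+1})\bar R\bigr)^*$.

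The only genuine divergence is in how the final contradiction is extracted in characteristic $p$. The paper stays in $\bar R$, passes to the generic forcing algebra $S$ for the tight closure relation, uses $I^*=I^{\bigstar}$ to get $H^{d-1}_{(x_2,\dots,x_d)}(S)\neq 0$, and then observes that the monomial relation in $S$ plus Frobenius kills every generator $[1/(x_2\cdots x_d)^{p^e}]$ of that local cohomology, forcing it to vanish. You instead pass to a minimal prime $\fq$ of $\bar R$ of maximal dimension, use persistence of tight closure to land in a complete local domain of characteristic $p$, and then kill the relation via a big Cohen--Macaulay algebra. Your route is a bit more careful about $\bar R$ possibly failing to be a domain, which the paper's invocation of \cite[Theorem 8.6(b)]{solid-closure} glosses over. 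One small correction to your citation: the inclusion $I^*\subseteq IB$ is \emph{not} true for an arbitrary big Cohen--Macaulay algebra $B$ (take $B=R$ when $R$ is Cohen--Macaulay but not F-rational); what you need is $B=R^+$, which is big Cohen--Macaulay by Hochster--Huneke and satisfies $I^*\subseteq IR^+$ for parameter ideals by Smith's theorem. With that adjustment the argument is complete.
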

\begin{proof}
Without loss of generality, we may assume that $R$ is a complete local domain of mixed characteristic. Assume the Monomial Conjecture does not hold in $R$, {\it i.e.} there is a system of parameters $p=x_1,\dots,x_d$ of $R$ such that $(x_1,\cdots x_d)^{t-1}\in (x^t_1,\dots,x^t_d)$ for some integer $t\geq 2$ (we may choose $x_1=p$ by \cite[page 539]{HochsterCanonicalElement}). This will imply that there are elements $r_1,\dots,r_d$ such that \[x^{t-1}_1((x_2\cdots x_d)^{t-1}-r_1x_1)=\sum^d_{i=2}r_ix^t_i.\] 
Hence $(\bar{x}_2\cdots \bar{x}_d)^{t-1}\in (\bar{x}_2,\dots,\bar{x}_d)^*$ in $\bar{R}:=R/x_1R=R/pR$, where $\bar{x}_i$ denotes the image of $x_i$ in $\bar{R}$. Then by \cite[Theorem 8.6(b)]{solid-closure}, $H^{d-1}_{(x_2,\dots,x_d)}(S)\neq 0$ where $S=\frac{\bar{R}[Z_2,\dots,Z_d]}{((\bar{x}_2\cdots \bar{x}_d)^{t-1}-\sum^d_{i=2}Z_i\bar{x}^t_i)}$. However, since $\bar{R}$ and hence $S$ contain a field of characteristic $p$, we have $[\frac{1}{(x_2\cdots x_d)^{p^e}}]=[\frac{1}{(x_2\cdots x_d)}]^{p^e}=0\in H^{d-1}_{(x_2,\dots,x_d)}(S)$; consequently $H^{d-1}_{(x_2,\dots,x_d)}(S)=0$, a contradiction.
\end{proof}

\begin{thm}
Conjecture \ref{capture} holds in dimension 3.
\end{thm}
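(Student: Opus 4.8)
The plan is to set up the situation as in the proof of the preceding theorem, but now in the specific case $d = 3$, and then invoke Heitmann's deep result on the vanishing of the extended plus closure (or, equivalently, the partial result toward the direct summand conjecture in dimension 3). Let $(R,\fm)$ be a $3$-dimensional complete local domain of mixed characteristic $p$, let $p = x_1, x_2, x_3$ be a system of parameters, let $\overline R = R/pR$, let $J = (x_2, x_3)R$, and let $J_n = (x_2, x_3) :_R p^n$. We must show $J_n \overline R \subseteq (J\overline R)^*$. Fix $w \in J_n$, so $p^n w = r_2 x_2 + r_3 x_3$ for some $r_2, r_3 \in R$. The goal is to show that the image $\overline w$ of $w$ in $\overline R$ lies in the tight closure of $(\overline x_2, \overline x_3)\overline R$, where $\overline R$ is an equicharacteristic-$p$ complete local domain (actually of dimension $2$) — or rather of its normalization, since tight closure is insensitive to passing to the normalization in the relevant sense.

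The key step is to reinterpret $w \in J_n$ as an instance of the extended plus closure (or Heitmann's "full rank-one closure") of $(x_2, x_3)$ in $R$. Heitmann's theorem (\cite[Theorem 0.2]{HeitmannExtendedPlusClosure}) — which is precisely the result flagged in the excerpt as "very difficult results of Heitmann" — gives, for a mixed characteristic excellent local domain of dimension $3$, a colon-capturing statement of the form: if $p^n w \in (x_2, x_3)R$ then, after adjoining suitable elements $p^{1/m}$ to a module-finite extension $R^+$-wise, one can write $w$ up to arbitrarily small "$p$-power error" as an element of $(x_2, x_3)$ in a big module-finite (or integral, with controlled denominators) extension domain $R'$ of $R$. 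Concretely, for every $\varepsilon > 0$ there is a module-finite extension domain $R_\varepsilon$ of $R$ and an element $p^{\varepsilon}$-type multiplier such that $p^{\varepsilon} w \in (x_2, x_3)R_\varepsilon$. I would then reduce mod $p$: the extension $R_\varepsilon / pR_\varepsilon$ maps to a module-finite extension of $\overline R_{\mathrm{red}}$, hence (passing to a minimal prime and normalizing) to a module-finite extension of an equicharacteristic complete local domain $D$ of $\overline R$, and the relation $p^{\varepsilon} w \in (x_2,x_3)$ becomes, because $p = 0$ there, a relation witnessing that $\overline w$ is (almost) in $(x_2, x_3)$ in a module-finite extension of $D$; since module-finite extensions of excellent equicharacteristic local domains are solid (Remark~\ref{solidrem}) and plus closure is contained in tight closure in characteristic $p$, one concludes $\overline w \in (J D)^*$, and this holds for every complete local domain $D$ of $\overline R$, giving $\overline w \in (J\overline R)^*$ by the definition of tight closure via complete local domains.

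The main obstacle is the careful bookkeeping in translating Heitmann's "almost vanishing / $\varepsilon$-error" formulation into an honest tight-closure membership after reduction mod $p$. In Heitmann's setting the relation $p^{\varepsilon} w \in (x_2, x_3)$ holds only up to fractional powers of $p$, and these fractional powers become $0$ when we pass to $R/pR$ — so naively one would only get $0 \cdot w = 0$, which is vacuous. The fix, and the technical heart of the argument, is to run the $\varepsilon$-error argument at a *fixed positive level* and extract, by a limiting/compactness argument across the Frobenius powers $q = p^e$ of the equicharacteristic ring $D$, a single test element $c$ (a completely stable test element exists since $D$ is a complete local domain) and relations $c \overline w^{\,q} \in (\overline x_2^{\,q}, \overline x_3^{\,q})$ for all $q \gg 0$; this is exactly the content of $\overline w \in (J D)^*$. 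In other words, Heitmann's result in dimension $3$ is strong enough that the $p$-power denominators it introduces, once killed by reduction mod $p$, still leave behind a genuine characteristic-$p$ tight closure relation — but verifying this requires unwinding Heitmann's construction (his use of the "extended plus closure" $I^{epf}$ and the fact, \cite[Theorem 0.2]{HeitmannExtendedPlusClosure}, that it captures colons in mixed characteristic dimension $3$) with some care. A secondary, more routine obstacle is checking that passing from $\overline R$ to its minimal primes, then to module-finite extensions, then to normalizations, respects the inclusion into tight closure at each stage — this is handled by the standard behavior of tight closure under these operations together with Remark~\ref{remforce} and the results on complete local domains collected in \S\ref{robcl}.
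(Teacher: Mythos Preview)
Your overall strategy---use Heitmann's dimension-3 colon-capturing result, reduce modulo a minimal prime of $p$, and land in tight closure in characteristic $p$---is exactly the paper's. You also correctly identify the obstacle: if Heitmann's theorem only gave $p^{\varepsilon}w \in (x_2,x_3)R^+$, then reducing modulo any prime containing $p$ would collapse this to $0=0$, since such a prime must contain $p^{1/q}$ as well. Your proposed ``limiting/compactness'' fix cannot rescue this: every single relation becomes vacuous upon reduction, so there is nothing nontrivial left to compactify into a test-element relation $c\,\overline{w}^{\,q}\in(\overline{x}_2^{\,q},\overline{x}_3^{\,q})$.

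The gap is that you have the wrong form of Heitmann's theorem. What \cite[Theorem 0.2]{HeitmannExtendedPlusClosure} actually gives is: for \emph{every} nonzero $c\in\fm$ (not just $c=p$) and every $q=p^e$, one has $c^{1/q}u\in(x_2,x_3)R^+$. Now fix a minimal prime $\fp$ of $pR$ and choose $c\notin\fp$. Reducing modulo a prime of $R^+$ lying over $\fp$ yields $\bar c^{1/q}\bar u\in(\bar x_2,\bar x_3)\overline{R}^+$ with $\bar c\neq 0$ in the characteristic-$p$ domain $\overline{R}=R/\fp$, and this is no longer vacuous. The second ingredient you are missing is \cite[Theorem~3.1]{HochsterHunekeElementsSmallOrders}, which says precisely that in a characteristic-$p$ excellent local domain, the condition $\bar c^{1/q}\bar u\in I\overline{R}^+$ for all $q$ (with a fixed nonzero $\bar c$) forces $\bar u\in I^*$. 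Those two citations replace your compactness paragraph entirely and make the proof immediate; no further unwinding of Heitmann's construction is needed.
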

 \begin{proof}
 Let $(R,\fm)$ be a 3-dimensional complete local domain of mixed characteristic. Let $p,x,y$ be a system of parameters of $R$ and $J_n=(x,y):_Rp^n$. it suffices to show that $J_n\overline{R}\subseteq (J\overline{R})^*$, where $\overline{R}=R/\fp$, for each minimal prime ideal $\fp$ of $p$. Assume that $u\in J_n$ for some $n$, {\it i.e.} $up^n\in (x,y)$ for some $n$, then by \cite[Theorem 0.2]{HeitmannExtendedPlusClosure}, for any nonzero element $c\in \fm$ and every $q=p^e$ one has $c^{1/q}u\in (x,y)R^+$. This holds true after modding out $\fp$, and therefore we have in $\overline{R}:=R/\fp$ that $\bar{c}^{1/q}\bar{u}\in (\bar{x},\bar{y})\overline{R}^+$. Then it follows from \cite[Theorem 3.1]{HochsterHunekeElementsSmallOrders} that $\bar{u}\in (\bar{x},\bar{y})^*=(J\overline{R})^*$. The finishes the proof.
 \end{proof}
 
\begin{question} Is it always true that 
$\underline{\fh}^d_{\underline{x}}(R)=\fh^d_{\underline{x}}(R)$? 
This is true in positive characteristic as shown in \cite[Theorem 3.9]{quasi-length}. 
We do not even know whether the weaker statement that if 
$\underline{\fh}^d_{\underline{x}}(R)=0$ then $\fh^d_{\underline{x}}(R)=0$ is true.\end{question}

\begin{question} Let $R$ be regular  of characteristic $p>0$, let $I$ be a prime ideal with 
$x_1,\,\ldots,\, x_n$ a minimal set of generators. If $h<n$ , is it true that $\fh^n_{\underline{x}}(R)=0$?
\end{question}

\begin{question}
Assume that 
\[S_1=R[Y_1,\dots,Y_n]/(u-\sum^n_{l=1}Y_lg_l)\ {\rm and}\ S_2=R[Z_1,\dots,Z_n]/(v-\sum^n_{l=1}Z_lg_l)\]
satisfy $\fh^d_{\underline{x}}(S_1)=\fh^d_{\underline{x}}(S_2)=1$, then do we have
\[\fh^d_{\underline{x}}(S_1\otimes_RS_2)=1?\]
\end{question}


\bibliographystyle{skalpha}
\bibliography{CommonBib}

\end{document}